\documentclass{article}

\usepackage[a4paper, total={6in, 8in}]{geometry}
\usepackage{emptypage}
\usepackage[utf8]{inputenc}
\usepackage[T1]{fontenc}
\usepackage{lmodern}
\usepackage[english]{babel}
\usepackage{amsmath, amssymb}
\usepackage{amsthm}
\usepackage{longtable}
\usepackage{placeins}
\usepackage[font=small,labelfont=bf]{caption}
\usepackage{comment}

\usepackage{enumerate}
\usepackage{enumitem}
\usepackage{xcolor}
\RequirePackage[colorlinks,allcolors=blue]{hyperref}
\RequirePackage{graphicx}
\usepackage[nottoc]{tocbibind}

\usepackage{titlesec}
\titleformat{\section}{\large\bfseries}{\thesection}{1em}{}

\usepackage[round]{natbib} 

\newtheorem{theorem}{Theorem}
\newtheorem{lemma}{Lemma}
\newtheorem{corollary}{Corollary}

\newtheorem{example}{Example}

\theoremstyle{definition}

\newtheorem*{remark*}{Remark}

\numberwithin{equation}{section}

\newcommand{\eps}{\varepsilon}

\newcommand{\E}{\mathbb{E}}
\newcommand{\N}{\mathbb{N}}
\newcommand{\R}{\mathbb{R}}
\newcommand{\B}{\mathbb{B}}

\newcommand{\G}{\mathbb{G}}
\renewcommand{\P}{\mathbb{P}}

\newcommand{\Rc}{\mathcal{R}}
\newcommand{\Uc}{\mathcal{U}}
\newcommand{\Dc}{\mathcal{D}}
\newcommand{\Oc}{\mathcal{O}}
\newcommand{\weak}{\rightsquigarrow}

\newcommand{\ir}[1]{#1^\uparrow}		

\title{Limiting properties of monotone rearrangements of estimators when the truth is flat}
\date{\today}
\author{Holger Dette\footnote{Faculty of Mathematics, Ruhr-Universität Bochum, \href{mailto:holger.dette@rub.de}{holger.dette@rub.de}} \and Marius Kroll\footnote{Faculty of Mathematics, Ruhr-Universität Bochum, \href{mailto:marius.kroll@rub.de}{marius.kroll@rub.de}} \and Stanislav Volgushev\footnote{Department of Statistical Sciences, University of Toronto, \href{mailto:stanislav.volgushev@utoronto.ca}{stanislav.volgushev@utoronto.ca}}}
\begin{document}
\maketitle

\begin{abstract}
  Monotone rearrangements provide a simple way to enforce shape constraints of an estimator, but existing distributional theory does not cover flat regions, where the target induces no local ordering. We study rearranged estimators in two canonical flat settings. First, for a histogram estimator of the uniform density, we establish functional weak convergence of its non-decreasing rearrangement at the parametric rate on compact subsets of the interval $(0,1)$ after an additional deterministic centering. This result is strikingly different from what is known for strictly monotone densities. Second, we consider two rearranged estimators of rearranged copulas under independence, based on empirical-copula increments and on a checkerboard approximation. After appropriate centering and rescaling, both estimators converge weakly on $[0,1]^2$ to an integrated Gaussian process which was not known before. We further use these results to  prove asymptotic normality for a broad class of rearranged copula-based dependence measures, which were recently discussed in \cite{Strothmann.2022}.
\end{abstract}

{\it Keywords:} Monotone rearrangements; flat regions; nonparametric density estimation;  copulas; functional weak convergence   
\smallskip

{\it AMS Subject classification:} Primary 62G20; secondary 60F17, 62G07, 62H05, 62H20
\section{Introduction}
\label{sec1}

Shape restrictions such as monotonicity arise naturally in many statistical problems, including the estimation of densities, regression functions, and conditional quantiles. A simple way of enforcing monotonicity of an estimator is its monotone rearrangement, which  originates in classical majorization theory; see, for example, \cite{HardyLittlewoodPolya1952}. The  appeal of monotone rearrangements in statistics rests in particular on a contraction property: if the target is monotone, rearranging an estimator cannot increase its distance from the target for a broad class of loss functions including $L^p$-metrics \citep{Lorentz1953}. 
Rearrangement methods have been applied to monotone density and regression estimation \citep{Fougeres1997,detneupil2006,ChernozhukovFernandezValGalichon2009,CollingVanKeilegom2019,Wied2024}, to the estimation of the effective dose \citep{DetteNeumeyerPilz2005}, to the construction of non-crossing  quantile curves \citep{DetteVolgushev2008,ChernozhukovFernandezValGalichon2010}, to the anaylsis of conditional quantile processes \citep{QuYoon2015,BelloniChernozhukovChetverikovFernandezVal2019}, to change point detection  \citep{dettewu2019}  and to simultaneous inference in time series
\citep{Luo20102025} among others. A general asymptotic framework for monotone rearrangements of density and regression estimators was developed by \cite{AnevskiFougeres2019}. They established  stability and consistency properties of the rearrangement operator and derived pointwise limiting distributions for rearranged estimators under fairly general stochastic assumptions. The distributional results in this work, however, do not cover points belonging to flat regions of the density or regression function.

The asymptotic behavior of shape-constrained estimators on flat regions has been investigated in several related settings. In the discrete case, \cite{JankowskiWellner2009} studied the monotone rearrangement of the empirical estimator for a decreasing probability mass function. They showed, in particular, that on a constant block the limiting distribution of the rearranged estimator is obtained by rearranging the corresponding Gaussian limit vector. Estimation when the locations of flat regions are known was considered by \cite{AnevskiPastukhov2016}.  Flat regions have also been studied in monotone density estimation for the  Grenander estimator. For example, \cite{GroeneboomPyke1983} investigated the asymptotic distribution of the $L^2$-distance between this estimator and the uniform density, while \cite{CarolanDykstra1999} studied the pointwise asymptotic distribution of the Grenander estimator at points where the underlying density is locally constant. These estimators, however, are based on concave-majorant or projection operations and are therefore fundamentally different from estimators obtained by monotone rearrangements.

Although monotone rearrangements are now a classical tool in statistics, no distributional results on rearranged estimators when the estimation target has flat pieces are available to date. In this paper we deepen our understanding of rearrangement based estimators by investigating the distributional properties of rearranged estimators of a flat target for two important estimation problems. We first consider the non-decreasing  rearrangement of a histogram estimator for the uniform density on the interval $[0,1]$.  Since the ``true''  density is constant, the rearrangement is essentially determined by the order statistics of the stochastic fluctuations of the histogram estimator. This  leads to an additional centering term which cannot be neglected after rescaling.   If  the rearranged estimator is centered by subtracting this term, it converges as a process at the parametric $n^{-1/2}$ rate on every compact interval  of $(0,1)$, and this rate is independent of the number of histogram cells (subject to  growth conditions). We also show that the process convergence cannot be extended to the boundary and derive a Gumbel limit for the supremum of the rearranged estimator.

Our second application concerns estimates of rearranged copulas of copula-based dependence measures, which were recently introduced by \cite{Strothmann.2022}. Due to the complex nature of that estimator, no distributional limit theory is available for it to date.
 For a copula $C$ of a two-dimensional vector $(X,Y)$, these authors constructed a non-decreasing rearrangement of the function $u\mapsto \partial_1 C(u,v)$ for each fixed $v$ and defined the rearranged copula $C^\uparrow$ by integrating the resulting monotone function with respect to $u$. Given a copula-based dependence measure $\mu$, the associated rearranged measure is then defined as $R_\mu(X,Y)=\R_\mu(C):=\mu(C^\uparrow)$. \cite{Strothmann.2022} showed that, for many dependence measures $\mu$ the rearranged dependence  has very desirable properties, in particular
\begin{itemize}
    \item[(1)]  $0\leq R_\mu(X,Y)\leq 1$
 \item[(2)]  $R_\mu(X,Y)=0$ if and only if 
$
X\ \text{and}\ Y\ \text{are independent}$
 \item[(3)] 
$R_\mu(X,Y)=1$ if and only if  $
Y=f(X)\ \text{almost surely for some measurable function }$
\end{itemize}
This construction allows, for example, the construction of a rearranged Kendall's $\tau$ or Spearman's $\rho$ with the properties (1) - (3). Under independence, we have  $ C(u,v)=uv$, and $\partial_1C(u,v)=v,$
and hence, for every fixed $v$, the function $u\mapsto\partial_1C(u,v)$ is constant. Thus, independence corresponds precisely to a flat target in the coordinate with respect to which the rearrangement is performed. 

We investigate the distributional properties of two estimators of $C^\uparrow$ under independence: one based on increments of the empirical copula and one based on the partial derivative of a checkerboard approximation.
 For each fixed value of the second coordinate, these increments are rearranged and subsequently integrated. The second estimator applies the same construction to the partial derivative of a checkerboard approximation of the empirical copula. Under independence, we establish a functional central limit theorem for both  rearranged copula estimators. More specifically,  both estimators converges weakly at the parametric rate to an integrated centered Gaussian process in  $\ell^\infty([0,1]^2)$ (after appropriate centering and rescaling).  Finally, we use a functional delta method to derive asymptotic distributions for the resulting estimators of the  rearranged dependence measures. In particular, asymptotic normality follows whenever the relevant derivative of the copula functional is linear and continuous.

The remainder of the paper is organized as follows. In Section~\ref{sec2}, we study the rearranged histogram estimator of the uniform density and provide a proof of the main result. Section~\ref{sec3} introduces the two estimators of the rearranged copula and derives their limiting distributions under independence, together with the resulting asymptotic theory for rearranged dependence measures. The main key proof steps are presented in Section~\ref{sec:proof_outline}. A number of intermediate technical results that appear in those proofs are proved in Appendix~\ref{sec:proof-cop}.

\section{Monotone rearrangements}
\label{sec:monotone_rearrangements}
Let us briefly recall the definition and some relevant properties of the monotone rearrangement of a real-valued function. In our presentation we follow \citep[Chapter 6]{leoni:2009}, but slightly extend their definition to possibly negative-valued functions. We define for a Borel measurable function $f : [0,1] \to \mathbb{R}$ its decreasing (or non-increasing) rearrangement $\mathcal{R}f : [0,1] \to \bar{\mathbb{R}} = \mathbb{R} \cup \{\pm \infty\}$ by 
\begin{equation}
    \label{eq:decreasing_rearrangement_definition}
\mathcal{R}f(t) = \inf\left\{u \in \mathbb{R} ~|~ \lambda(f > u) \leq t \right\},
\end{equation}
where we define $\inf \emptyset = + \infty$ and $\lambda$ denotes the Lebesgue measure on $[0,1]$. It is the generalised inverse of the decreasing tail probability function $u \mapsto \lambda(f > u)$ \citep[cf.\@ Theorem 1.7 and Remark 1.10 in][]{leoni:2009}. $\mathcal{R}f$ is right-continuous and decreasing (hence Borel measurable). It may take the value $+\infty$ only at $t = 0$ and $-\infty$ only at $t = 1$. If $f$ is bounded, then $\mathcal{R}f$ is bounded, too. For classical results about the decreasing rearrangement such as the Hardy-Littlewood inequality, we refer to \cite{leoni:2009}, Chapter 6. One property which we will use repeatedly is that the rearrangement operator restricted to the space of bounded functions is a contraction with respect to the supremum norm; i.e.\@ $\|\mathcal{R}f - \mathcal{R}g\|_\infty \leq \|f-g\|_\infty$ for any two bounded Borel measurable functions $f,g : [0,1] \to \mathbb{R}$. This is essentially Theorem 1 in \cite{AnevskiFougeres2019}, where it is stated for non-negative bounded functions $f$ and $g$. Our slightly more general statement follows by noting that if $f,g : [0,1] \to [a,b]$ are bounded and possibly negative-valued, then $\mathcal{R}f = \mathcal{R}(f-a) + a$ and $\mathcal{R}g = \mathcal{R}(g-a)+a$, and so
$$
\|\mathcal{R}f - \mathcal{R}g\|_\infty = \|\mathcal{R}(f-a) - \mathcal{R}(g-a)\|_\infty \leq \|(f-a) - (g-a)\|_\infty = \|f-g\|_\infty
$$
by the cited result for non-negative functions.

Sometimes it is desirable to construct an increasing (or non-decreasing) rearrangement of $f$ instead. For functions defined on the unit interval, this can easily be achieved by considering $t \mapsto \mathcal{R}f(1-t)$ instead of $\mathcal{R}f$ directly. If we view $f$ as a random variable defined on the Borel space $[0,1]$ equipped with the Lebesgue measure, and denote the distribution function of $f$ by $F$, then this definition agrees with the generalised inverse $F^{-1}$. More precisely, we have the identity
    $$
    F^{-1}(t) = \inf\left\{u \in \mathbb{R} ~|~ \lambda(f \leq u) \geq t \right\} = \inf\left\{u \in \mathbb{R} ~|~ \lambda(f > u) \leq 1-t \right\} = \mathcal{R}f(1-t)
    $$
for any $t \in (0,1)$. The first equality is simply the definition of the generalised inverse, and we have excluded the points $t = 0$ and $t = 1$ simply because the quantile function is often considered not defined in those points. If we extend the domain of $F^{-1}$ to $[0,1]$ by allowing $F^{-1}(0) = -\infty$ and $F^{-1}(1) = +\infty$, then the identity $F^{-1}(t) = \mathcal{R}f(1-t)$ holds for all $t \in [0,1]$.

\section{Rearranged density estimators}
\label{sec2}
In this section, we provide a discussion of rearranged density estimators. In order to give a clean exposition of the main findings without getting into technicalities that would obscure the main message, we focus on histogram based estimators in the case of the uniform density. Specifically, given the sample $U_1,\dots, U_n \sim$ i.i.d. $\Uc[0,1]$, fix $N<n$ and consider the histogram estimator 
\[
\hat f_{n,N}(x) := N\sum_{j=1}^{N} \textbf{1}\{x \in A_{j,N}\} (\hat F_n(j/N) - \hat F_n((j-1)/N))
\]
where $\hat F_n(u)={1\over n}\sum_{i=1}^n \textbf{1}\{ U_i \le u\} $ denotes the empirical CDF of $U_1,\dots,U_n$ and $A_{1,N} = [0,1/N]$, $A_{j,N} = ((j-1)/N,j/N], j=2,\dots,N$. Let $\mathcal{R} \hat f_{n,N}$ denote the decreasing rearrangement of the estimator $\hat f_{n,N}$ as defined in Section~\ref{sec:monotone_rearrangements}. Let $\ell^{\infty} (D)$ denote the space of bounded functions (defined on the domain $D$) and let the symbol $\weak$ denote weak convergence in the sense of Hoffman-J\o rgensen \citep[see][]{vandervaart_wellner:weak_convergence}. Our main result is as follows.

\begin{theorem}\label{thm:dens}
Assume that $U_1,\dots,U_n$ are i.i.d. $\Uc[0,1]$ and that $1 \ll N = o(\sqrt{n}/\log n)$. Then, for any fixed $\eps \in (0,1/2)$, 
\begin{align}\label{eq:procreardens}
\Big \{ \sqrt{n}\Big (\Rc \hat f_{n,N}(u) - 1 - \frac{N}{n} \Phi^{-1}(1-u)\Big )\Big  \}_{u \in [\eps,1-\eps]} \weak \G \quad \mbox{in } \ell^{\infty}[\eps,1-\eps]
\end{align}
where the limiting process is given by  $\G = \G_1 - Y$ with $\G_1$ a centered Gaussian process of the form $\G_1(u) = \B(1-u)/\varphi\circ\Phi^{-1}(1-u)$ for a standard Brownian bridge $\B$ and $Y \sim \mathcal{N}(0,1)$ such that $\G_1,Y$ are jointly normal with
\[
\E\big[\B(u)Y\big] = \E[Y\textbf{1}\{Y \le \Phi^{-1}(u)\}].
\]
\end{theorem}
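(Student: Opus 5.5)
Since $\hat f_{n,N}$ is a step function taking the values $\hat f_j := N(\hat F_n(j/N)-\hat F_n((j-1)/N))$, $j=1,\dots,N$, each on a set of Lebesgue measure $1/N$, its decreasing rearrangement is the step function $\Rc\hat f_{n,N}(u) = \hat f_{(N-\lceil Nu\rceil)}$ (suitably indexed), i.e.\ the empirical quantile function of the $N$ cell values evaluated at $1-u$. Write $\hat f_j = 1 + \sqrt{N/n}\,Z_j$ with $Z_j = \sqrt{nN}(\hat F_n(j/N)-\hat F_n((j-1)/N)-1/N)$. Then
\[
\sqrt{n}\big(\Rc\hat f_{n,N}(u)-1\big) = \sqrt{N}\,\hat Q_N(1-u),
\]
where $\hat Q_N$ is the empirical quantile function of $Z_1,\dots,Z_N$. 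The claim therefore reduces to a quantile-process statement for the triangular array $Z_j$: namely that $\sqrt N(\hat Q_N(t) - \Phi^{-1}(t)\cdot\sqrt{N}\cdot N/n\cdot\sqrt{n/N}\,\ldots)$ behaves as stated. Note that $\sqrt n\cdot \frac Nn\Phi^{-1} = \sqrt{N}\cdot\sqrt{N/n}\,\Phi^{-1}$, so after the algebra the centering is exactly $\sqrt N \Phi^{-1}(1-u)$ on the $Z$-scale. I will prove
\[
\sqrt N\big(\hat Q_N(t)-\Phi^{-1}(t)\big)\weak \B(t)/\varphi(\Phi^{-1}(t)) - Y \quad\text{in }\ell^\infty[\eps,1-\eps].
\]

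Step 1 is Poissonization/decoupling. The vector $(nN)^{-1/2}\ldots$ of cell counts is multinomial$(n;1/N,\dots,1/N)$. The only dependence is the constraint $\sum_j Z_j = 0$. I represent the counts as independent Poisson$(n/N)$ variables $P_j$ conditioned on $\sum P_j = n$. Equivalently, I write $Z_j = W_j - \bar W$ up to negligible error, where the $W_j$ are i.i.d.\ standardized Poisson variables and $\sqrt N\,\bar W \to Y$ with $Y\sim\mathcal N(0,1)$. The empirical quantile of $Z_j$ is then the empirical quantile of $W_j$ shifted by $-\bar W$, which explains the term $-Y$. This is cleanest via the classical fact that multinomial counts equal Poisson counts conditioned on their sum. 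An alternative is to use the exact process: take $n$ uniforms, define $W_j$ via a Poisson number $\tilde n\sim$Poi$(n)$ of points, and control the $|\tilde n-n|=O_P(\sqrt n)$ discrepancy. That discrepancy changes each cell count by $O_P(\sqrt n/N)$ spread over $N$ cells, at most $O_P(\sqrt{\log N})$ per cell, and it must be shown to be uniformly $o_P(N^{-1/2})$ on the $Z$-scale after the mean correction.

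Step 2 is a uniform CLT for the empirical process of the triangular i.i.d.\ array $W_{j}$, $j\le N$, with distribution $G_{N}$ (standardized Poisson$(n/N)$). The process $\sqrt N(\hat G_N - G_N)$ converges jointly with $\sqrt N\bar W$ to $(\B\circ\Phi, Y)$ on compacts. The covariance $\mathrm{Cov}(\mathbf 1\{W\le x\},W)\to \E[Y\mathbf 1\{Y\le x\}]$ matches the stated one. I apply Lindeberg–Feller with VC-class tightness for triangular arrays, using bracketing with $G_N\to\Phi$ uniformly.

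Step 3 is inversion. A Vervaat/Bahadur argument for triangular arrays gives $\sqrt N(\hat Q^W_N(t)-G_N^{-1}(t)) \approx -\sqrt N(\hat G_N-G_N)(G_N^{-1}(t))/\varphi(\Phi^{-1}(t))$. The lattice nature of the Poisson distribution is the obstacle: $G_N$ has jumps of size $\asymp (N/n)^{1/2}$, and a local Edgeworth expansion gives $G_N^{-1}(t) = \Phi^{-1}(t)+O(\sqrt{N/n})$. Multiplying by $\sqrt N$ gives $N/\sqrt n = o(1/\log n)$, which is exactly where the growth condition $N=o(\sqrt n/\log n)$ enters. The $\log n$ also absorbs the lattice error in the Bahadur remainder and the $\sqrt{\log N}$ from Step 1. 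I expect this step, obtaining uniform-in-$t$ control of the quantile inversion for a discrete, $N$-dependent distribution, to be the main difficulty. I would attack it by comparing $W_j$ to a continuous variable via a uniform jitter of lattice width, which changes quantiles by $O(\sqrt{N/n})$ and hence negligibly.

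Finally, substituting $t=1-u$ and using $\B(1-\cdot)\overset{d}{=}$ the same bridge (signs absorbed by symmetry of the Gaussian pair $(\B,Y)\mapsto(-\B,-Y)$, which preserves the covariance), I obtain the stated limit $\G_1-Y$.
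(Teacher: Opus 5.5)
Your argument is correct in substance but takes a genuinely different route from the paper. The paper never Poissonizes and never meets a lattice law. The Hungarian construction couples the uniform empirical process with a Brownian bridge at rate $\log n/\sqrt n$. The sup-norm contraction of $\Rc$ then replaces the cell values by $1+\frac{N}{\sqrt n}(\Delta_{k,N}-\bar\Delta_N)$, with $\Delta_{k,N}$ i.i.d.\ $\mathcal N(0,1/N)$, at a scaled cost $O_\P(N\log n/\sqrt n)$; this is the only place the growth condition is used. What remains is the quantile process of $N$ i.i.d.\ standard normals minus their mean. That law does not depend on $n$, so the classical Bahadur--Kiefer theorem and a joint CLT finish the proof.

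Your Poissonization produces the shift $-\bar W$ (hence $-Y$) by the same mechanism as the bridge's $-\bar\Delta_N$, and it avoids strong approximation. The price is a triangular array of $n$-dependent lattice laws. In exchange, it splits the growth condition more finely. Use the coupling through $\tilde n\sim\mathrm{Poi}(n)$; the conditioning identity alone gives no pathwise bound. After removing the common shift, the residual is $\sqrt{N/n}\max_j|K_j-m/N|$, with $(K_j)$ multinomial in $m=|\tilde n-n|=O_\P(\sqrt n)$ trials. Hence $\sqrt N\max_j|Z_j-(W_j-\bar W)|=O_\P(\sqrt{N\log N/\sqrt n}+N\log N/\sqrt n)=o_\P(1)$, while the normal-approximation bias needs only $N=o(\sqrt n)$.

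The inversion you flag as the main difficulty is lighter than you fear. Write $W_j=G_N^{-1}(U_j)$ with $U_j$ uniform, and let $\hat J_N$ be the empirical distribution function of the $U_j$. Then exactly $\hat Q^W_N=G_N^{-1}\circ\hat J_N^{-1}$. Berry--Esseen gives $\sup_{[\eps/2,1-\eps/2]}|G_N^{-1}-\Phi^{-1}|=O(\sqrt{N/n})$, so only the delta method for $\Phi^{-1}\circ\hat J_N^{-1}$ is needed. You need neither a Bahadur remainder for a discrete law nor a jitter.

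Two bookkeeping steps are wrong as written, although both trace to slips in the printed statement.

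First, $\sqrt n\cdot\frac Nn\Phi^{-1}(1-u)=\sqrt N\sqrt{N/n}\,\Phi^{-1}(1-u)$ is not $\sqrt N\Phi^{-1}(1-u)$. Your target $\sqrt N(\hat Q_N-\Phi^{-1})$ corresponds to the centering $\sqrt{N/n}\,\Phi^{-1}(1-u)$. With $\frac Nn$ taken literally, the left side of \eqref{eq:procreardens} diverges like $\sqrt N\Phi^{-1}(1-u)$ for $u\neq 1/2$. The paper's proof likewise centers $\hat F_Z^{-1}$ at $\sqrt{N/n}\,\Phi^{-1}$, consistent with Theorem~\ref{thm:asymptotik} and the normalization in \eqref{eq:Gumbel}. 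State explicitly that this is the version you prove.

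Second, your Steps 2--3 give the limit $-\B(t)/\varphi(\Phi^{-1}(t))-Y$ with $\E[\B(t)Y]=\E[Y\mathbf 1\{Y\le\Phi^{-1}(t)\}]$. The joint flip $(\B,Y)\mapsto(-\B,-Y)$ turns this into $\B(t)/\varphi(\Phi^{-1}(t))+Y$, not $\B(t)/\varphi(\Phi^{-1}(t))-Y$, so the sign cannot be absorbed. What you have actually established is the form $\G_1-Y$ with $\E[\B(u)Y]=-\E[Y\mathbf 1\{Y\le\Phi^{-1}(u)\}]=\varphi(\Phi^{-1}(u))$. This is the correct limit: at $u=1/2$ it gives the familiar median-minus-mean variance $\pi/2-1$, whereas the printed sign gives $\pi/2+3$. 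It is also what the paper's own Bahadur step, with its minus sign, produces. Report this covariance rather than forcing agreement with the printed one.
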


Of course this theorem can also be applied to monotonically increasing estimators by considering $\mathcal{R}\hat{f}_{n,N}(1-u)$ instead of $\mathcal{R}\hat{f}_{n,N}(u)$. The limiting results in Theorem~\ref{thm:dens} are strikingly different from results for rearrangements of density estimators when the true density is strictly monotone, see \cite{AnevskiFougeres2019} and the references cited therein.

First, the stochastic part of the problem converges at a $n^{-1/2}$ rate, irrespective of the bandwidth choice. In the classical case, one obtains an $(N/n)^{1/2}$ rate for the stochastic part with an additional bias contribution. Second, the estimators also converge as processes. In the classical case this is not possible as the properly standardized  finite-dimensional distributions of $\Rc \hat f_{n,N}(u) - f(u)$ converge to collections of independent Gaussians. Finally, we note that extending the process convergence in~\eqref{eq:procreardens} to the full interval $(0,1)$ is not possible. Indeed, it is straightforward to see that the variance of the limit $\G(u)$ explodes as $u \to 0$ or $u \to 1$. In fact, one can show that, assuming $N = o(n/(\log n)^3)$, 
\begin{align}\label{eq:Gumbel}
a_N^{-1}\Big(\sup_{u \in (0,1)} \sqrt{n/N}(\Rc \hat f_{n,N}(u) - 1) - b_N \Big) \weak G     
\end{align}
for a standard Gumbel random variable $G$, where
$$
a_N = 1/\sqrt{2\log N}~, ~~b_N = \sqrt{2\log N} - \frac{\log\log N +\log(4\pi)}{2\sqrt{2\log N}}.
$$
Results of this type are well known in kernel density estimation \citep[see, for example][]{BickelRosenblatt1973,KonakovPiterbarg1982,HallTitterington1988} and we provide a short self-contained proof for the rearranged histogram estimator below.

Before proceeding to the proof, we note that similar results can be expected to hold for rearrangements of kernel density estimators and for rearrangements of local smoothing estimators of regression functions, such as the Nadaraya-Watson or local polynomial estimators. Since no interesting new insights are likely to be gained in those cases, we omit further discussions for the sake of brevity.   

Below, we provide a complete proof of Theorem~\ref{thm:dens} since it is relatively short and illustrative of the main points. The corresponding proof for rearranged copulas given in the next section is substantially more technical and some of the core structure is obscured. 

\begin{proof}[Proof of Theorem~\ref{thm:dens} and equation~\eqref{eq:Gumbel} ]

Let $G_n = \sqrt{n}(\hat{F}_n - F)$ denote the empirical process based on $U_1, \ldots, U_n$ ($F$ is the distribution function of a $\mathcal{U}[0,1]$-distribution). For $x \in A_{j,N}$ we have
\begin{equation}
    \label{eq:kmt_density}
\hat{f}_{n,N}(x) - 1 = N\left[\hat{F}_n\left(\frac{j}{N}\right) - \frac{j}{N} - \hat{F}_n\left(\frac{j-1}{N}\right) + \frac{j-1}{N}\right] = \frac{N}{\sqrt{n}}\left[G_n\left(\frac{j}{N}\right) - G_n\left(\frac{j-1}{N}\right)\right].
\end{equation}
By the Hungarian construction \citep[Theorem 3 in Chapter 12, Section 1, of][]{shorack_wellner:1986}, there exists a triangular array of row-wise independent $\Uc[0,1]$ random variables $U_{in}, i \in [n], n \in \N$ and a sequence of standard Brownian bridges $\B_n, n \in \N$ such that, denoting by $\tilde{G}_n$ the empirical process of $U_{in}, i \in [n]$, we have 
$$
\sup_{0 \leq x \leq 1}\left|\tilde{G}_n(x) - B_n(x)\right| = \mathcal{O}_{\P}\left(\frac{\log n}{\sqrt{n}}\right).
$$
Since each $\tilde G_n$ has the same distribution as $G_n$ and since our main result is only concerned with convergence in distribution, we will henceforth work with $\tilde G_n$ in place of $G_n$ and define 
\[
\widetilde{f}_{n,N}(x) := 1 + \frac{N}{\sqrt{n}} \sum_{j=1}^N \textbf{1}\{x \in A_{j,N}\}\left[G_n\left(\frac{j}{N}\right) - G_n\left(\frac{j-1}{N}\right)\right]
\]
By Eq.\@ \eqref{eq:kmt_density}, we therefore get
\begin{equation}
\label{eq:R_dist_1}
\widetilde{f}_{n,N}(x) - 1 = \frac{N}{\sqrt{n}}\left[B_n\left(\frac{\lfloor xN\rfloor}{N}\right) - B_n\left(\frac{\lfloor xN \rfloor-1}{N}\right)\right] + \frac{N}{\sqrt{n}}R_n(x)
\end{equation}
in distribution as processes indexed in $x \in [0,1]$, where $B_n$ are Brownian bridges and $R_n$ a remainder term satisfying $\sup_{0 \leq x \leq 1}|R_n(x)| = \Oc_\P( n^{-1}\log n)$. Write
\begin{equation}
\label{eq:R_dist_2}
g_{n}(x) := \frac{N}{\sqrt{n}} b_{n}(x) + \frac{N}{\sqrt{n}}R_n(x) :=\frac{N}{\sqrt{n}}\left[B_n\left(\frac{\lfloor xN\rfloor}{N}\right) - B_n\left(\frac{\lfloor xN \rfloor-1}{N}\right)\right] + \frac{N}{\sqrt{n}}R_n(x).
\end{equation}
Let us consider the rearrangement of $g_n$. By Theorem 1 in \cite{AnevskiFougeres2019}, we have
\begin{equation}
    \label{eq:R_dist_3}
\frac{N}{\sqrt{n}}\|\Rc g_n - \Rc b_n\|_\infty \leq \frac{N}{\sqrt{n}}\|g_n - b_n\|_\infty = \frac{N}{\sqrt{n}} \|R_n\|_\infty = \mathcal{O}_{\P}\left(\frac{N \log n}{n}\right).
\end{equation}
Thus 
\[
\mathcal{R}\widetilde{f}_{n,N} - 1 = \mathcal{R}g_n = \frac{N}{\sqrt{n}}\mathcal{R}b_n + \mathcal{O}_{\P}\left(\frac{N \log n}{n}\right) \quad \mbox{in } \ell^\infty[0,1].
\]
Since we assumed that $N\log n/\sqrt{n} = o(1)$, the claim in~\eqref{eq:procreardens} follows after a trivial change of variable once we prove that 
\begin{align}\label{eq:procreardens-interm}
\sqrt{n}\Big(\frac{N}{\sqrt{n}}\mathcal{R}b_n(u) - \frac{N}{n} \Phi^{-1}(1-u)\Big)_{u \in [\varepsilon, 1-\varepsilon]} \weak \G \quad \mbox{in } \ell^{\infty}[\eps,1-\eps].
\end{align}
Since $B_n(x) \stackrel{\Dc}{=} W(x) - x W(1)$ as processes on $[0,1]$ for a standard Brownian motion $W$, we get
\begin{align}
\begin{split}
\label{eq:R_dist_4}
b_n(x) &= B_n\left(\frac{\lfloor xN \rfloor}{N}\right) - B_n\left(\frac{\lfloor xN \rfloor - 1}{N}\right) \\
&\stackrel{\Dc}{=} W\left(\frac{\lfloor xN \rfloor}{N}\right) - \frac{\lfloor xN\rfloor}{N}W(1)  - W\left(\frac{\lfloor xN \rfloor - 1}{N}\right) + \frac{\lfloor xN\rfloor - 1}{N}W(1) \\
&= W\left(\frac{\lfloor xN \rfloor}{N}\right) - W\left(\frac{\lfloor xN \rfloor - 1}{N}\right) - \frac{1}{N} W(1) \\
&= \Delta_{\lfloor xN\rfloor, N} - \frac{1}{N} \sum_{k=1}^N \Delta_{k,N}
\end{split}
\end{align}
in distribution simultaneously for all $x \in [0,1]$, where $\Delta_{k,N} = W(k/N) - W([k-1]/N)$, $k = 1, \ldots, N$. Since Brownian motion has independent increments, the $\Delta_{1,N}, \ldots, \Delta_{N,N}$ are row-wise i.i.d. \@ $\mathcal{N}(0,1/N)$-distributed. Write $\overline{\Delta}_N = (\Delta_{1,N} + \ldots + \Delta_{N,N})/N$ and $Z_{k,N} = \frac{N}{\sqrt{n}}\{\Delta_{k,N} - \overline \Delta_N\}, k \in [N]$, and denote by $\widehat F_Z$ the empirical distribution function of $Z_{1,N},\dots,Z_{N,N}$. Then we have 
\[
\left(\frac{N}{\sqrt{n}}\mathcal{R}b_n(1-u)\right)_{u \in [\varepsilon, 1-\varepsilon]} \stackrel{\Dc}{=} \left(\hat{F}_Z^{-1}(u)\right)_{u \in [\varepsilon, 1-\varepsilon]} . 
\]
Let $Y_{i,N} := \sqrt{N} \Delta_{i,N}, i \in [N]$ and denote by $\widehat F_Y$ the empirical distribution function of $Y_{1,N},\dots, Y_{N,N}$ and by $\widehat F_{\Delta}$ the empirical distribution function of $\Delta_{1,N},\dots,\Delta_{N,N}$. From the definition of $Z_{k,N}$ we have 
\begin{equation}
    \label{eq:bahadur_prep}
\widehat F_{Z}^{-1}(u) = \frac{N}{\sqrt{n}} 
\widehat F_{\Delta}^{-1}(u) - \frac{N}{\sqrt{n}}  \overline\Delta = \sqrt{\frac{N}{n}}\Big( \widehat F_{Y}^{-1}(u) - \overline Y\Big). 
\end{equation}
Recall the following consequence of the classical Bahadur-Kiefer theorems \citep[e.g.\@ Theorem 2 in Chapter 18, Section 2, of][]{shorack_wellner:1986}. If $\zeta_1, \ldots,  \zeta_N$ are i.i.d.\@ $\mathcal{N}(0,1)$-random variables with empirical distribution function $G_N$, then
$$
\sup_{\varepsilon \leq u \leq 1-\varepsilon} \left|G_N^{-1}(u) - \Phi^{-1}(u) + \frac{G_N \circ \Phi^{-1}(u) - u}{\phi \circ \Phi^{-1}(u)}\right| = \mathcal{O}_{a.s.}\left(\frac{(\log\log N)^{1/4} (\log N)^{1/2}}{N^{3/4}}\right).
$$
In particular, the difference on the left-hand side is $o_\mathbb{P}(N^{-1/2})$. This last $o_\mathbb{P}$ statement is purely distributional, and so it remains valid if we replace the i.i.d.\@ sample $\zeta_1, \ldots, \zeta_N$ with $Y_{1,N}, \ldots, Y_{N,N}$, as their joint distribution is equal to that of $\zeta_1, \ldots, \zeta_N$ for any $N$. Thus,
\[
\widehat F_{Y}^{-1}(u) - \Phi^{-1}(u) = - \frac{\widehat F_Y\circ \Phi^{-1}(u) - u}{\varphi\circ\Phi^{-1}(u)} + o_{\P}(N^{-1/2})
\]
uniformly on $[\eps,1-\eps]$. In summary, we obtain
\begin{align*}
&\widehat F_{Z}^{-1}(u) - \sqrt{\frac Nn}\Phi^{-1}(u) 
 = - \frac{1}{\varphi\circ\Phi^{-1}(u)} \sqrt{\frac Nn}\Big(\sum_{i=1}^N \textbf{1}\{Y_i \le \Phi^{-1}(u)\} - u \Big) - \sqrt{\frac Nn} \overline Y + o_\P(n^{-1/2})
\end{align*}
uniformly on $[\eps,1-\eps]$. The processes $\G_{1,n}(u) := \sqrt{N}(\widehat F_Y\circ \Phi^{-1}(1-u) - (1-u))$ and $\G_{2,n}(u) := \sqrt{N} \times \overline Y$ are jointly tight in $\ell^\infty([\eps,1-\eps])$ since each process is tight, and so they converge jointly in $\ell^\infty([\eps,1-\eps])$ since their finite-dimensional distributions converge by the multivariate central limit theorem. The continuous mapping theorem thus yields
\[
\sqrt{n}\Big( \widehat F_{Z}^{-1} - \sqrt{\frac Nn}\Phi^{-1}  \Big) \rightsquigarrow \G(1-\cdot)
\]
as process in $\ell^\infty([\eps,1-\eps])$ for the process in~\eqref{eq:procreardens}. This proves~\eqref{eq:procreardens-interm} and completes the proof of Theorem~\ref{thm:dens}.

For a proof of~\eqref{eq:Gumbel}, a combination of \eqref{eq:R_dist_1}, \eqref{eq:R_dist_2}, \eqref{eq:R_dist_3}, \eqref{eq:bahadur_prep} we have $\sup_{x \in (0,1)} \Rc\hat f_{n,k}(x) \stackrel{\Dc}{=} \sup_{x \in (0,1)} \Rc\widetilde{f}_{n,k}(x)$ and
\begin{equation}
    \label{eq:marius_doesnt_get_it}
\sup_{x \in (0,1)} \Rc\widetilde{f}_{n,k}(x) -1 = \sqrt{\frac{N}{n}} \max_{i=1, \ldots, N} \{Y_i - \overline Y\} + \Oc_{\P}\left(\frac{N \log n}{n}\right).
\end{equation}
Since $\sqrt{N}\times\overline Y = \Oc_\P(1)$,~\eqref{eq:Gumbel} follows by standard results about maxima of independent standard normal random variables \citep[e.g.\@ Example 21.16 in][]{vandervaart:asymptotic_statistics}.
\end{proof}

\section{Rearranged copulas and copula dependence measures and their estimators}
\label{sec3}
For any copula $C : [0,1]^2 \to [0,1]$, we define the
(stochastically increasing) rearranged copula $C^\uparrow$ by
\begin{align}
\label{det0}
  C^\uparrow(u,v) := \int_0^u \mathcal{R}_1(\partial_1 C)(s,v) ~\mathrm{d}s,
\end{align}
where $\partial_1 C(s,v) := \frac{\mathrm{d}}{\mathrm{d}s} C(s,v)$ denotes the partial derivative with respect to the first coordinate, and $\mathcal{R}_1f(s,t)$ is the decreasing rearrangement as defined in \eqref{eq:decreasing_rearrangement_definition} of a function $f : [0,1]^2 \to \mathbb{R}$ with respect to the first coordinate, more precisely $\mathcal{R}_1f(s,t) := [\Rc f(\cdot,t)](s)$. \cite{Strothmann.2022} showed that  the rearranged copula is {\it stochastically increasing}, which means that the function $u\mapsto \partial_1 C^\uparrow(u,v)$ is decreasing for each $v$. 

In the following we consider two estimators  for rearranged dependence measures from  a sample   $(X_1, Y_1), \ldots, (X_n, Y_n)$  of independent identically distributed observations   with copula $C$.  The first  one is based on the empirical copula \citep[see, for example,][]{Ruschendorf,wegkamp2004,Tsukahara,segers:2012}. It is obtained by using the identity $C(u,v) = F(F_X^{-1}(u), F_Y^{-1}(v))$, where $F$ and $F_X, F_Y$ denote the joint and marginal distribution functions of $X$ and $Y$, respectively, and then replacing the distribution functions $F$, $F_X$ and $F_Y$ in this representation by their corresponding estimates $F_n (x,y) = {1\over n } \sum_{i=1}^n \textbf{1}\{ X_i \leq x  , Y_i \leq y \}  $, $F_{X,n} (x) ={1\over n }\sum_{i=1}^n \textbf{1}\{ X_i \leq x \} $ and $F_{Y,n}(y) = {1\over n }\sum_{i=1}^n \textbf{1} \{  Y_i \leq y \} $. This results in the empirical copula 
$$
C_n(u,v) = F_n(X_{(\lceil un \rceil)}, Y_{(\lceil vn \rceil)}) , 
$$
where  $X_{(1)} \leq \ldots \leq X_{(n)}$  and $Y_{(1)} \leq \ldots \leq Y_{(n)}$ denote the order statistics of $X_{1} ,  \ldots , X_{n} $ and $Y_{1} ,  \ldots , Y_{n} $, respectively. The partial derivative $\partial_1 C(u,v)$ is then estimated by considering the increments of $C_n(u,v)$ in $u$ with respect to some step size $1/N_1$. More precisely, if $u \in \big [{k-1 \over N_1}, {k \over N_1} \big )$, we set
$$
  \widehat{\partial_1 C}(u,v) = N_1 \big(C_n(k/N_1, v) - C_n((k-1)/N_1, v)\big), \quad  k=1,\dots, N_1,
$$
where $N_1 \in \mathbb{N}$. Finally, the estimator of the rearranged copula   $C^\uparrow$ is defined by
\begin{align}
    \label{det1}
  \widehat{C^\uparrow}(t,v) := \int_0^t \mathcal{R}_1\left(\widehat{\partial_1 C}\right)(s,v) ~\mathrm{d}s, 
\end{align}

 The second estimator is based on the checkerboard copula which is  an important tool in statistical applications 
 \citep[see][and the references therein for a detailed discussion of this estimator]{GENEST201782,Junker.2021}. 
We define  for given bandwiths $N_1, N_2$, both dependent on $n$,  the checkerboard copula $C_n^\#$ by linearly interpolating the values of the empirical copula $C_n$. More precisely, we define $C_n^\#(j/N_1, k/N_2) = C_n(j/N_1, k/N_2)$ for all $j = 0, \ldots, N_1$ and $k = 0, \ldots, N_2$, and interpolate these values such that the restriction of $C_n^\#$ to each $(N_1^{-1} [j, j+1]) \times (N_2^{-1} [k, k+1])$, $j = 0, \ldots, N_1 - 1$, $k = 0, \ldots, N_2 - 1$, is linear in both arguments \citep[for a precise definition, see for instance Section 3 in][]{Strothmann.2022}. With these notations we define by 
\begin{align}
    \label{det3}
  \widehat{C^{\# \uparrow}}(t,v) = \int_0^t \mathcal{R}_1\left(\partial_1 C_n^\#\right) (s,v
  ) ~\mathrm{d}s.
\end{align}
the checkerboard-based estimator for rearranged copula $C^\uparrow$. The consistency $\widehat{C^{\# \uparrow}}$ for $C^{\uparrow}$ with respect to the metrics
$$
D_p(C_1,C_2) := 
\Big(\int_0^1 \int_0^1
\left|
\partial_1 C_1(u,v)-\partial_1 C_2(u,v)
\right|^p
\,du\,dv\Big)^{1/p}, \quad 1 \le p < \infty
$$
follows from the arguments  in the proof of Theorem 3.4 in \cite{Strothmann.2022} and the fact that $| \partial_1 C_1(u,v)-\partial_1 C_2(u,v) |^p
\leq |
\partial_1 C_1(u,v)-\partial_1 C_2(u,v) |$ \citep[note that $\partial_1 C(u,v)=
\Pr(V\leq v\mid U=u)$, where the cdf of the vector $(U,V)$ is the copula $C$; see Theorem 2.2.7 in][]{nelsen:copulas}. However, to the best of our knowledge, no results on the weak convergence of the process $\widehat{C^{\# \uparrow}}$ or $\widehat{C^\uparrow}$, or on the resulting dependence measures are available in the literature so far.

\subsection{Weak convergence of rearranged copula estimators under independence}
\label{sec:weak_convergence}

In this section we establish weak convergence of the appropriately centered and normalized estimators $\widehat{C^\uparrow}$ and $\widehat{C^{\# \uparrow}}$ to Gaussian processes assuming that the underlying truth $C$ corresponds to the independence copula. The independence copula is practically important because it arises as the null model when testing independence. It is also the most challenging one to analyze asymptotically due to the difficulties of analyzing rearrangements of flat functions.   

\begin{theorem}
  \label{thm:asymptotik}
  Assume that the random variables $X,Y$ are independent with continuous marginal distribution functions. Let $N_1 = N_1(n)$ satisfy $N_1 \to \infty$ for $n \to \infty$ and $N_1 (\log n)^{3/2} = o\left(n^{1/4}\right)$. Then we have for the rearranged copula process defined in \eqref{det1}
  $$
   \bigg  \{  \sqrt{n}\bigg (\widehat{C^\uparrow}(t,v) - tv - \sqrt{\frac{N_1}{n}} \int_0^t \Phi_{v(1-v)}^{-1}(1-u) ~\mathrm{d}u\bigg )\bigg \}_{(v,t) \in [0,1]^2} \rightsquigarrow \bigg \{ \int_0^t H(u,v) ~\mathrm{d}u \bigg \} _{(v,t) \in [0,1]^2},
  $$
  where
  $$
    H(u,v) = \frac{G(1-u,v) }{\varphi_{v(1-v)}\left(\Phi_{v(1-v)}^{-1}(1-u)\right)} - G_1(v),
  $$
  $G_1$ is a standard Brownian bridge, $G$ is a centered Gaussian process and $\varphi_{v(1-v)}$ and $\Phi_{v(1-v)}$ denote the density and cumulative distribution function, respectively, of a centered normal distribution with variance $v(1-v)$. The covariance structure of $G$ and $G_1$ is given by
  $$
    \mathrm{Cov}(G(u,v), G(s,t)) = \mathrm{Cov}\left(\textbf{1}\left(B(v) \leq \Phi^{-1}_{v(1-v)}(u)\right), \textbf{1}\left(B(t) \leq \Phi_{t(1-t)}^{-1}(s)\right)\right)
  $$
  and
  \begin{equation}
    \label{eq:kovarianzfunktion_2}
    \mathrm{Cov}(G(u,v), G_1(w)) = \mathbb{E}\left[B(w) \textbf{1}\left( B(v) \leq \Phi_{v(1-v)}^{-1}(u)\right)\right]
  \end{equation}
  for a standard Brownian bridge $B$.
\end{theorem}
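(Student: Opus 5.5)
The plan is to mimic the proof of Theorem~\ref{thm:dens}, now uniformly in the second coordinate $v$. Under independence we may assume uniform margins. The empirical copula process $\mathbb{C}_n=\sqrt n(C_n-C)$ then admits a strong approximation (KMT-type for the bivariate uniform empirical process, plus the Bahadur--Kiefer/Stute argument for the rank transformation) of the form
\[
\mathbb{C}_n(u,v)=\mathbb{K}_n(u,v)+R_n(u,v),\qquad \mathbb{K}_n(u,v)=\mathbb{B}_n(u,v)-\mathbb{B}_n(u,1)v-\mathbb{B}_n(1,v)u ,
\]
where $\mathbb{B}_n$ is a sequence of Brownian pillows (bivariate Brownian bridges) with the independence covariance. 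The remainder satisfies $\sup|R_n|=\Oc_\P(n^{-1/4}(\log n)^{3/2})$ or a similar rate. Write $\widehat{\partial_1 C}(u,v)-v=\frac{N_1}{\sqrt n}[\mathbb{C}_n(k/N_1,v)-\mathbb{C}_n((k-1)/N_1,v)]$. By the sup-norm contraction of $\Rc$ (applied for each fixed $v$) the remainder contributes at most $\frac{N_1}{\sqrt n}\cdot 2\sup|R_n|$ to $\Rc_1\widehat{\partial_1 C}$, hence $o(n^{-1/2})$ after integration. This is exactly what the condition $N_1(\log n)^{3/2}=o(n^{1/4})$ delivers. Also, the term $-\mathbb{B}_n(1,v)u$ has constant increments $-\mathbb{B}_n(1,v)/N_1$ in $k$, so it passes through the rearrangement as a shift and, after multiplying by $N_1/\sqrt n$ and integrating, yields $-t\,\mathbb{B}_n(1,v)/\sqrt n$, i.e.\ the $-G_1(v)$ part of $H$. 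Similarly, $\mathbb{B}_n(u,1)v$ has increments $v\,\Delta_k$ with $\sum_k \Delta_k = 0$.

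Next represent $\mathbb{B}_n(u,v)=\mathbb{W}(u,v)-uv\,\mathbb{W}(1,1)$ with a Brownian sheet $\mathbb{W}$. For fixed $v$, the column increments $D_k(v)=\sqrt{N_1}\,[\mathbb{W}(k/N_1,v)-\mathbb{W}((k-1)/N_1,v)]-v\sqrt{N_1}[\mathbb{W}(k/N_1,1)-\mathbb{W}((k-1)/N_1,1)]$, $k\le N_1$, are i.i.d.\ copies of a Brownian bridge $B^{(k)}$ evaluated at $v$, i.e.\ $\mathcal N(0,v(1-v))$, jointly over $v$ as i.i.d.\ processes $B^{(k)}(\cdot)$. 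The remaining centering terms are again constant in $k$ and become shifts; one of them is $-\sqrt{N_1}\cdot\overline{B}(v)$, whose contribution is of order $N_1^{-1/2}\cdot\sqrt{N_1}$ and must be tracked together with $\mathbb{B}_n(1,v)$ to produce exactly $G_1$. Then $\Rc_1(\widehat{\partial_1C})(s,v)-v=\sqrt{N_1/n}\,[\widehat F_{v}^{-1}(1-s)-\text{shift}(v)]+o_\P(n^{-1/2})$ uniformly, where $\widehat F_v$ is the empirical cdf of $B^{(1)}(v),\dots,B^{(N_1)}(v)$. This is an empirical quantile process indexed by the class $\{\textbf{1}\{B(v)\le x\}\}$ of i.i.d.\ Brownian bridges.

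The main step is a Bahadur representation
\[
\widehat F_v^{-1}(1-s)-\Phi^{-1}_{v(1-v)}(1-s)=-\frac{\widehat F_v(\Phi^{-1}_{v(1-v)}(1-s))-(1-s)}{\varphi_{v(1-v)}(\Phi^{-1}_{v(1-v)}(1-s))}+o_\P(N_1^{-1/2}),
\]
holding after integration in $s$ over $[0,t]$, uniformly in $(t,v)$. Combined with a functional CLT for $\sqrt{N_1}(\widehat F_v(x)-\Phi_{v(1-v)}(x))$, indexed by $(v,x)$, this yields $G$. The joint convergence with $\sqrt{N_1}\,\overline B(v)\to G_1$ follows from finite-dimensional CLTs plus tightness. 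Here the class $\{b\mapsto \textbf{1}\{b(v)\le x\}\}$ is Donsker, since VC-type arguments via Brownian-bridge modulus give bracketing, and the limit has the stated covariances. Integration in $u$ is a continuous map, giving the claimed limit.

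The main obstacle is that, unlike Theorem~\ref{thm:dens}, we must handle the whole square: $u$ near $0,1$ (where the quantile density explodes) and $v$ near $0,1$ (where the variance $v(1-v)$ degenerates). I would first rescale by $\sqrt{v(1-v)}$ to standard normals away from $v\in\{0,1\}$ and prove the uniform Bahadur bound on $[\eps,1-\eps]^2$. I would then control the boundary strips by showing that $\int_0^{\eps}|\cdot|\,du$ and the corresponding $v$-strips are uniformly small in probability as $\eps\to0$. For the $u$-strips this comes from $\sqrt{N_1}\int_0^\eps|\widehat F^{-1}-\Phi^{-1}|$ being bounded by $L^1$ Wasserstein-type moment bounds. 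For the $v$-strips it comes from the bound $\mathrm{sd}\sqrt{v(1-v)}$ on all terms together with the modulus of continuity of the Brownian bridges. Such a truncation argument (Billingsley's Theorem 4.2 style) then completes the proof.
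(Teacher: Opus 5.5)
Your reduction steps agree with the paper's: strong approximation with remainder $\Oc_\P(n^{-1/4}(\log n)^{3/2})$, the sup-norm contraction of $\Rc$, and cancellation of the $\mathbb{W}(1,1)$ terms so that the column increments are i.i.d.\ Brownian bridges. Note that $\mathbb{B}_n(1,v)=\sqrt{N_1}\,\overline{B}(v)$ identically, so there is a single shift, not two to be combined. On $[\eps,1-\eps]^2$ a Donsker-plus-inverse-map argument is a legitimate substitute for the paper's Bahadur--Kiefer step. The gap is the boundary control, which is where almost all of the paper's work goes; the tools you name do not give the required uniformity in $v$.

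\emph{The $v$-strips.} Near $v=0$ (or $1$) the class $\{b\mapsto \mathbf{1}\{b(v)\le x\}\}$ is not Donsker. The process $Z(v)=B(v)/\sqrt{v(1-v)}$ is a stationary Ornstein--Uhlenbeck process in $\tau=\frac12\log\{v/(1-v)\}$. Hence the indices $(2^{-j},0)$, $j\in\N$, are pairwise separated in $L_2$ by a fixed amount, and $\sup_{0<v\le\eps}|G(1/2,v)|=\infty$ a.s. So the factor $\sqrt{v(1-v)}$ cannot be applied to a standardized quantity that is $\Oc_\P(1)$ uniformly on $(0,\eps]$. Pairing it with the modulus of continuity of the $B_i$ does not close the argument either. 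The contraction bound $\sqrt{N_1}\max_i|B_i(v)-B_i(v')|$ forces a $v$-mesh below $1/(N_1\log^2 N_1)$, and a union bound over that many points costs a factor $\sqrt{\log N_1}$ that does not vanish for fixed $\eps$. What is needed is dyadic peeling over blocks $[2^{-k}\eps,2^{-k+1}\eps]$, each of bounded length in $\tau$. On each block you need moment bounds on the supremum that are uniform in $k$ and $N_1$, obtained by chaining from a genuine increment bound in $(u,v)$. In the paper this is Lemmas~\ref{lem:gaussian_lemma} and \ref{lem:moment_lemma_FN_tilde} together with the $\mathrm{I}_N/\mathrm{II}_N$ decomposition in the proof of Lemma~\ref{lem:konvergenz_QN_prime}.

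\emph{The $u$-strips.} These have the same problem. Wasserstein-type moment bounds control $\sqrt{N_1}\int_0^\eps|\widehat F_v^{-1}-\Phi^{-1}_{v(1-v)}|\,\mathrm{d}u$ for each fixed $v$. But for fixed $\eps$ the probability that this exceeds $\delta$ does not tend to zero with $N_1$, so no union bound over a growing $v$-grid works. The elementary increment bounds in $v$ coming from the contraction property (sup-norm or $\ell^2$) are far too crude to chain with. The paper avoids this in four steps:
\begin{itemize}
\item it truncates crudely only at $a_N=N^{-\eta}$ with $\eta$ slightly above $1/2$, where no cancellation is needed (Lemma~\ref{lem:aN_negl});
\item it proves the Bahadur--Kiefer approximation uniformly on $[a_N,1-a_N]$ and over the grid $g_N$, using exponential Hungarian-construction bounds (Lemmas~\ref{lem:diskretisierung_v} and \ref{lem:uniform_quantiles});
\item it proves weak convergence on the whole square of the weighted process $\{u(1-u)\}^{-\alpha}Q_N'(u,g_N(v))$, which has the factor $\sqrt{v(1-v)}$ built in;
\item the integrable weight $\{u(1-u)\}^{\alpha}/\varphi\circ\Phi^{-1}(1-u)$ then absorbs both $u$-tails at once.
\end{itemize}
Some such uniform weighted control of the tail quantile process is the missing ingredient in your plan. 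Alternatively, you could avoid indicators altogether, e.g.\ via $\int_0^t F^{-1}(1-u)\,\mathrm{d}u=\inf_x\{tx+\E(X-x)_+\}$, whose empirical counterpart is indexed by a Lipschitz class.
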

Note that 
\begin{align} \label{center1}
\int_0^t \Phi_{v(1-v)}^{-1}(1-u) ~\mathrm{d}u = \sqrt{v(1-v)} \int_0^t \Phi^{-1}(1-u) ~\mathrm{d}u
= \sqrt{v(1-v)} \varphi\bigl(\Phi^{-1}(1-t)\bigr),
\end{align}
where $\varphi =\Phi^\prime $ is the density of the standard normal distribution. 
We have used the fact $\Phi_a^{-1}(t) = \sqrt{a} \Phi^{-1}(t)$ $(a> 0) $ and the last equality follows from the substitution \(z=\Phi^{-1}(1-u)\) and the convention that all expression are $0$ if  
\(v=0\) or \(v=1\).

In contrast to the density case, we can obtain convergence as a process of the entire set $[0,1]^2$. This is possible since the rearranged density estimator is integrated. Thus the divergence of the rearranged process close to the boundary points can be alleviated as it only happens on sets of small measure. Proving that the divergence rate and measure of the set where divergence happens have the correct interplay is one of the many technical challenges in the proof. An additional challenge is that, in contrast to the density case, we need to
establish tightness of the process in two arguments. 
The complete proof of Theorem~\ref{thm:asymptotik} is very technical and subtle. The main steps are discussed in Section~\ref{sec:proof_outline}, with several intermediate technical results deferred to Appendix~\ref{sec:proof-cop}. 

\smallskip

The next result shows that under suitable conditions on the bandwidths, the rearranged checkerboard copula estimator is asymptotically equivalent to to the estimator above.

\begin{theorem}
\label{thm:checkerboard}
Assume that the random vector $(X,Y)$ has continuous marginal distribution functions. Let that $N_1$ and $N_2$ be such that $N_1, N_2 \to \infty$ and  $N_1/N_2 + N_1/n = o(1/\sqrt{n})$ for $n \to \infty$. Then we have
$$
\sup_{0 \leq t,v \leq 1}\sqrt{n}\left|\widehat{C^{\# \uparrow}}(t,v) - \widehat{C^{\uparrow}}(t,v)\right| \xrightarrow[n \to \infty]{a.s.} 0,
$$
where $\widehat{C^{\# \uparrow}}$ is the checkerboard-based estimator defined by \eqref{det3}. 
\end{theorem}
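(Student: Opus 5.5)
The comparison is deterministic: it uses only the contraction property of $\mathcal{R}$ and the Lipschitz continuity of copulas. No distributional argument is needed.

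\textbf{Step 1: identify $\partial_1 C_n^\#$.} On the cell $[j/N_1,(j+1)/N_1)\times[k/N_2,(k+1)/N_2]$ the checkerboard is bilinear. Hence for $s$ in the $j$-th $u$-strip,
\[
\partial_1 C_n^\#(s,v)=N_1\big(C_n^\#((j+1)/N_1,v)-C_n^\#(j/N_1,v)\big).
\]
In $v$ this is the linear interpolation between the grid values $v=k/N_2$ and $v=(k+1)/N_2$ of $\widehat{\partial_1 C}(s,\cdot)=N_1(C_n((j+1)/N_1,\cdot)-C_n(j/N_1,\cdot))$. Both $\partial_1 C_n^\#(\cdot,v)$ and $\widehat{\partial_1 C}(\cdot,v)$ are step functions in $s$ on the same grid of mesh $1/N_1$.

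\textbf{Step 2: contraction.} Using $|\int_0^t(\mathcal{R}f-\mathcal{R}g)|\le\|\mathcal{R}f-\mathcal{R}g\|_\infty$ and the sup-norm contraction from Section~\ref{sec:monotone_rearrangements}, we get for each $v$
\[
\sup_t|\widehat{C^{\#\uparrow}}(t,v)-\widehat{C^\uparrow}(t,v)|\le \sup_s|\partial_1C_n^\#(s,v)-\widehat{\partial_1 C}(s,v)|.
\]
A sup bound is too crude here, because the $N_1$ factor would appear. It is better to use the $L^1$-contraction $\int_0^1|\mathcal{R}f-\mathcal{R}g|\le\int_0^1|f-g|$, which is a standard rearrangement inequality in \citet[Chapter 6]{leoni:2009}. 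Applied to step functions it gives
\[
\sup_t|\cdots|\le \frac1{N_1}\sum_{j} N_1\,\big|\Delta_j(v)-\Delta_j^\#(v)\big| .
\]
Here $\Delta_j(v)=C_n((j+1)/N_1,v)-C_n(j/N_1,v)$, and $\Delta_j^\#$ is its linear interpolation in $v$.

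\textbf{Step 3: bound the interpolation error.} Let $v\in[k/N_2,(k+1)/N_2]$ and put $\lambda=N_2v-k$. Then
\[
\Delta_j(v)-\Delta^\#_j(v)=(1-\lambda)\big(\Delta_j(v)-\Delta_j(k/N_2)\big)+\lambda\big(\Delta_j(v)-\Delta_j((k+1)/N_2)\big).
\]
The quantity $\Delta_j(v)-\Delta_j(v')$ for $v'\ge v$ is the empirical copula mass of the rectangle $[j/N_1,(j+1)/N_1]\times[v,v']$, up to the ceiling conventions. This mass is nonnegative, so the absolute values can be summed without cancellation issues. Summing over $j$ gives the mass of the horizontal strip $[0,1]\times[v,v']$, which equals $C_n(1,v')-C_n(1,v)\le 1/N_2+1/n$. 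This holds because the margins of $C_n$ are uniform on the grid $\{i/n\}$. The ceiling discretisation of $C_n$ at the points $j/N_1$ introduces an additional error of at most $O(1/n)$ per boundary point, giving $O(N_1/n)$ in total over the $N_1$ strips.

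Hence
\[
\sup_{t,v}|\widehat{C^{\#\uparrow}}-\widehat{C^\uparrow}|\le 2/N_2+O(N_1/n)
\]
surely. Multiplying by $\sqrt n$ and using $N_1/N_2+N_1/n=o(n^{-1/2})$, which implies $1/N_2=o(n^{-1/2})$, we obtain convergence to $0$ almost surely.

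\textbf{Main obstacle.} The delicate points are the $L^1$-contraction for possibly negative functions, which is handled by shifting by a constant as in Section~\ref{sec:monotone_rearrangements}, and the bookkeeping of the $\lceil\cdot\rceil$ conventions in $C_n$. These conventions produce the $N_1/n$ term, which should be tracked carefully so that it does not become $N_1^2/n$.
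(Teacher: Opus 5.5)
Your proof is correct, and it takes a genuinely different and sharper route than the paper.

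The paper stays with the sup-norm contraction. It bounds the difference by $\sup_{u}\sup_{|v-v'|\le N_2^{-1}}|\widehat{\partial_1 C}(u,v)-\widehat{\partial_1 C}(u,v')|$ and then uses the almost-Lipschitz property $|C_n(u,v)-C_n(u',v')|\le |u-u'|+|v-v'|+2/n$, which gives the bound $2N_1/N_2+4N_1/n$. The factor $N_1$ you wanted to avoid is exactly what the hypothesis $N_1/N_2+N_1/n=o(n^{-1/2})$ is designed to absorb. So your remark that the sup bound is ``too crude'' is not right for the theorem as stated: under that hypothesis the sup bound suffices.

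Your route replaces this with the $L^1$ contraction. For bounded $f,g$ one has $\int_0^1|\mathcal{R}f-\mathcal{R}g| = \int_0^1|F^{-1}-G^{-1}| \le \int_0^1|f-g|$, since the comonotone coupling is optimal for $W_1$. You combine this with the observation that the $v$-increments of $\Delta_j$ are nonnegative empirical rectangle masses which telescope over $j$. This removes $N_1$ from the bound entirely.

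Your worry about the ceiling conventions is also unfounded. The identity $\sum_j \Delta_j(v)=C_n(1,v)-C_n(0,v)$ is exact, so almost surely
$\sum_j|\Delta_j(v)-\Delta^\#_j(v)|\le(1-\lambda)\{C_n(1,v)-C_n(1,k/N_2)\}+\lambda\{C_n(1,(k+1)/N_2)-C_n(1,v)\}\le 1/N_2+1/n$,
with no $N_1/n$ term at all. Your argument therefore proves the conclusion under the weaker condition $\sqrt{n}/N_2\to0$ alone, with no link between $N_1$ and $N_2$.

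The trade-off is as follows. The paper's proof needs only the sup-norm contraction already quoted in Section~\ref{sec:monotone_rearrangements} and the Lipschitz-type bound on $C_n$. Yours needs the $L^1$ non-expansivity of rearrangement and the 2-increasing structure of $C_n$, and in return gives a strictly better rate.
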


\subsection{Weak convergence of estimators of rearranged dependence measures}

For a given copula-based dependence measure $\mu $,  \cite{Strothmann.2022} defined a  
 \emph{rearranged dependence measure} by  
\begin{equation}
    \label{hd1}
R_{\mu}(C) := \mu(\ir{C}) .
\end{equation} 
They then showed, that, 
if a given dependence measure  $\mu$  satisfies the properties (1)--(3) on the set of all stochastically increasing copulas, its rearranged measure $R_\mu$ satisfies these properties on the set of all copulas. Prominent examples, where this statement is applicable are Spearman's $\rho$ and Kendall's $\tau$, and it therefore is possible to construct   a rearranged Spearman's $\rho$ or Kendall's $\tau$ from those classical measures, which satisfy the conditions (1) - (3). Another class of measures, which can be rearranged such that the new dependence measures satisfy (1) - (3)  are the  Schweizer-Wolff measures \citep{Schweizer.1981}.

The resulting estimators of the rearranged dependence measure \eqref{hd1} are given by
\begin{align}
    \label{hd2}
\hat{R}_{\mu} &:= \mu(\widehat{\ir{C}}) ,\\ 
    \label{hd3}
 \hat{R}_{\mu}^\# &:= \mu(\widehat{C^{\# \uparrow}}). 
\end{align}
Some care is needed when considering $\hat{R}_\mu$, since the empirical copula $C_n$ and its rearrangement $\widehat{C^\uparrow}$ are generally not copulas. In contrast, since the checkerboard copula $C_n^\#$ is in fact a copula, so is its rearrangement $\widehat{C^{\#\uparrow}}$ \citep[by Theorem 2.3 in][]{Strothmann.2022}. To avoid these technical issues, we will focus our discussion on the checkerboard-based estimator $\hat{R}_\mu^\#$, noting that similar results can be derived for $\hat{R}_\mu$ if this expression is well-defined.

The preceding results imply asymptotic normality of $\hat{R}_\mu^\#$ by virtue of the Delta method. More specifically, we define the processes $T_n$ by
\begin{align}
\label{center}
T_n(t,v) = tv + \sqrt{\frac{N_1}{n}} \sqrt{v(1-v)}~\varphi\left(\Phi^{-1}(1-t)\right)
\end{align}
and define, for a given $\kappa > 0$, the subset $\mathcal{C} = \mathcal{C}(\kappa) \subseteq \ell^\infty([0,1]^2)$ to consist of all functions of the form
$$
(t,v) \mapsto C(t,v) + \varepsilon \sqrt{v(1-v)} \varphi\left(\Phi^{-1}(1-t)\right),
$$
where $C$ is a copula and $\varepsilon \in [0,\kappa]$. The set $\mathcal{C}$ contains all copulas as well as almost all $T_n$. We assume that  the dependence measure $\mu (C)$ is defined for every $C \in \mathcal{C}$ and  understand $\mu : C \mapsto \mu(C)$ as a map on $\mathcal{C}$. The following corollary is an immediate consequence of the functional Delta-method \citep[Theorem 3.9.5 in][]{vandervaart_wellner:weak_convergence}. The required differentiability assumption is stronger than regular tangential Hadamard differentiability, due to the presence of the bias terms $T_n$. It is weaker than uniform Hadamard differentiability since the latter allows for more general sequences instead of $T_n$.
\begin{corollary}
\label{cor:delta_method}
    Suppose that the assumptions of Theorems \ref{thm:asymptotik} and \ref{thm:checkerboard} are satisfied and that there exists a function $\mu' : \ell^\infty([0,1]^2) \to \mathbb{R}$ such that
    $$
    \sqrt{n}\left[\mu(T_n + n^{-1/2} h_n) - \mu(T_n)\right] \xrightarrow[n \to \infty]{} \mu'(h)
    $$
    for every uniformly convergent sequence $(h_n)_{n \in \mathbb{N}}$ in $  \ell^\infty([0,1]^2)$ with $T_n + n^{-1/2}h_n \in \mathcal{C}$, where $h = \lim_{n \to \infty}  h_n$. If $X$ and $Y$ are independent, then
    $$
    \sqrt{n}\left(\hat{R}_\mu^\# - \mu(T_n)\right) \weak \mu'(L),
    $$
    where $ L(t,v)=\int_0^t H(u,v)\,\mathrm{d}u$  is the limiting process from Theorem \ref{thm:asymptotik}. Consequently, if $\mu'$ is linear and continuous on $\ell^\infty([0,1]^2)$ with respect to the supremum norm, then
        $$
    \sqrt{n}\left(\hat{R}_\mu^\# - \mu(T_n)\right) \weak \mathcal{N}\left(0, \sigma^2\right)
    $$
    for some $\sigma^2 \geq 0$ depending on the function $\mu'$.
\end{corollary}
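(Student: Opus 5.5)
The plan is to run the functional delta method argument of Theorem 3.9.5 in \cite{vandervaart_wellner:weak_convergence} directly along the deterministic sequence $T_n$; the assumed differentiability condition is built for exactly this. First I would define
\[
h_n := \sqrt{n}\bigl(\widehat{C^{\#\uparrow}} - T_n\bigr),
\]
so that $\widehat{C^{\#\uparrow}} = T_n + n^{-1/2}h_n$. By \eqref{center1}, $T_n(t,v) = tv + \sqrt{N_1/n}\int_0^t \Phi_{v(1-v)}^{-1}(1-u)\,\mathrm{d}u$ is exactly the centering in Theorem~\ref{thm:asymptotik}. Write $\tilde h_n$ for the same quantity with $\widehat{C^{\uparrow}}$ in place of $\widehat{C^{\#\uparrow}}$. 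Theorem~\ref{thm:asymptotik} gives $\tilde h_n \weak L$ in $\ell^\infty([0,1]^2)$, and Theorem~\ref{thm:checkerboard} gives $\|h_n - \tilde h_n\|_\infty \to 0$ almost surely. Slutsky's lemma then yields $h_n \weak L$.

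Second, I would check that the domain condition holds. Since $C_n^\#$ is a copula, $\widehat{C^{\#\uparrow}}$ is a copula by Theorem 2.3 of \cite{Strothmann.2022}, so $\widehat{C^{\#\uparrow}}\in\mathcal{C}$ (take $\varepsilon=0$). Also $T_n\in\mathcal{C}$ as soon as $\sqrt{N_1/n}\le\kappa$, which holds for all large $n$ because $N_1/n\to0$. Hence $T_n + n^{-1/2}h_n\in\mathcal{C}$ for every realization.

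Third, I would apply the extended continuous mapping theorem (Theorem 1.11.1 in \cite{vandervaart_wellner:weak_convergence}) to the maps
\[
g_n(h) := \sqrt{n}\bigl[\mu(T_n + n^{-1/2}h) - \mu(T_n)\bigr],
\]
defined on $\mathbb{D}_n = \{h : T_n + n^{-1/2}h\in\mathcal{C}\}$. The hypothesis says exactly that $g_n(h_n')\to\mu'(h)$ whenever $h_n'\in\mathbb{D}_n$ and $h_n'\to h$ uniformly. To invoke the theorem I need $L$ to be separable, or at least tight with separable support. I expect this to be the main technical point. It follows because Theorem~\ref{thm:asymptotik} gives weak convergence to a tight Borel limit in the Hoffmann-J\o rgensen sense, and such limits concentrate on a separable set. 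The conclusion is
\[
\sqrt{n}\bigl(\hat R_\mu^\# - \mu(T_n)\bigr) = g_n(h_n) \weak \mu'(L).
\]
Measurability issues are handled by working with outer expectations throughout.

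Finally, suppose $\mu'$ is linear and continuous. $L$ is a centered Gaussian process, being an integral of the jointly Gaussian $G$ and $G_1$, and its law is tight in $\ell^\infty([0,1]^2)$. A continuous linear functional of a tight centered Gaussian element is a centered normal random variable, since it is a limit of finite linear combinations of coordinates $L(t_i,v_i)$. This gives $\mathcal{N}(0,\sigma^2)$ with $\sigma^2 = \mathrm{Var}(\mu'(L))\ge0$.
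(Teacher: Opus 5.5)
Your proposal is correct and follows the paper's route. The paper simply invokes the functional delta method along the deterministic sequence $T_n$ (Theorem 3.9.5 in \cite{vandervaart_wellner:weak_convergence}), and you have unpacked what that requires: transferring the convergence of Theorem~\ref{thm:asymptotik} to $\widehat{C^{\#\uparrow}}$ via Theorem~\ref{thm:checkerboard} and Slutsky, checking $\widehat{C^{\#\uparrow}}, T_n \in \mathcal{C}$, applying the extended continuous mapping theorem with a tight (hence separable) limit, and using normality of continuous linear functionals of a tight centered Gaussian element.
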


\begin{example} {\rm  A popular dependence measure is Spearman's $\rho$, which defined by
\[
    \rho(C)   =  12\int_0^1\int_0^1 C(t,v)\,\mathrm{d}t\,\mathrm{d}v-3,
\]
and it was shown in \cite{Strothmann.2022} that  the corresponding rearranged dependence measure $R_\rho(C)=\rho(C^\uparrow)$ has the properties  (1) - (3) mentioned in the introduction. 

As the copula dependent part of $\rho$ is linear, the differentiability condition in Corollary~\ref{cor:delta_method} is obviously satisfied with
\[
    \rho'(h)
    =
    12\int_0^1\int_0^1 h(t,v)\,\mathrm{d}t\,\mathrm{d}v,
    \qquad
    h\in\ell^\infty([0,1]^2).
\]
In particular, \(\rho'\) is linear and continuous on
\(\ell^\infty([0,1]^2)\) with respect to the supremum norm.
Consequently, under the assumptions of Theorem~\ref{thm:asymptotik} and independence of
\(X\) and \(Y\), Corollary~\ref{cor:delta_method} yields
$$
    \sqrt{n}\bigl( \hat{R}_\rho^\# -\rho(T_n)\bigr) \xrightarrow{\mathcal D}
    N(0,\sigma_\rho^2).
$$
Observing the definition of $T_n$ in~\eqref{center} and $\rho$ we obtain for the centering term
\begin{align*}
    \rho(T_n)
    &=
    12\sqrt{\frac{N_1}{n}}
    \int_0^1\int_0^1\int_0^t
    \Phi_{v(1-v)}^{-1}(1-u)
    \,\mathrm{d}u\,\mathrm{d}t\,\mathrm{d}v \\
    &= 12\sqrt{\frac{N_1}{n}}
    \Big \{
        \int_0^1\sqrt{v(1-v)}\,\mathrm{d}v
    \Big \}
    \Big\{  \int_0^1 \varphi \big (\Phi^{-1}(1-t) \big ) \,\mathrm{d} t  \Big \} .
\end{align*}
where we have used  the identity  \eqref{center1}. 
The first integral is a Beta integral and given by  
$ \int_0^1\sqrt{v(1-v)}\,\mathrm{d}v
    =\frac{\pi}{8} . 
    $
    For the second integral the substitution $z = \Phi^{-1}(1-t)$ yields 
\[
\int_0^1 \varphi \big ( \Phi^{-1}(1-t) \big ) \mathrm{d}t = \int_{-\infty}^{\infty} \varphi^2(z) \mathrm{d}z =  \frac{1}{2\sqrt{\pi}} , 
\]
which  gives for the centering term 
\[
    \rho (T_n)
    =   \frac{3\sqrt{\pi}}{4}
    \sqrt{\frac{N_1}{n}}. 
    \]
For the calculation of the variance note that 
\[
   \int_0^1  L(t,v) \mathrm{d} t  =  \int_0^1  \int_0^t H(u,v)\,\mathrm{d}u \, \mathrm{d} t  =   \int_0^1 (1-u) H(u,v) \,\mathrm{d}u .
  \]
which gives for the asymptotic variance of the rearranged Spearman's $\rho$ estimator
\[
\begin{aligned}
    \sigma_\rho^2
    &=
    144
    \int_0^1\int_0^1\int_0^1\int_0^1
    {\rm Cov}\bigl(L(t,v),L(s,w)\bigr)
    \,\mathrm{d}t\,\mathrm{d}v\,\mathrm{d}s\,\mathrm{d}w
    \\
    &=
    144
    \int_0^1\int_0^1\int_0^1\int_0^1
    (1-u)(1-r)
    {\rm Cov}\bigl(H(u,v),H(r,w)\bigr)
    \,\mathrm{d}u\,\mathrm{d}v\,\mathrm{d}r\,\mathrm{d}w .
\end{aligned}
\]
}
\end{example}

\section{Proof of Theorem \ref{thm:asymptotik} and Theorem \ref{thm:checkerboard}}
\label{sec:proof_outline}

\subsection{Proof of Theorem~\ref{thm:asymptotik}}

In this section we provide the main proof structure and key steps. Intermediate technical results which are cited throughout this proof are stated and proved in detail in Appendix~\ref{sec:proof-cop}. Since we only use one bandwidth, we will write $N$ instead of $N_1$ throughout this section to simplify notation.

\smallskip
 
Writing $G_n(u,v) = \sqrt{n}(C_n(u,v) - C(u,v))$, observe that
$$
  \widehat{\partial_1 C}(u,v) = \frac{N}{\sqrt{n}}(G_n(k/N, v) - G_n((k-1)/N, v)) + N(C(k/N, v) - C((k-1)/N, v))
$$
for $k-1 \leq uN < k$, $k = 1, \ldots, N$. Under the assumption of independence we have $C(u,v) = uv$, i.e.\@ $N(C(k/N, v) - C((k-1)/N, v)) = v$, and so rearranging the above equality yields
$$
  \sqrt{n}\left(\widehat{\partial_1 C}(u,v) - v\right) = N(G_n(k/N, v) - G_n((k-1)/N, v)).
$$
By Proposition 4.2 in \cite{segers:2012}, we have 
\[
\sup_{u,v\in [0,1]} \big|G_n(u,v) - \widetilde{G}_n(u,v)\big| = o_{a.s.}(n^{-1/4}\log^{3/4} n)
\]
where 
\[
\widetilde{G}_n(u,v) := G^\circ_n(u,v) - vG^\circ_n(u,1) - uG^\circ_n(1,v) 
\]
and $G^\circ_n$ is the empirical process of $(F_1(X_i),F_2(Y_i))_{i \in [n]}$. This implies
\[
\widehat{\partial_1 C}(u,v) = \widetilde{\partial_1 C}(u,v)  + R_{n,1}(u,v)
\]
where 
\[
\widetilde{\partial_1 C}(u,v) := \frac{N}{\sqrt{n}}\big(\widetilde G_n(k/N, v) - \widetilde G_n((k-1)/N, v)\big) + N\big(C(k/N, v) - C((k-1)/N, v)\big)
\]
and
\[
\sup_{u,v\in [0,1]} \big|R_{n,1}(u,v)\big| = o_{a.s.}(Nn^{-3/4}\log^{3/4} n).
\]
By Theorem 1 in \cite{massart1989strong} we can enlarge the probability space and on that enlarged space construct a sequence of centered Gaussian processes $K_n$ with covariance structure $\E[K_n(R)K_n(R')] = \lambda(R \cap R') - \lambda(R) \lambda(R')$ for two rectangles $R,R'$ of the form $[0,u] \times [0,v]$ and bivariate Lebesgue measure $\lambda$ such that
\[
\sup_{u,v\in [0,1]} \big|G_n^\circ(u,v) - K_n(u,v)\big| = \Oc_{\P}(n^{-1/4}\log^{3/2} n). 
\]
Thus, on the enlarged probability space, we obtain 
\begin{equation}\label{eq:linpartialC1}
\widehat{\partial_1 C}(u,v) = \overline{\partial_1 C}(u,v) 
+ R_{n,2}(u,v)
\end{equation}
where 
\begin{align*}
\overline{\partial_1 C}(u,v) &:= \frac{N}{\sqrt{n}}\big(\widetilde K_n(k/N, v) - \widetilde K_n((k-1)/N, v)\big) + N\big(C(k/N, v) - C((k-1)/N, v)\big) 
\\
\widetilde K_n(u,v) &:= K_n(u,v) - vK_n(u,1) - uK_n(1,v) 
\end{align*}
and
\[
\sup_{u,v\in [0,1]} \big|R_{n,2}(u,v)\big| = \Oc_{\P}(Nn^{-3/4}\log^{3/2} n).
\]
Next observe that for an arbitrary bounded function $f: [0,1]^2 \to \R$, with a slight abuse of notation
$$
\mathcal{R}_1\left[\sqrt{n}\left(f(u,v) - v\right)\right] = \sqrt{n}\left(\mathcal{R}_1f (u,v) - v\right).
$$
By Theorem 1 in \cite{AnevskiFougeres2019}, we thus have, again with a slight abuse of notation
\begin{equation}\label{eq:linpartialC2}
\sup_{u,v \in [0,1]}\left|\mathcal{R}_1\left[\sqrt{n}\left(\widehat{\partial_1 C}(u,v) - v\right)\right] - \sqrt{n}\left( \mathcal{R}_1\left(\overline{\partial_1 C}\right)(u,v) - v\right)\right| = o_{a.s.}(Nn^{-1/4}\log^{3/4} n) = o_{a.s}(1)
\end{equation}
by our assumptions on $N$, which also implies that
$$
\sqrt{n}\left(\widehat{C^\uparrow}(t,v) - tv\right) = \sqrt{n}\left(\int_0^t \mathcal{R}_1\left(\overline{\partial_1 C}\right)(u,v) ~\mathrm{d}u - tv\right) + o_{a.s.}(1).
$$
It therefore suffices to prove
\begin{align}
\begin{split}
\label{eq:pfouthelp1}
&\sqrt{n}\left(\int_0^t \mathcal{R}_1\left(\overline{\partial_1 C}\right)(u,v) ~\mathrm{d}u - tv - \sqrt{\frac{N}{n}} \int_0^t \Phi_{v(1-v)}^{-1}(1-u) ~\mathrm{d}u\right)_{(v,t) \in [0,1]^2} \\
&\rightsquigarrow \left(\int_0^t H(u,v) ~\mathrm{d}u\right)_{(v,t) \in [0,1]^2}.
\end{split}
\end{align}

Writing
\begin{equation}\label{eq:defDC}
  D(k,v) = K_n(k/N,v) - K_n((k-1)/N,v) - v \big[K_n(k/N,1) - K_n((k-1)/N,1)\big],
\end{equation}
we see that the process $(u,v) \mapsto \sqrt{n}\left(\overline{\partial_1 C}(u,v) - v\right)$ is equal in distribution to
\begin{equation}
  \label{eq:first_step}
  (u,v) \mapsto N D(k,v) - K_n(1,v) 
\end{equation}
where $k-1 \leq uN < k$, $k = 1, \ldots, N$. Using the specific covariance structure of $K_n$, one can show by elementary but tedious calculations that the process
$$
  v \mapsto \left(D(k, v) - N^{-1} K_n(1,v)\right)_{1 \leq k \leq N}
$$
has the same distribution as
$$
  v \mapsto N^{-1/2} \left(B_k(v) - \overline{B}_{\cdot, N}(v)\right)_{1 \leq k \leq N},
$$
where $B_1, \ldots, B_{N}$ are independent standard Brownian bridges and $\bar{B}_{\cdot, N}$ denotes the arithmetic mean of $B_1, \ldots, B_{N}$; we do this in Lemma \ref{lem:increments_vector} below. Since
$$
\left\{\sqrt{n}\left[\mathcal{R}_1\left(\overline{\partial_1 C}\right) (u,v) - v\right]\right\}_{u,v \in [0,1]} = \left\{\mathcal{R}_1\left[\sqrt{n}\left(\overline{\partial_1 C}(u,v) - v\right)\right]\right\}_{u,v \in [0,1]},
$$
this observation implies that
$$
  \Big\{\sqrt{n}\left(\mathcal{R}_1\left(\overline{\partial_1 C}\right) (u,v) - v\right)\Big\}_{u,v \in [0,1]} \stackrel{\mathcal{D}}{=} \Big\{\sqrt{N}\left(F_{N,v}^{-1}(1-u) - \overline{B}_{\cdot, N}(v)\right)\Big\}_{u,v \in [0,1]},
$$
where $F_{N,v}$ is the empirical distribution function of $B_1(v), \ldots, B_{N}(v)$ and $F_{N,v}^{-1}$ denotes the generalised inverse of $F_{N,v}$. 
\begin{align}
\begin{split}
  \label{eq:quantilreduktion}
  &\Bigg\{\sqrt{n}\Big(\mathcal{R}_1\left(\overline{\partial_1 C}\right) (u,v) - v - \sqrt{\frac{N}{n}} \Phi_{v(1-v)}^{-1}(1-u)\Big)\Bigg\}_{u,v \in [0,1]} \\
  &\stackrel{\mathcal{D}}{=} \Bigg\{\sqrt{N}\left(F_{N,v}^{-1}(1-u) - \Phi_{v(1-v)}^{-1}(1-u)\right) - \sqrt{N} \, \overline{B}_{\cdot, N}(v)\Bigg\}_{u,v \in [0,1]}. 
\end{split}
\end{align}
Consequently,~\eqref{eq:pfouthelp1} follows if we prove that 
\begin{align}\label{eq:pfouthelp2}
\sqrt{N}\left(\int_0^t F_{N,v}^{-1}(1-u) - \Phi_{v(1-v)}^{-1}(1-u) - \overline{B}_{\cdot, N}(v)  ~\mathrm{d}u\right)_{(v,t) \in [0,1]^2} \rightsquigarrow \left(\int_0^t H(u,v) ~\mathrm{d}u\right)_{(v,t) \in [0,1]^2}.
\end{align}

Now, as an intermediate step, consider any i.i.d.\@ sequence $Y_1, \ldots, Y_n$ with a continuous distribution function $F$ and empirical distribution function $F_n$. Then $U_1, \ldots, U_n$ defined by $U_i = F(Y_i)$, $i = 1, \ldots, n$, are i.i.d. distributed uniformly on $[0,1]$. Let us denote the distribution function of $U_1, \ldots, U_n$ by $F_{U,n}$. We obtain
$$
  F_n^{-1}(s) - F^{-1}(s) = F^{-1}\left(F_{U,n}^{-1}(s)\right) - F^{-1}(s),
$$
and an application of Taylor's theorem with the Peano form of the remainder yields
$$
  F_n^{-1}(s) - F^{-1}(s) = \frac{1}{f\circ F^{-1}(s)}\left(F_{U,n}^{-1}(s) - s\right) + \left(\frac{1}{f\circ F^{-1}(s)} - \frac{1}{f\circ F^{-1}(s)}\right)\left(F_{U,n}^{-1}(s) - s\right)
$$
for some $s_n$ between $s$ and $F_{U,n}^{-1}(s)$. The function $f$ is the derivative of $F$, i.e.\@ the corresponding density function.

We can apply this to one part of the right-hand side of Eq.\@ \eqref{eq:quantilreduktion} and obtain
\begin{align}
    \label{eq:quantilreduktion_2}
     \sqrt{N}\left(F_{N,v}^{-1}(u) - \Phi_{v(1-v)}^{-1}(u)\right)                               = \frac{Q_{N}(u,v)}{\varphi\circ\Phi^{-1}(u)} + R_{n,3}(u,v),
\end{align}
where
\[
R_{n,3}(u,v) := \left(\frac{1}{\varphi\circ\Phi^{-1}(\xi_{N,u})} - \frac{1}{\varphi\circ\Phi^{-1}(u)}\right) Q_{N}(u,v),
\]
for some $\xi_{N,u}$ between $u$ and $\tilde{F}_{N,v}^{-1}(u)$, where
$$
  Q_{N}(u,v) = \sqrt{Nv(1-v)} \left\{\tilde{F}_{N,v}^{-1}(u) - u\right\},
$$
and $\tilde{F}_{N,v}$ is the empirical cumulative distribution function of $\Phi_{v(1-v)}(B_i(v))$, $i = 1, \ldots, N$. 

\medskip

Recall that we need to prove~\eqref{eq:pfouthelp2}. Let $a_N = N^{-\eta}$ for some $\eta > 1/2$ which is chosen sufficiently small so that $a_N$ satisfies the technical assumptions of Lemmas \ref{lem:aN_negl} and \ref{lem:uniform_quantiles}. By Lemma \ref{lem:diskretisierung_v} it suffices to consider the process
\[
\sqrt{N}\left(\int_{0}^{t} F_{N,g_{N}(v)}^{-1}(1-u) - \Phi_{g_{N}(v)(1-g_{N}(v))}^{-1}(1-u) - \overline{B}_{\cdot, N}(v)  ~\mathrm{d}u\right)_{(v,t) \in [0,1]^2}.
\]
The function $g_N$ is a certain discretisation of the unit interval, given by
\begin{align*}
g_N(u) := 
\left\{ 
\begin{array}{cc}
  N^{-2} \lceil N^2 u\rceil,  & u \in [0,1/2]  \\
  N^{-2} \lfloor N^2 u\rfloor,  & u \in (1/2,1]
\end{array}
\right.    
\end{align*}
Its value lies in the fact that it allows us to consider finite maxima over $j N^{-2}$, $j = 0, \ldots, N^2$, instead of uncountable suprema over $u, v \in [0,1]$. To apply the cited Lemma \ref{lem:diskretisierung_v}, note that $\int_{0}^t f(1-u) ~\mathrm{d}u = \int_{1-t}^1 f(u) ~\mathrm{d}u = \int_{0}^1f(u) ~\mathrm{d}u - \int_{0}^t f(u) ~\mathrm{d}u$ and apply the triangle inequality.

By Lemma~\ref{lem:aN_negl} we can further restrict our attention to
\[
\sqrt{N}\left(\int_{[a_N,1-a_N]\cap[0,t]}F_{N,g_{N}(v)}^{-1}(1-u) - \Phi_{g_{N}(v)(1-g_{N}(v))}^{-1}(1-u) - \overline{B}_{\cdot, N}(v)  ~\mathrm{d}u\right)_{(v,t) \in [0,1]^2}.
\]
after noting that $\int_0^{a} h(1-u) ~\mathrm{d}u = \int_{1-a}^1 h(u) ~\mathrm{d}u$ and $\int_{1-a}^{1} h(1-u) du = \int_{a}^1 h(u) ~\mathrm{d}u$. Cutting off the domain of integration near the boundaries $0$ and $1$ allows us to ignore the `almost degenerate' behaviour of the integrand processes there. By Lemma \ref{lem:uniform_quantiles},
  \begin{align*}
     & \int_{a_N}^t \sqrt{N}\left(F_{N,g_N(v)}^{-1}(1-u) - \Phi_{g_N(v)(1-g_N(v))}^{-1}(1-u) - \bar{B}_{\cdot, N}(v)\right) ~\mathrm{d}u 
     \\
     & \quad = \int_{a_N}^t \frac{Q_N'(1-u, g_N(v))}{\varphi \circ \Phi^{-1}(1-u)} - \sqrt{N}\bar{B}_{\cdot, N}(v) ~\mathrm{d}u + o_\mathbb{P}(1)
  \end{align*}
  uniformly in $a_N \leq t \leq 1-a_N$ and $0 \leq v \leq 1$, where
  \begin{align*}
  Q_N'(u,v) & = \sqrt{Nv(1-v)} \left\{\tilde{F}_{N,v}(u) - u\right\}, \\
  \tilde{F}_{N,v}(u) &= \frac{1}{N} \sum_{i=1}^N \textbf{1}\left(\Phi_{v(1-v)}(B_i(v)) \leq u\right).
  \end{align*}
This is a Bahadur-Kiefer type approximation, with the important feature that we are allowing the argument $1-u \in [a_N, 1-a_N]$ to get arbitrarily close to $0$ and $1$ for growing $N$, since $a_N \to 0$. Furthermore,
  \begin{align}
    \begin{split}
      \label{eq:gesamtintegral}
      & \int_{a_N}^t \frac{Q_N'(1-u, g_N(v))}{\varphi \circ \Phi^{-1}(1-u)} - \sqrt{N}\bar{B}_{\cdot, N}(v) ~\mathrm{d}u                                                 \\
      & \quad = \int_{a_N}^t \frac{\{u(1-u)\}^\alpha}{\varphi \circ \Phi^{-1}(1-u)} \{u(1-u)\}^{-\alpha} Q_N'(1-u, g_N(v)) - \sqrt{N}\bar{B}_{\cdot, N}(v) ~\mathrm{d}u
    \end{split}
  \end{align}
  for any $\alpha > 0$. Set $\alpha = 1/32$\footnote{Any value $\alpha \in (0,1/32]$ will work for the subsequent arguments.}. By Lemma \ref{lem:konvergenz_QN_prime}, 
  the process $(u,v) \mapsto \{u(1-u)\}^{-\alpha} Q_N'(u, g_N(v))$ converges in distribution to some limiting process $L$. The function
  $$
    u \mapsto \frac{\{u(1-u)\}^{\alpha}}{\varphi \circ \Phi^{-1}(1-u)}
  $$
  is integrable on the unit interval. At this point, we can return to the integral from $0$ to $t$; since $\sup_{u,v \in [0,1]}\{u(1-u)\}^{-\alpha} \big|Q_N'(1-u, g_N(v))\big|$ is bounded in probability by Lemma \ref{lem:konvergenz_QN_prime}. In other words, the right-hand side in Eq.\@ \eqref{eq:gesamtintegral} is equal to
  $$
    \int_{0}^t \frac{\{u(1-u)\}^{\alpha}}{\varphi \circ \Phi^{-1}(1-u)} \{u(1-u)\}^{-\alpha} Q_N'(1-u, g_N(v)) - \sqrt{N}\bar{B}_{\cdot, N}(v) ~\mathrm{d}u + o_\mathbb{P}(1),
  $$
  where the remainder is uniform in $v,t \in [0,1]$. The processes $(u,v) \mapsto \{u(1-u)\}^{-\alpha} Q_N'(1-u,v)$ and $v \mapsto \sqrt{N} \bar{B}_{\cdot, N}(v)$ converge not only individually but also jointly: Since they are certainly jointly asymptotically tight \citep[cf.\@ Lemma 1.4.3 in][]{vandervaart_wellner:weak_convergence} it suffices to show the finite dimensional convergence. But this is easily done by way of a multivariate central limit theorem, noting that the covariance function of $Q_N'$ and $\sqrt{N} \bar{B}_{\cdot,N}$ is equal to that in Eq.\@ \eqref{eq:kovarianzfunktion_2} for all $N$. Finally, the map 
  \begin{align*}
    \ell^\infty([0,1]^2) \times \ell^\infty([0,1]) & \to \ell^\infty([0,1])
    \\
    (f,g)                        & \mapsto \left[(t,v) \mapsto \int_0^t \frac{\{u(1-u)\}^{\alpha}}{\varphi \circ \Phi^{-1}(1-u)} f(1-u,v) - g(v) ~\mathrm{d}u\right]
  \end{align*}
  is continuous, and so the weak convergence in \eqref{eq:pfouthelp2} follows with the continuous mapping theorem. This completes the proof of Theorem~\ref{thm:asymptotik}. \hfill $\Box$

\subsection{Proof of Theorem \ref{thm:checkerboard}}
Suppose that $u \in [j/N_1, (j+1)/N_1)$ and $v \in [k/N_2, (k+1)/N_2)$ and write $\lambda = N_2 v - k$. Then
\begin{align}
  \begin{split}
    \label{eq:checkerboard_bound}
    & \partial_1 C_n^\#(u,v)                                                                                                                                                                                                                                                        \\
    & = N_1 \left\{C_n^\#\left(\frac{j+1}{N_1}, v\right) - C_n^\#\left(\frac{j}{N_1}, v\right)\right\}                                                                                                                                                                       \\
    & = N_1 \left\{\lambda C_n\left(\frac{j+1}{N_1}, \frac{k+1}{N_2}\right) + (1-\lambda)C_n\left(\frac{j+1}{N_1}, \frac{k}{N_2}\right) - \lambda C_n\left(\frac{j}{N_1}, \frac{k+1}{N_2}\right) + (1-\lambda) C_n\left(\frac{j}{N_1}, \frac{k}{N_2}\right)\right\}           \\
    & = N_1\left[ \lambda \left\{ C_n\left(\frac{j+1}{N_1}, \frac{k+1}{N_2}\right) - C_n\left(\frac{j}{N_1}, \frac{k+1}{N_2}\right) \right\} + (1-\lambda) \left\{C_n\left(\frac{j+1}{N_1}, \frac{k}{N_2}\right) -C_n\left(\frac{j}{N_1}, \frac{k}{N_2}\right)\right\}\right] \\
    & = \lambda \widehat{\partial_1 C}\left(u, \frac{k+1}{N_2}\right) + (1-\lambda)\widehat{\partial_1 C}\left(u, \frac{k}{N_2}\right).
  \end{split}
\end{align}
By Theorem 1 in \cite{AnevskiFougeres2019}, the monotone rearrangement is a contraction in the supremum norm on the set of all functions which are bounded on the unit interval. Therefore,
\begin{align}
  \begin{split}
    \label{eq:checkerboard_bound_2}
    \sup_{0 \leq t,v \leq 1}\left|\int_0^t \mathcal{R}_1\left(\partial_1 C_n^\#\right)(s,v) - \mathcal{R}_1\left(\widehat{\partial_1 C}\right) (s,v) ~\mathrm{d}s\right| & \leq \sup_{0 \leq u,v \leq 1}\left|\mathcal{R}_1\left(\partial_1 C_n^\#\right)(u,v) - \mathcal{R}_1\left(\widehat{\partial_1 C}\right) (u,v)\right| \\
    & \leq \sup_{0 \leq u,v \leq 1}\left|\partial_1 C_n^\#(u,v) - \widehat{\partial_1 C} (u,v)\right|                               \\
    & \leq \sup_{0 \leq u \leq 1}\sup_{(v, v') \in V_n}\left|\widehat{\partial_1 C} (u,v) - \widehat{\partial_1 C} (u,v')\right|,
  \end{split}
\end{align}
where the inner supremum in the last line is taken over $V_n := \{(v,v'): 0 \leq v,v' \leq 1, |v-v'| \leq N_2^{-1}\}$. This last inequality is a consequence of Eq.\@ \eqref{eq:checkerboard_bound}.

We now use the fact that for any $0 \leq u,u',v,v' \leq 1$, it holds that
\begin{equation}
    \label{eq:almost_lipschitz_empirical_copula}
  |C_n(u,v) - C_n(u',v')| \leq |u-u'| + |v-v'| + \frac{2}{n}
\end{equation}
almost surely. This can be seen by the following argument: For fixed $v$, the function $u \mapsto C_n(u,v) = C_n(\lfloor nu \rfloor/n,v)$ can only jump at $u = j/n$ by at most $1/n$. Since the number of possible jumps between $\lfloor nu \rfloor/n$ and $\lfloor nu' \rfloor/n$ is at most equal to $|\lfloor nu \rfloor - \lfloor nu' \rfloor|$, we have 
$$
\left|C_n(u,v) - C_n(u',v)\right| = |C_n(\lfloor nu \rfloor/n,v) - C_n(\lfloor nu' \rfloor/n,v)| \leq \frac{|\lfloor nu \rfloor - \lfloor nu' \rfloor|}{n} \leq |u-u'| + \frac{1}{n}.
$$
The same argument works for $v \mapsto C_n(u,v)$ for fixed $u$, thus leading to Eq.\@ \eqref{eq:almost_lipschitz_empirical_copula}.
Now, using the definition of $\widehat{\partial_1 C}$,
$$
\widehat{\partial_1 C}(u,v) = N_1\left[C_n\left(\frac{j+1}{N_1}, v\right) - C_n\left(\frac{j}{N_1}, v\right)\right]
$$
for $j/N_1 \leq u < (j+1)/N_1$, we see that the last line in Eq.\@ \eqref{eq:checkerboard_bound_2} is almost surely bounded by $2N_1/N_2 + 4N_1/n$, which is $o(1/\sqrt{n})$ by assumption. This concludes the proof of Theorem \ref{thm:checkerboard}. \hfill $\Box$

\section{Funding}
The research of Stanislav Volgushev was partially supported by a Discovery Grants from the Natural Sciences and Engineering Research Council of Canada (RGPIN-2024-05528). The work  of Holger Dette and Marius Kroll has been partially supported
by the Deutsche Forschungsgemeinschaft (DFG):
TRR 391 {\it Spatio-temporal Statistics for the Transition of Energy and Transport} (520388526);
Research unit 5381 \textit{Mathematical Statistics in the Information Age} (460867398).

\bibliographystyle{abbrvnat}
\bibliography{rearrangements}

\newpage

\begin{appendix}

\section{Appendix: details on intermediate technical steps and their proofs.} \label{sec:proof-cop}
Recall that for a CDF $F$ we define the generalized inverse by 
\[
F^{-1}(p) := \inf\{x : F(x) \ge p\}, \quad p \in (0,1).
\]
In this section, we extend this definition by setting $F^{-1}(0)$ as the lower bound for the support of $F$ and $F^{-1}(1)$ as the upper bound of the support of $F$. With this definition $F^{-1}(0)$ can take the value $-\infty$ and $F^{-1}(1)$ the value $+\infty$.

Throughout this section, we continue to use the notation introduced in Section \ref{sec:proof_outline}. For ease of notation, we write $N$ instead of $N_1$. We define the function $\psi_2(x) = \exp(x^2) - 1$ and write
$$
  \|X\|_{\psi_2} = \inf\left\{ C > 0 ~\big|~ \mathbb{E}\left[\psi_2\left(\frac{|X|}{C}\right)\right] \leq 1\right\}
$$
for the corresponding Orlicz norm.

\begin{lemma}
  \label{lem:increments_vector}
  The vector-valued process $v \mapsto \left(D_C(k, v) - N^{-1} B_C(1,v)\right)_{1 \leq k \leq N}$ has the same distribution as $v \mapsto N^{-1/2} \left(B_k(v) - \bar{B}_{\cdot, N}(v)\right)_{1 \leq k \leq N}$, where $B_1, \ldots, B_N$ are independent standard Brownian bridges and $\bar{B}_{\cdot, N}$ denotes the arithmetic mean of $B_1, \ldots, B_N$.
\end{lemma}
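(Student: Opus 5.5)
Here $D_C(k,v)$ is the increment $D(k,v)$ from \eqref{eq:defDC}, and $B_C$ denotes the Kiefer-type Gaussian process $K_n$. The plan is to check that both sides are centered Gaussian $\R^N$-valued processes indexed by $v\in[0,1]$. Both are linear functionals of Gaussian processes, so it suffices to match the cross-covariances
\[
\mathrm{Cov}\big(X_k(v),X_l(w)\big),\qquad k,l\in[N],\ v,w\in[0,1],
\]
where $X_k(v):=D(k,v)-N^{-1}K_n(1,v)$.

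\textbf{Right-hand side.} For $Y_k(v):=N^{-1/2}(B_k(v)-\bar B_{\cdot,N}(v))$, independence of the bridges gives
\[
\mathrm{Cov}\big(Y_k(v),Y_l(w)\big)=N^{-1}\big(\min(v,w)-vw\big)\big(\delta_{kl}-N^{-1}\big).
\]
So the target covariance factorizes into the bridge kernel in $(v,w)$ times the projection matrix $\delta_{kl}-1/N$.

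\textbf{Left-hand side.} Write $I_k=((k-1)/N,k/N]$. By linearity of $K_n$ as a set function on rectangles (the covariance $\lambda(R\cap R')-\lambda(R)\lambda(R')$ extends additively to finite unions), we have
\[
D(k,v)=K_n(I_k\times[0,v])-vK_n(I_k\times[0,1]).
\]
Let $\tilde K(R):=K_n(R)+\lambda(R)\,W$, where $W\sim\mathcal N(0,1)$ is independent of $K_n$. Then $\tilde K$ is a white-noise (Brownian sheet) measure with covariance $\lambda(R\cap R')$, and $K_n(R)=\tilde K(R)-\lambda(R)\tilde K([0,1]^2)$. Substituting this, the $\tilde K([0,1]^2)$ contributions cancel in $D(k,v)$, since $\lambda(I_k\times[0,v])-v\lambda(I_k\times[0,1])=0$. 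The same happens in $N^{-1}K_n(1,v)$ after the correction $-vK_n(1,1)$; this term appears in $\widetilde K_n$ and is needed for the cancellation.

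Here I must be careful. The quantity \eqref{eq:first_step} actually comes from $\widetilde K_n(\cdot,v)=K_n-vK_n(\cdot,1)-uK_n(1,v)$, and $K_n(1,1)=0$ a.s. Hence
\[
X_k(v)=\tilde K\big(I_k\times[0,v]\big)-v\,\tilde K\big(I_k\times[0,1]\big)-N^{-1}\Big[\tilde K\big([0,1]\times[0,v]\big)-v\,\tilde K\big([0,1]^2\big)\Big].
\]
Set $\beta_k(v):=\sqrt N\,[\tilde K(I_k\times[0,v])-v\tilde K(I_k\times[0,1])]$. Because the strips $I_k\times[0,1]$ are disjoint, the $\beta_k$ are independent. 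Each $\beta_k$ is a centered Gaussian process with covariance
\[
N\cdot N^{-1}\big(\min(v,w)-vw-vw+vw\big)=\min(v,w)-vw,
\]
so each $\beta_k$ is a standard Brownian bridge. The bracketed term equals $\sum_k\beta_k(v)/\sqrt N$, so
\[
X_k(v)=N^{-1/2}\big(\beta_k(v)-\bar\beta(v)\big),
\]
which identifies the joint law with the right-hand side.

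\textbf{Main obstacle.} The one delicate point is bookkeeping: the exact combination of $K_n(1,v)$ and $K_n(u,1)$ terms must be verified so that every $\lambda(R)W$ contribution cancels. Failing that, the equality can be confirmed directly by expanding the covariance of $X_k$ term by term. Continuity of sample paths in $v$, together with equality of finite-dimensional distributions, then yields equality in law as processes.
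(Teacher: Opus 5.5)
Your proof is correct, but it takes a different route from the paper. The paper writes down the covariance matrix $\Sigma(s,t)$ of the right-hand side. It then asserts that the left-hand side has the same covariance, by ``elementary calculations'' with the $C$-Brownian bridge covariance $C(u\wedge u',v\wedge v')-C(u,v)C(u',v')$, $C(u,v)=uv$. That is a brute-force covariance match between two centered Gaussian processes.

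You instead build an explicit representation. Writing $K_n=\tilde K-\lambda\,\tilde K([0,1]^2)$ with $\tilde K$ a white-noise measure, you show that $\beta_k=\sqrt N\,D(k,\cdot)$ are independent Brownian bridges, because they come from disjoint strips. You identify the centering term $N^{-1}K_n(1,v)$ with $N^{-1/2}\bar\beta(v)$, so that $X_k=N^{-1/2}(\beta_k-\bar\beta)$ holds almost surely. This gives an actual coupling and shows why independent bridges and mean-centering appear, instead of leaving a multi-term covariance expansion to the reader. The paper's version is shorter to state but hides exactly that computation.

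The auxiliary $W$ is dispensable, which also cleans up the bookkeeping you flagged. $\beta_k(v)=\sqrt N\,D(k,v)$ is a functional of $K_n$ alone, since the $W$-terms cancel, as you noted. One has $\mathrm{Cov}(D(k,v),D(l,w))=\delta_{kl}N^{-1}(v\wedge w-vw)$ directly, because $D(k,v)$ integrates the zero-mass step function $\mathbf 1_{I_k\times[0,v]}-v\mathbf 1_{I_k\times[0,1]}$ against $K_n$. Moreover, $K_n(1,v)=\sum_k D(k,v)$ a.s., because $K_n(0,\cdot)=0$ and $K_n(1,1)=0$. So the identity holds on the original probability space, with no enlargement needed.
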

\begin{proof}[Proof of Lemma~\ref{lem:increments_vector}]
  The covariance matrix of $N^{-1/2}(B_k(s) - \bar{B}_{\cdot, N}(s))_{1 \leq k \leq N}$ and $N^{-1/2} (B_k(t) - \bar{B}_{\cdot, N}(t))_{1 \leq k \leq N}$ is
  $$
    \Sigma(s,t) =  \frac{(s \land t - st)}{N}\begin{pmatrix}
      \frac{N-1}{N} & \cdots & - \frac{1}{N} \\ \vdots & \ddots & \vdots \\ -\frac{1}{N} & \cdots & \frac{N-1}{N}
    \end{pmatrix}.
  $$
  The claim follows by elementary calculations, using the fact that $B_C$ is a $C$-Brownian bridge, i.e.\@ it has covariance function
  \begin{align*}
  &\mathrm{Cov}[B_C(u,v), B_C(u',v')] \\
  &= \mathbb{P}_C\left(\textbf{1}_{(-\infty,u]\times (-\infty,v]}\textbf{1}_{(-\infty,u']\times (-\infty,v']}\right) - \mathbb{P}_C \textbf{1}_{(-\infty,u]\times (-\infty,v]} \mathbb{P}_C \textbf{1}_{(-\infty,u']\times (-\infty,v']} \\
  &= C(u\land u', v \land v') - C(u,v)C(u',v'),
  \end{align*}
  and $C(u,v) = uv$.
\end{proof}

\begin{lemma}
  \label{lem:brownian_increments_orlicz}
Let $W$ be a standard Brownian motion and $B$ a standard Brownian bridge on the unit interval. Then there is a uniform constant $C_0$ such that for any $0 < \delta \leq 1$ and any $p > 0$,
  $$
    \left\|\sup_{|s-t| \leq \delta} |W(s) - W(t)|\right\|_{\psi_2} + \left\|\sup_{|s-t| \leq \delta} |B(s) - B(t)|\right\|_{\psi_2}  \leq C_0 \left\{\int_0^{\delta^p} \sqrt{- \log x} ~\mathrm{d}x + \sqrt{- 2p \delta \log \delta}\right\}.
  $$
  For $p = 1/2$, the second summand can be absorbed into the integral to yield the bound
  $$
    C_0 \int_0^{\sqrt{\delta}} \sqrt{- \log x} ~\mathrm{d}x,
  $$
which in turn can be bounded by $c_r \delta^{r/2}$ for any $r < 1$, where $c_r$ is a constant depending only on $r$.
\end{lemma}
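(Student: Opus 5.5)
The plan is to apply Dudley's entropy bound for Orlicz norms \citep[Theorem 2.2.4 and Corollary 2.2.5 in][]{vandervaart_wellner:weak_convergence}, treating $W$ and $B$ separately and adding the two bounds via the triangle inequality for $\|\cdot\|_{\psi_2}$. For a centered Gaussian process $X$ on $[0,1]$, the increments satisfy $\|X(s)-X(t)\|_{\psi_2}\le c\, d(s,t)$, where $d(s,t)=(\mathbb{E}|X(s)-X(t)|^2)^{1/2}$ is the intrinsic standard-deviation metric. For both $W$ and $B$ we have $d(s,t)\le |s-t|^{1/2}$, since $\mathbb{E}|B(s)-B(t)|^2=|s-t|-|s-t|^2$. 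So both processes are sub-Gaussian with respect to $d(s,t)=\sqrt{|s-t|}$. The $\delta$-ball for $|s-t|$ becomes an $\eta=\sqrt\delta$-ball for $d$, and the covering numbers are $D(\epsilon,d)\lesssim \epsilon^{-2}$.

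The corollary gives
\[
\Big\|\sup_{d(s,t)\le\eta}|X(s)-X(t)|\Big\|_{\psi_2}\le K\Big[\int_0^{\eta}\sqrt{\psi_2^{-1}(D(\epsilon,d))}\,\mathrm d\epsilon+\eta\,\psi_2^{-1}(D^2(\eta,d))\Big].
\]
Here $\psi_2^{-1}(x)=\sqrt{\log(1+x)}$, so $\sqrt{\psi_2^{-1}(\epsilon^{-2})}$ must be handled correctly. In the theorem the integrand is actually $\psi^{-1}(D(\epsilon))$ rather than its square root, which gives $\int_0^{\sqrt\delta}\sqrt{\log(1+\epsilon^{-2})}\,\mathrm d\epsilon\lesssim\int_0^{\sqrt\delta}\sqrt{-\log x}\,\mathrm dx$ for $\delta\le1$ (with minor adjustment near $x=1$, absorbed into $C_0$). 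The second term is $\sqrt\delta\sqrt{\log(1+\delta^{-2})}\lesssim\sqrt{-\delta\log\delta}$.

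To obtain the general parameter $p$, I would split the chaining at scale $\delta^{p}$. For $x\le\delta^p$, the entropy integral is taken up to $\delta^p$. Above that scale, the supremum over a $\delta^p$-net involves at most about $\delta^{-p}$-many increments of variance at most $\delta$. A maximal inequality for finitely many sub-Gaussians then gives $\sqrt{\delta}\sqrt{\log(\delta^{-2p})}=\sqrt{-2p\,\delta\log\delta}$ up to constants. This produces exactly the two summands. I would implement it via Lemma 2.2.2 plus the chaining argument of Theorem 2.2.4, with the chain stopped at the $\delta^p$ resolution. This bookkeeping, where the net is taken in the original metric $|s-t|$ so that the entropy variable matches $x$ up to $\delta^p$, is the step I expect to be most delicate to state cleanly.

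For $p=1/2$, the term $\sqrt{-\delta\log\delta}$ is at most a constant times $\int_0^{\sqrt\delta}\sqrt{-\log x}\,\mathrm dx$. This holds because the integrand is decreasing, so the integral is at least $\sqrt\delta\sqrt{-\log\sqrt\delta}=\sqrt{-\delta\log\delta/2}$. Finally, for $r<1$ write $\sqrt{-\log x}\le c_r x^{-(1-r)}$ for all $x\in(0,1]$. Integrating gives $c_r'\,\delta^{r/2}$, since $\int_0^{\sqrt\delta}x^{r-1}\,\mathrm dx=\delta^{r/2}/r$, which is the final bound.
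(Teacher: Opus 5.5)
Your proposal is correct and follows essentially the paper's route: sub-Gaussian increments with respect to $\sqrt{|s-t|}$, covering numbers of order $x^{-2}$, the chaining bound of Theorem 2.2.4 in van der Vaart and Wellner, and the same elementary estimates for the case $p=1/2$ and for the $\delta^{r/2}$ bound. The step you call delicate is not needed: Theorem 2.2.4 already has a cut-off $\eta$ that is separate from the ball radius, so the paper just sets $\eta=\delta^p$ in the intrinsic metric $\sqrt{|s-t|}$ instead of re-running a stopped chain. Taking the net at resolution $\delta^p$ in $|s-t|$, as you propose, would put $\delta^{p/2}$ as the upper limit of the integral; this is harmless only after replacing $p$ by $2p$, at the cost of a constant in the second summand.
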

\begin{proof}[Proof of Lemma~\ref{lem:brownian_increments_orlicz}] 
Throughout the proof, $C$ is a positive constant that may change from line to line. Any Gaussian process $X$ is sub-Gaussian with respect to its own standard deviation semimetric $\rho(s,t) = \sqrt{\mathrm{Var}(X(s) - X(t))}$ and satisfies $\|X(s) - X(t)\|_{\psi_2} \leq \sqrt{6} \rho(s,t)$ \citep[p.\@ 101 in][]{vandervaart_wellner:weak_convergence}. The standard deviation metric for a Brownian motion is $\rho_{\mathrm{BM}}(s,t) = \sqrt{\mathrm{Var}(W(s) - W(t))} = \sqrt{|s-t|}$, and the covering number $N(x, \rho_{\mathrm{BM}})$ of the unit interval with respect to this metric is $ \lesssim x^{-2}$. By Theorem 2.2.4 in \cite{vandervaart_wellner:weak_convergence} {and the paragraph just before that theorem}, there exists some constant $K$ such that 
  \begin{align*}
    \left\|\sup_{\rho_{\mathrm{BM}}(s,t) \leq \sqrt{\delta}} |W(s) - W(t)|\right\|_{\psi_2} & \leq K \left\{\int_0^{\eta} \sqrt{\log(N(x/2, \rho_{\mathrm{BM}})+1)} ~\mathrm{d}x + \sqrt{\delta} \sqrt{2 \log( N(\eta/2, \rho_{\mathrm{BM}})+1)}\right\} 
    \\
 & \lesssim \left\{\int_0^{\eta} \sqrt{- \log x} ~\mathrm{d}x + \sqrt{-2 \delta \log \eta}\right\}
  \end{align*}
  for any $\eta \in (0,1]$. In the second line we have used the fact that $\log(x+1) \leq 2 \log x$ for all $x \geq 2$ and $N(\eta/2,\rho_{\mathrm{BM}}) \geq 2$. If we choose $\eta = \delta^p$, we get the result for the Brownian motion, since $\rho_{BM}(s,t) = \sqrt{|s-t|} \leq \sqrt{\delta}$ if and only if $|s-t| \leq \delta$. The proof for the Brownian bridge is mostly the same, noting that in this case the standard deviation metric is $\rho_{\mathrm{BB}}(s,t) = \sqrt{|s-t|(1-|s-t|)} \leq \rho_{BM}(s,t)$. This implies that $N(x, \rho_{\mathrm{BB}}) \leq N(x, \rho_{\mathrm{BM}})$, and from here we can proceed as before.

 To prove the bound in terms of $\delta^{r/2}$, just note that
$$
\int_0^{\sqrt{\delta}} \sqrt{\log \frac{1}{x}} ~\mathrm{d}x \lesssim \int_0^{\sqrt{\delta}} x^{-s/2} ~\mathrm{d}x \lesssim \delta^{1/2 - s/4}
$$
for all $0 < s < 2$, where $\lesssim$ is hiding constants depending only on $s$. By choosing $s$ sufficiently small, this last term can be made equal to $\delta^{r/2}$ for any $r < 1$.
\end{proof}

\begin{lemma}
  \label{lem:aN_negl} Recall the definition of $F_{N,v}$ above equation \eqref{eq:quantilreduktion} and assume $N \to \infty$. If $a_N = o\left(N^{-1/2} (\log N)^{-1}\right)$, then
  \begin{align*}
     & \sup_{v \in [0,1]} \int_0^{a_N} \sqrt{N}\left(\big|F_{N,v}^{-1}(u)\big| + \big|\overline{B}_{\cdot, N}(v)\big| + \big|\Phi_{v(1-v)}^{-1}(u)\big|\right) ~\mathrm{d}u           \\
     & \quad + \sup_{v \in [0,1]} \int_{1-a_N}^1 \sqrt{N}\left(\big|F_{N,v}^{-1}(u)\big| + \big|\overline{B}_{\cdot, N}(v)\big| + \big|\Phi_{v(1-v)}^{-1}(u)\big|\right) ~\mathrm{d}u  = o_\P(1).
  \end{align*} 
\end{lemma}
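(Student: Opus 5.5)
The plan is to bound each of the three summands in the integrand separately, uniformly in $v$. I treat only the integral over $[0,a_N]$; the one over $[1-a_N,1]$ is handled identically by symmetry. Since the integrals ignore the single point $u=0$, where $F_{N,v}^{-1}$ may be infinite, I can use that for $u\in(0,1)$ the generalised inverse of an empirical distribution function is one of the sample points. This gives
$$
\big|F_{N,v}^{-1}(u)\big| \le \max_{1\le i\le N} |B_i(v)| \le M_N := \max_{1\le i\le N}\sup_{v\in[0,1]}|B_i(v)|
$$
for all $u\in(0,1)$ and $v\in[0,1]$. Hence the first summand is bounded by $\sqrt{N}\,a_N M_N$.

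\emph{Step 1 (the random maximum).} I will show $M_N=\Oc_\P(\sqrt{\log N})$. This uses the classical tail bound $\P(\sup_v|B(v)|>x)\le 2e^{-2x^2}$ for a standard Brownian bridge. Alternatively, it follows from the $\psi_2$-bound of Lemma~\ref{lem:brownian_increments_orlicz} with $\delta=1$, noting $B(0)=0$. A union bound over the $N$ independent bridges then gives
$$
\P\big(M_N>\sqrt{\log N}\,x\big)\le 2N^{1-2x^2}\to 0 \quad\text{for } x>1/\sqrt2 .
$$
Consequently $\sqrt{N}a_N M_N=\Oc_\P\big(\sqrt N a_N\sqrt{\log N}\big)=o_\P\big((\log N)^{-1/2}\big)=o_\P(1)$ by the assumption on $a_N$. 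This step is where the rate condition $a_N=o(N^{-1/2}(\log N)^{-1})$ is really used, and I expect it to be the main (though mild) obstacle: the bound must be uniform in $v$, so a pointwise Gaussian maximum bound is not enough and the sup over $v$ must be inside the union bound.

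\emph{Step 2 (the mean).} Since $\sqrt N\,\overline{B}_{\cdot,N}$ is itself a standard Brownian bridge, $\sup_v\sqrt N|\overline B_{\cdot,N}(v)|=\Oc_\P(1)$. Therefore
$$
\sup_v\int_0^{a_N}\sqrt N|\overline B_{\cdot,N}(v)|\,\mathrm du\le a_N\,\Oc_\P(1)=o_\P(1).
$$

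\emph{Step 3 (the deterministic term).} Using $\Phi_{v(1-v)}^{-1}=\sqrt{v(1-v)}\,\Phi^{-1}$ with $\sqrt{v(1-v)}\le 1/2$, and the identity $\int_0^a|\Phi^{-1}(u)|\,\mathrm du=\varphi(\Phi^{-1}(a))$ for $a<1/2$ (as in \eqref{center1}), the third summand is at most $\tfrac12\sqrt N\,\varphi(\Phi^{-1}(a_N))$. By the Mills-ratio asymptotics, $\varphi(\Phi^{-1}(a))\sim a\sqrt{2\log(1/a)}$ as $a\to0$, and this function is increasing for small $a$. So I may replace $a_N$ by the larger sequence $b_N=\epsilon_N N^{-1/2}(\log N)^{-1}$ with $\epsilon_N\to0$ and $\epsilon_N\geq\sqrt{N}(\log N)\,a_N$. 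This yields the bound
$$
\sqrt N\, b_N\sqrt{\log(1/b_N)}\lesssim \epsilon_N(\log N)^{-1/2}\to0 .
$$
Combining Steps 1–3 and the symmetric argument on $[1-a_N,1]$ proves the lemma.
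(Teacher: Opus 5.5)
Your proof is correct and follows essentially the same route as the paper: the same three-term split, with the key input $\max_i\sup_v|B_i(v)|=\Oc_\P(\sqrt{\log N})$ obtained from Kolmogorov's tail bound plus a union bound rather than the paper's $\psi_2$ maximal inequality. Your treatment of $\overline B_{\cdot,N}$ (using that $\sqrt N\,\overline B_{\cdot,N}$ is a Brownian bridge) and of the $\Phi^{-1}$ term (via the exact identity $\int_0^a|\Phi^{-1}(u)|\,\mathrm{d}u=\varphi(\Phi^{-1}(a))$ and Mills' ratio, instead of the paper's cruder bound $\sqrt{-\log u}\le-\log u$) is slightly sharper. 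It shows that $a_N=o((N\log N)^{-1/2})$ already suffices, so, contrary to your remark, the full $(\log N)^{-1}$ rate is not actually needed in your Step 1.
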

\begin{proof}[Proof of Lemma~\ref{lem:aN_negl}]
  By Lemma \ref{lem:brownian_increments_orlicz}, there is a constant $K > 0$ such that
  $$
    \left\|\sup_{0 \leq s \leq 1} |B(s)| \right\|_{\psi_2} \leq \left\|\sup_{0 \leq s,t \leq 1} |B(s) - B(t)| \right\|_{\psi_2} \leq K \int_0^1 \sqrt{-\log x} ~\mathrm{d}x < \infty.
  $$
  If $B_1, \ldots, B_N$ are $N$ standard Brownian bridges, this implies
  $$
    \left\|\max_{i = 1, \ldots , N} \sup_{0 \leq v \leq 1} |B_i(v)| \right\|_{\psi_2} \leq K_1 \sqrt{\log (N + 1)}
  $$
  for another constant $K_1 > 0$ by Lemma 2.2.2 in \cite{vandervaart_wellner:weak_convergence}, and so
  $$
    \sup_{0 \leq u,v \leq 1} \left|F_{N,v}^{-1}(u)\right| \leq \max_{i = 1, \ldots , N} \sup_{0 \leq v \leq 1} |B_i(v)|  = \mathcal{O}_\mathbb{P}\left(\sqrt{\log N}\right).
  $$
  Because $a_N = o\left((N \log N)^{-1/2}\right)$ this also means that
  $$
    \sup_{0 \leq v \leq 1} \left\{\int_0^{a_N} \left|F_{N,v}^{-1}(u)\right| ~\mathrm{d}u + \int_{1 - a_N}^1 \left|F_{N,v}^{-1}(u)\right| ~\mathrm{d}u \right\}= o_\mathbb{P}\left(N^{-1/2}\right)
  $$
  as well as
  $$
    \sup_{0 \leq v \leq 1} \left\{\int_0^{a_N} \left|\bar{B}_{\cdot, N}(u)\right| ~\mathrm{d}u + \int_{1-a_N}^1 \left|\bar{B}_{\cdot, N}(u)\right| ~\mathrm{d}u\right\} = o_\mathbb{P}\left(N^{-1/2}\right).
  $$
  By identity [3.9.1] in \cite{patel_read:handbook_normal}, we have $\Phi^{-1}(1-x) \leq \sqrt{-2 \log x} + e(x)$ for $0 < x \leq 1/2$, where $e(x)$ is an error function with $|e(x)| < 0.003$ for all $x \leq 1/2$. Since $\Phi^{-1}_{v(1-v)}(u) = \sqrt{v(1-v)}\Phi^{-1}(u) = -\sqrt{v(1-v)} \Phi^{-1}(1-u)$ for all $v \in (0,1)$, we obtain for sufficiently large $N$
  \begin{align*}
    \sup_{0 \leq v \leq 1} \int_0^{a_N} \left|\Phi_{v(1-v)}^{-1}(u)\right| ~\mathrm{d}u & \leq c \int_0^{a_N} \sqrt{- \log u} + |e(u)| ~\mathrm{d}u \\
     & \leq c \int_0^{a_N} - \log u ~\mathrm{d}u + \mathcal{O}(a_N) 
  \end{align*}
  for some constant $c > 0$. Writing $b_N = a_N \lor N^{-1}$, we can further bound the integral by 
  $$
  \int_{0}^{a_N} -\log u ~\mathrm{d}u \leq \int_{0}^{b_N} -\log u ~\mathrm{d}u = b_N + b_N \log\frac{1}{b_N} \leq b_N + b_N\log N = o\left(N^{-1/2}\right).
  $$
  Therefore,
  $$
  \sup_{0 \leq v \leq 1} \int_0^{a_N} \left|\Phi_{v(1-v)}^{-1}(u)\right| ~\mathrm{d}u = \mathcal{O}(a_N) + o\left(N^{-1/2}\right) = o\left(N^{-1/2}\right).
  $$
  A similar argument works for the upper tail, and so
  $$
    \int_0^{a_N} \sqrt{N}\left(F_{N,v}^{-1}(u) - \bar{B}_{\cdot, N}(v) - \Phi_{v(1-v)}^{-1}(u)\right) ~\mathrm{d}u = o_\mathbb{P}(1).
  $$
  and
  $$
    \int_{1-a_N}^1 \sqrt{N}\left(F_{N,v}^{-1}(u) - \bar{B}_{\cdot, N}(v) - \Phi_{v(1-v)}^{-1}(u)\right) ~\mathrm{d}u = o_\mathbb{P}(1).
  $$
\end{proof}

We now introduce a discretisation of the arguments of our processes. For each $N$, let
\begin{align*}
g_N(u) := 
\left\{ 
\begin{array}{cc}
  N^{-2} \lceil N^2 u\rceil,  & u \in [0,1/2]  \\
  N^{-2} \lfloor N^2 u\rfloor,  & u \in (1/2,1]
\end{array}
\right.    
\end{align*}

In what follows, we will without loss of generality assume that $N$ is even. If $N$ is odd, replace $N^2$ in the definition of $g_N$ by $4N^2$ below and the arguments go through in the same way.

The function $g_N$ maps the unit interval to the points $j N^{-2}$, $j = 0, \ldots, N^2$. Although it is purely notation, we will denote these points by $u_j$ or $v_j$, depending on whether we are considering $g_N(u)$ or $g_N(v)$. Thus, we may write e.g.\@ `for all $v_{j-1} \leq v \leq v_j$ and $u_{j-1} \leq u \leq u_j$', when in fact $u_j = v_j = j N^{-2}$ for all $j = 0, \ldots, N^2$.
\begin{lemma}
  \label{lem:diskretisierung_v}
Assume $N \to \infty.$ The random variable
  \[
      \sup_{0 \leq t \leq 1} \sup_{v \in [0,1]}\sqrt{N}\left|\int_0^t F_{N,v}^{-1}(u) - \Phi_{v(1-v)}^{-1}(u) ~\mathrm{d}u - \int_0^t F_{N,g_N(v)}^{-1}(u) - \Phi_{g_N(v)(1-g_N(v))}^{-1}(u) ~\mathrm{d}u\right|
  \]
  converges to $0$ in probability.
\end{lemma}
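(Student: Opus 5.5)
The plan is to bound the two differences separately: the empirical quantile part and the deterministic normal-quantile part. Throughout, $|v-g_N(v)|\le N^{-2}$. By construction, $g_N(v)$ lies on the same side of $1/2$ as $v$ and closer to $1/2$. Hence $g_N(v)(1-g_N(v))\ge v(1-v)$.

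\emph{Empirical part.} The key observation is that the rearrangement is a sup-norm contraction. This also holds for generalised inverses of empirical distribution functions with $N$ atoms: if $x_1,\dots,x_N$ and $y_1,\dots,y_N$ are two samples, then $\sup_u|F_x^{-1}(u)-F_y^{-1}(u)|\le\max_i|x_i-y_i|$, because the $k$-th order statistics differ by at most $\max_i|x_i-y_i|$. Applying this with $x_i=B_i(v)$ and $y_i=B_i(g_N(v))$ gives
\[
\sup_{t}\sup_v\Big|\int_0^t F_{N,v}^{-1}(u)-F_{N,g_N(v)}^{-1}(u)\,\mathrm{d}u\Big|\le \max_{i\le N}\sup_{|s-s'|\le N^{-2}}|B_i(s)-B_i(s')|.
\]
We bound this using Lemma~\ref{lem:brownian_increments_orlicz} with $\delta=N^{-2}$ and some $r<1$. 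Each modulus has $\psi_2$-norm at most $c_rN^{-r}$. By the maximal inequality (Lemma 2.2.2 in \cite{vandervaart_wellner:weak_convergence}), the maximum over $N$ bridges has $\psi_2$-norm at most $c_r N^{-r}\sqrt{\log(N+1)}$. Choosing, say, $r=3/4$ and multiplying by $\sqrt N$, we get $\Oc_\P(N^{-1/4}\sqrt{\log N})=o_\P(1)$.

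\emph{Deterministic part.} Set $\sigma(v)=\sqrt{v(1-v)}$, so that $\Phi_{v(1-v)}^{-1}(u)=\sigma(v)\Phi^{-1}(u)$. The difference is then
\[
\big(\sigma(v)-\sigma(g_N(v))\big)\int_0^t\Phi^{-1}(u)\,\mathrm{d}u .
\]
The integral is bounded in absolute value by $\int_0^1|\Phi^{-1}|<\infty$; indeed it equals $-\varphi(\Phi^{-1}(t))$, as in \eqref{center1}. The function $\sigma$ is Hölder-$1/2$ on $[0,1]$, since $|\sqrt a-\sqrt b|\le\sqrt{|a-b|}$ and $|v(1-v)-w(1-w)|\le|v-w|$. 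Hence $|\sigma(v)-\sigma(g_N(v))|\le N^{-1}$. After multiplying by $\sqrt N$, this part is $\Oc(N^{-1/2})\to0$ uniformly in $t,v$.

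Combining the two parts with the triangle inequality gives the claim. The endpoints $v\in\{0,1\}$ are fixed by $g_N$, so they are harmless. The only step requiring any care is the order-statistic contraction together with the maximal inequality. There the key point is that the Brownian modulus over a window of length $N^{-2}$ is of order $N^{-1}\sqrt{\log N}$. This beats the $\sqrt N$ scaling even after paying $\sqrt{\log N}$ for the maximum over $N$ bridges.
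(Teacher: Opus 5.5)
Your proof is correct and follows essentially the paper's route. You use the same split into the empirical and normal-quantile parts, the sup-norm contraction reducing the empirical part to the Brownian modulus over windows of length $N^{-2}$, Lemma~\ref{lem:brownian_increments_orlicz} together with the $\psi_2$-maximal inequality, and an $N^{-1}$ bound on $|\sqrt{v(1-v)}-\sqrt{g_N(v)(1-g_N(v))}|$; the only differences are that you obtain this bound from global H\"older-$1/2$ continuity instead of the paper's monotonicity/concavity argument on $(0,1/2]$, and you rightly drop the paper's maximum over grid points $j$, which is redundant because the per-$j$ bound is already the global modulus. Your aside that $g_N(v)(1-g_N(v))\ge v(1-v)$ holds only for even $N$ (as the paper assumes), but it is never used.
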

\begin{proof}[Proof of Lemma~\ref{lem:diskretisierung_v}]
If $v \in \{0,1\}$, we have $g_N(v) = v$ and so it suffices to consider the supremum over $v \in (0,1)$. In what follows, we consider the case $v \in (0,1/2]$, so that $v \in (v_{j-1},v_j]$ implies $g_N(v) = v_j$. The case $v \in (1/2,1)$ can be treated similarly.

For $v \in (0,1/2]$, $\Phi_{v(1-v)}^{-1} = \sqrt{v(1-v)}\,\Phi^{-1}$ and so,
  \begin{equation}
    \label{eq:hölder_eq_bound}
    \sqrt{N}\left|\int_0^t \Phi_{v(1-v)}^{-1}(u) - \Phi_{v_j(1-v_j)}^{-1}(u) ~\mathrm{d}u\right| = \sqrt{N}\left|\left\{\sqrt{v(1-v)} - \sqrt{v_j(1-v_j)}\right\} \int_0^t \Phi^{-1}(u) ~\mathrm{d}u\right|.
  \end{equation}
  Now $\int_0^t \Phi^{-1}(u) ~\mathrm{d}u$ is bounded absolutely by some constant uniformly with respect to  $t$. Note that the derivative of $x \mapsto \sqrt{x(1-x)}$ is decreasing on $(0,1/2)$ and the function is itself increasing, so for $v_{j-1} < v \le v_j \leq 1/2$
  \[
  \left|\sqrt{v(1-v)} - \sqrt{v_j(1-v_j)}\right| \leq \left|\sqrt{v_{j-1}(1-v_{j-1})} - \sqrt{v_j(1-v_j)}\right| \leq \sqrt{v_1(1-v_1)}\leq 1/N.
  \]
  Since
  $$
  \left|\int_0^t \Phi^{-1}(u) ~\mathrm{d}u\right| \leq \int_0^t |\Phi^{-1}(u)| ~\mathrm{d}u \leq \int_0^1 |\Phi^{-1}(u)| ~\mathrm{d}u = \mathbb{E} |\Phi^{-1}(U)| = \sqrt{\frac{2}{\pi}},
  $$
  for all $0 \leq t \leq 1$, we see from Eq.\@ \eqref{eq:hölder_eq_bound} that
  \begin{equation}
    \label{eq:diskretisierung_v_1}
    \sup_{0 \leq t \leq 1} \max_{j = 1, \ldots, N^2/2} \sup_{v_{j-1} < v \leq v_j} \sqrt{N} \left|\int_0^t \Phi_{v(1-v)}^{-1}(u) - \Phi_{v_j(1-v_j)}^{-1}(u) ~\mathrm{d}u\right| = \mathcal{O}\left(\frac{1}{\sqrt{N}}\right).
  \end{equation}
  It remains to consider the difference
  $$
    \sup_{0 \leq t \leq 1} \max_{j = 1, \ldots, N^2/2} \sup_{v_{j-1} < v \leq v_j} \left|\int_0^t F_{N,v}^{-1}(u) - F_{N,v_j}^{-1}(u) ~\mathrm{d}u\right|.
  $$
  For any fixed $j$, we can bound the difference of the quantile functions by
  $$
    \sup_{0 \leq u \leq 1} \left| F_{N,v}^{-1}(u) - F_{N,v_j}^{-1}(u) \right| \leq \max_{i = 1, \ldots, N} |B_i(v) - B_i(v_j)|.
  $$
  The most direct way to see this is by using the contraction property of the monotone rearrangement operator with respect to the supremum norm; see Theorem 1 in \cite{AnevskiFougeres2019}. Therefore,
  $$
    \max_{j = 1, \ldots, N^2/2} \sup_{v_{j-1} < v \leq v_j}\sup_{0 \leq u \leq 1} \left| F_{N,v}^{-1}(u) - F_{N,v_j}^{-1}(u) \right| \leq  \max_{i = 1, \ldots, N} \max_{j = 1, \ldots, N^2/2} \sup_{v_{j-1} < v \leq v_j} |B_i(v) - B_i(v_j)|.
  $$
By Lemma \ref{lem:brownian_increments_orlicz}, we have
  $$
    \left\|\sup_{v_{j-1} < v \leq v_j} |B_i(v) - B_i(v_j)|\right\|_{\psi_2} \leq \left\|\sup_{|v-v'| \leq N^{-2}} |B_i(v) - B_i(v')|\right\|_{\psi_2} \leq c_\gamma N^{-\gamma}
  $$
  for any $\gamma < 1$ and constants $c_\gamma > 0$. Therefore, by applying Lemma 2.2.2 in \cite{vandervaart_wellner:weak_convergence} to the maxima over $i$ and $j$ simultaneously, we obtain
  $$
    \left\|\max_{i = 1, \ldots, N} \max_{j = 1, \ldots, N^2/2} \sup_{v_{j-1} < v \leq v_j} |B_i(v) - B_i(v_j)|\right\|_{\psi_2} \leq K \frac{\sqrt{ \log N}}{N^{\gamma}}
  $$
  for some constant $K = K(\gamma)> 0$, and this upper bound is $o(N^{-1/2})$ if $\gamma > 1/2$. Thus,
  \begin{align}
    \begin{split}
      \label{eq:QNprime_v_diskret}
      & \sup_{0 \leq t \leq 1} \max_{j = 1, \ldots, N^2/2} \sup_{v_{j-1} < v \leq v_j} \left|\int_0^t F_{N,v}^{-1}(u) - F_{N,v_j}^{-1}(u) ~\mathrm{d}u\right| \\
      & \quad \leq \max_{i = 1, \ldots, N} \max_{j = 1, \ldots, N^2/2} \sup_{v_{j-1} < v \leq v_j} |B_i(v) - B_i(v_j)|                                      \\
      & \quad = o_\mathbb{P}\left(\frac{1}{\sqrt{N}}\right).
    \end{split}
  \end{align}
  Together with similar arguments for $v > 1/2$ and Eq.\@ \eqref{eq:diskretisierung_v_1} proves our claim.
\end{proof}

\begin{lemma}
  \label{lem:gaussian_lemma}
  Let $(Z,Z')$ be jointly normal with be standard normal margins and arbitrary correlation and write $\Delta = Z' - Z$, $\sigma^2 = \mathrm{Var}(\Delta)$. If $0 < \sigma^2 \leq 2$, then there is a universal constant $c$ such that
  $$
    \mathbb{P}(Z \in [s - |\Delta|, s + |\Delta|]) \leq c \,\frac{\sigma \varphi(s/5)}{\sqrt{1 - \sigma^2/4}}
  $$
  for all $s \in \mathbb{R}$ where $\varphi$ denotes the standard normal density.
\end{lemma}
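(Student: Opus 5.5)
The plan is to decouple $Z$ from the increment $\Delta$ by a Gaussian regression, so that the probability becomes an expectation, over $\Delta$ alone, of the mass that an independent normal variable puts on an interval of length $2|\Delta|$.

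\textbf{Step 1: the decomposition.} Let $\rho$ be the correlation of $(Z,Z')$. Then
\[
\sigma^2=\mathrm{Var}(Z'-Z)=2-2\rho,
\qquad
\mathrm{Cov}(Z,\Delta)=\rho-1=-\sigma^2/2 .
\]
Since $(Z,\Delta)$ is jointly normal, we can write
\[
Z=-\tfrac12\Delta+W,
\]
where $W$ is centered normal and independent of $\Delta\sim\mathcal N(0,\sigma^2)$. Its variance is
\[
\tau^2:=\mathrm{Var}(W)=1-\sigma^2/4\in[1/2,1),
\]
using $0<\sigma^2\le 2$. Conditioning on $\Delta$ gives
\[
\mathbb P\bigl(Z\in[s-|\Delta|,s+|\Delta|]\bigr)
=\mathbb E\Bigl[\mathbb P\bigl(W\in I_\Delta\mid\Delta\bigr)\Bigr],
\qquad
I_\Delta=\bigl[s+\tfrac12\Delta-|\Delta|,\;s+\tfrac12\Delta+|\Delta|\bigr].
\]
The interval $I_\Delta$ has length $2|\Delta|$ and is contained in $[s-\tfrac32|\Delta|,\,s+\tfrac32|\Delta|]$. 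Hence the conditional probability is at most
\[
2|\Delta|\,\sup_{x\in I_\Delta}\tau^{-1}\varphi(x/\tau).
\]

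\textbf{Step 2: split according to the size of $|\Delta|$ relative to $|s|$.}
\begin{itemize}
\item On $\{|\Delta|\le|s|/2\}$, every $x\in I_\Delta$ satisfies $|x|\ge|s|-\tfrac34|s|=|s|/4$. Since $\tau\le1$, the density is at most $\tau^{-1}\varphi(s/4)$. This part therefore contributes at most
\[
2\,\mathbb E|\Delta|\,\tau^{-1}\varphi(s/4)\le c\,\sigma\,\tau^{-1}\varphi(s/5),
\]
using $\mathbb E|\Delta|=\sigma\sqrt{2/\pi}$ and $\varphi(s/4)\le\varphi(s/5)$.
\item On $\{|\Delta|>|s|/2\}$, bound the density of $W$ crudely by $(\tau\sqrt{2\pi})^{-1}$. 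The Gaussian identity $\mathbb E\bigl[|\Delta|\mathbf 1\{|\Delta|>a\}\bigr]=2\sigma\varphi(a/\sigma)$ then gives a contribution of at most
\[
c\,\tau^{-1}\sigma\,\varphi\bigl(s/(2\sigma)\bigr).
\]
Because $\sigma\le\sqrt2$, we have $|s|/(2\sigma)\ge|s|/(2\sqrt2)\ge|s|/5$, so $\varphi(s/(2\sigma))\le\varphi(s/5)$.
\end{itemize}
Adding the two contributions yields the claimed bound $c\,\sigma\varphi(s/5)/\sqrt{1-\sigma^2/4}$ with a universal constant $c$.

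\textbf{Main obstacle.} The only delicate point is choosing the splitting threshold so that both regimes produce the Gaussian factor $\varphi(s/5)$.
\begin{itemize}
\item In the small-increment regime, the factor comes from the interval staying a distance of order $|s|$ away from the origin.
\item In the large-increment regime, it comes from the tail of $\Delta$, and here the restriction $\sigma^2\le2$ is used.
\end{itemize}
The threshold $|s|/2$ and the constant $5$ give enough room in both. The degenerate case $s=0$ is trivially covered, since then the small-increment event reduces to $\{\Delta=0\}$, which has probability zero.
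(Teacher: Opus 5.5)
Your proof is correct and takes essentially the same approach as the paper. Both arguments use the Gaussian regression $Z \mid \Delta \sim \mathcal{N}(-\Delta/2,\,1-\sigma^2/4)$ and then split into small and large increments to extract the factor $\varphi(s/5)$. The only difference is the split point: the paper splits at $|\Delta|/\sigma \le |s|/5$ and uses $\sigma\le\sqrt{2}$ in the small-increment regime, whereas you split at $|\Delta|\le |s|/2$ and use $\sigma\le\sqrt{2}$ in the tail regime.
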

\begin{proof}[Proof of Lemma~\ref{lem:gaussian_lemma}]
  We can show by elementary calculations that $\rho = \mathrm{Cor}(Z,\Delta) = -\sigma/2$. Since $(Z,\Delta)$ follows a multivariate normal distribution, the conditional distribution of $Z$ given $\Delta$ is $\mathcal{N}\left(\rho \Delta/\sigma, 1 - \rho^2\right)$. By the law of total probability,
  \begin{align*}
    \mathbb{P}(Z \in [s - |\Delta|, s + |\Delta|]) & = \mathbb{E}\left[\mathbb{P}(Z \in [s - |\Delta|, s + |\Delta|] ~|~ \Delta)\right]                         \\
  & = \mathbb{E}\left[\Phi\left(\frac{s + |\Delta| - \rho \Delta/\sigma}{\sqrt{1 - \rho^2}}\right) - \Phi\left(\frac{s - |\Delta| - \rho \Delta/\sigma}{\sqrt{1 - \rho^2}}\right)\right]                             \\
   & = \mathbb{E}\left[\Phi\left(\frac{s + |\Delta| + \Delta/2}{\sqrt{1 - \rho^2}}\right) - \Phi\left(\frac{s - |\Delta| + \Delta/2}{\sqrt{1 - \rho^2}}\right)\right]                                             \\
   & = \int_{-\infty}^\infty \left\{\Phi\left(\frac{s + \sigma|u| + \sigma u/2}{\sqrt{1 - \rho^2}}\right) - \Phi\left(\frac{s - \sigma|u| + \sigma u/2}{\sqrt{1 - \rho^2}}\right)\right\} \varphi(u) ~\mathrm{du}
  \end{align*}
  For any $a \leq b$ and $0 < r < 1$, we can bound $\Phi(b/r) - \Phi(a/r)$ by $r^{-1}(b-a)\varphi(b/r) \leq r^{-1}(b-a) \varphi(b)$ if $b \leq 0$ and by $r^{-1}(b-a) \varphi(a/r) \leq r^{-1}(b-a)\varphi(a)$ if $a \geq 0$. In all other cases, we can trivially bound it by $r^{-1}(b-a)$. If $|u| \leq |s|/5$, then $3\sigma |u|/2 \leq 3|s|/(5\sqrt{2}) \leq |s|/2$. This also implies that the interval with endpoints $s \pm 3\sigma |u|/2$ is contained either in $[s/2, \infty)$ or $(-\infty, s/2]$, depending on the sign of $s$. The final integral in the above equation can therefore be bounded by
  \begin{align*}
     & \frac{1}{\sqrt{1-\rho^2}} \left\{\int_{-\infty}^{-|s|/5} 2|u|\sigma\varphi(u) ~\mathrm{d}u + \int_{-|s|/5}^{|s|/5} 2 |u| \sigma \varphi(u) \varphi\left(\frac{s}{2}\right) ~\mathrm{d}u + \int_{|s|/5}^\infty 2|u|\sigma \varphi(u) ~\mathrm{d}u\right\} \\
     & \quad \leq \mathrm{const} \cdot \frac{\sigma}{\sqrt{1-\rho^2}}\left\{\varphi\left(\frac{s}{2}\right) + \mathrm{exp}\left(-\frac{(s/5)^2}{2}\right)\right\}                                                                                               \\
     & \quad \leq \mathrm{const} \cdot \frac{\sigma}{\sqrt{1 - \rho^2}} \varphi\left(\frac{s}{5}\right)
  \end{align*}
  for some universal constants. This proves our claim since $\rho = -\sigma/2$.
\end{proof}

Define the processes $Q_N$ and $Q_N'$ on $[0,1]^2$ by
\begin{equation}
\begin{split}
  Q_N(u,v)  & = \sqrt{Nv(1-v)} \left\{\tilde{F}_{N,v}^{-1}(u) - u\right\}, \\
  Q_N'(u,v) & = \sqrt{Nv(1-v)} \left\{\tilde{F}_{N,v}(u) - u\right\},
\end{split}  \label{holger2}   
\end{equation}
where
$$
  \tilde{F}_{N,v}(u) = \frac{1}{N} \sum_{i=1}^N \textbf{1}\left(\Phi_{v(1-v)}(B_i(v)) \leq u\right).
$$

\begin{lemma}
  \label{lem:moment_lemma_FN_tilde}
  For any $k \in \mathbb{N}$ and  $\varepsilon > 0$, there exist two constants $c_k$ and $c'_\varepsilon$ such that for all $0 < \alpha < \min\{1/(2k), 1/52\}$ it holds that
  \begin{align*}
     & \left\|\{u(1-u)\}^{-\alpha} \sqrt{N}\left\{\tilde{F}_{N,v}(u) - u\right\} - \{u'(1-u')\}^{-\alpha} \sqrt{N}\left\{\tilde{F}_{N,v'}(u') - u'\right\}\right\|_{L_{2k}} \\
     & \quad \leq c_k \max\left\{N^{-1/2 + \alpha}, |u-u'|^{1/2 - \alpha} + c'_\varepsilon |v-v'|^{1/4}\right\}
  \end{align*}
  for all $N \in \mathbb{N}$, $0 \leq u, u' \leq 1$ and $\varepsilon < v,v' < 1-\varepsilon$. Here, we note that for $u>0$ sufficiently small, $\{u(1-u)\}^{-\alpha} \sqrt{N}\left\{\tilde{F}_{N,v}(u) - u\right\}= - u\{u(1-u)\}^{-\alpha}$ and for $u<1$ sufficiently close to $1$, $\{u(1-u)\}^{-\alpha} \sqrt{N}\left\{\tilde{F}_{N,v}(u) - u\right\}= (1- u)\{u(1-u)\}^{-\alpha}$. In this way, all expressions above are well defined for $u,u' = 0,1$.
\end{lemma}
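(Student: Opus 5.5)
The approach is to split the difference into a $u$-increment and a $v$-increment,
\[
\Delta := \big[h_v(u)-h_v(u')\big] + \big[h_v(u')-h_{v'}(u')\big],\qquad h_v(u):=\{u(1-u)\}^{-\alpha}\sqrt{N}\{\tilde F_{N,v}(u)-u\},
\]
and bound each term separately in $L_{2k}$. The underlying variables are $U_i(v):=\Phi_{v(1-v)}(B_i(v))$, which are i.i.d.\ uniform on $[0,1]$ in $i$ for fixed $v$. So $\sqrt N\{\tilde F_{N,v}(u)-u\}$ is a normalized sum of i.i.d.\ centered bounded variables $\xi_i=\textbf{1}(U_i(v)\le u)-u$. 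The main tool is Rosenthal's inequality: for i.i.d.\ centered $\xi_i$ with $|\xi_i|\le 1$ and $\E\xi_i^2=p$,
\[
\Big\|N^{-1/2}\sum_i\xi_i\Big\|_{L_{2k}}\lesssim_k \sqrt p + N^{-1/2+1/(2k)}p^{1/(2k)}.
\]

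For the $u$-increment with $v$ fixed, assume $u\le u'$ and first take $u,u'\le 1/2$; the upper half is symmetric, and mixed cases go through a midpoint. Decompose
\[
h_v(u)-h_v(u') = \{u(1-u)\}^{-\alpha}\,\sqrt N\{(\tilde F-\mathrm{id})(u)-(\tilde F-\mathrm{id})(u')\} + \big[\{u(1-u)\}^{-\alpha}-\{u'(1-u')\}^{-\alpha}\big]\sqrt N\{\tilde F(u')-u'\}.
\]
In the first term, Rosenthal with $p\approx|u-u'|$ gives $u^{-\alpha}\big(|u-u'|^{1/2}+N^{-1/2+1/(2k)}|u-u'|^{1/(2k)}\big)$. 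In the second term, Rosenthal with $p\approx u'$ gives a factor of size $\sqrt{u'}$, and the weight difference is at most $|u-u'|\,u^{-1-\alpha}$ (or at most $u^{-\alpha}$).

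Next, split by regime, comparing $|u-u'|$ with $u$ and $u$ with $1/N$. If $u\ge|u-u'|$, then $u^{-\alpha}|u-u'|^{1/2}\le|u-u'|^{1/2-\alpha}$. If $u<|u-u'|$, bound both pieces crudely by their values at $0$-scale, using $\|h_v(u)\|_{2k}\lesssim u^{1/2-\alpha}+N^{-1/2+1/(2k)}u^{1/(2k)-\alpha}$. The terms with $N^{-1/2+1/(2k)}$ are where the condition $\alpha<1/(2k)$ enters. When $|u-u'|\ge 1/N$, such a term satisfies $N^{-1/2+1/(2k)}|u-u'|^{1/(2k)-\alpha}\le |u-u'|^{1/2-\alpha}$. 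When $|u-u'|<1/N$ and both points are small, $N^{-1/2+1/(2k)}|u-u'|^{1/(2k)-\alpha}\le N^{-1/2+\alpha}$. This is exactly where the $\max\{N^{-1/2+\alpha},\cdot\}$ in the claim comes from. The boundary convention at $u=0,1$ (where $\tilde F$ is deterministic near the endpoints with probability one) is consistent with these bounds, because the weighted process vanishes continuously there.

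For the $v$-increment at fixed $u'$, write
\[
\xi_i=\textbf{1}(U_i(v)\le u')-\textbf{1}(U_i(v')\le u'),
\]
which is centered because both indicators have mean $u'$. Here $p=\E\xi_i^2=\P(U_i(v)\le u'<U_i(v')\text{ or vice versa})$. Standardize $Z=B(v)/\sqrt{v(1-v)}$ and $Z'=B(v')/\sqrt{v'(1-v')}$. Then $p\le\P(Z\in[s-|\Delta|,s+|\Delta|])$ with $s=\Phi^{-1}(u')$. Lemma~\ref{lem:gaussian_lemma} bounds this by $c\,\sigma\varphi(s/5)$, and for $v,v'\in(\eps,1-\eps)$ one has $\sigma^2=\mathrm{Var}(Z'-Z)\le c_\eps|v-v'|$, with $\sigma^2\le2$ automatically. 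Hence $p\lesssim c_\eps|v-v'|^{1/2}\varphi(s/5)$. The factor $\varphi(\Phi^{-1}(u')/5)$ behaves like $\{u'(1-u')\}^{1/25}$ up to logarithms, which absorbs the weight $\{u'(1-u')\}^{-2\alpha}$ exactly when $\alpha<1/52$; this is the source of that constant. Rosenthal then gives $\sqrt p\lesssim c'_\eps|v-v'|^{1/4}$, plus a term $N^{-1/2+1/(2k)}p^{1/(2k)}$ which is handled by the same small-$|v-v'|$ versus $N$ dichotomy, or more crudely by the $N^{-1/2+\alpha}$ cap after using $u'$-small bounds.

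I expect the main obstacle to be the bookkeeping of this last Rosenthal remainder together with the weight near $u\in\{0,1\}$. If the dichotomy does not close cleanly, the fallback is to bound the $v$-increment by the sum of the two weighted one-point norms whenever $u'(1-u')\lesssim N^{-1}$.
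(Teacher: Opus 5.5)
Your proposal is correct and is essentially the paper's proof. You use the same two-part split: a $u$-increment at fixed $v$ (Rosenthal for bounded i.i.d.\ summands, the mean-value bound on the weight $\{u(1-u)\}^{-\alpha}$, and a case analysis comparing $|u-u'|$, $u(1-u)$ and $1/N$), and a $v$-increment at fixed $u$ (Lemma~\ref{lem:gaussian_lemma}, $\varphi(\Phi^{-1}(u)/5)\lesssim\{u(1-u)\}^{1/26}$ forcing $\alpha<1/52$, and $\sigma_{v,v'}^2\le c(\varepsilon)|v-v'|$).

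The one place to tighten is the Rosenthal remainder in the $v$-increment, where the split that closes is $p$ versus $1/N$ rather than $|v-v'|$ versus $N$. For $p\ge 1/N$ the remainder is dominated by $\{u'(1-u')\}^{-\alpha}\sqrt p$. For $p<1/N$ it is at most $\{u'(1-u')\}^{-\alpha}N^{-1/2}\le N^{-1/2+\alpha}$ whenever $u'(1-u')\ge 1/N$. Your fallback covers $u'(1-u')<1/N$, so this is exactly the paper's three-case split.
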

\begin{proof}[Proof of Lemma~\ref{lem:moment_lemma_FN_tilde}]
Recall Rosenthal's inequality \citep[][Theorem 3]{rosenthal:1970}: for $p > 2$ there exists a constant $K_p$ depending only on $p$ such that for any collection of centered and independent random variables $D_1, \ldots, D_N$,
\begin{align}
    \label{holger1}
\mathbb{E}\left[\left|\sum_{i=1}^N D_i\right|^p\right] \leq K_p \max\left\{\sum_{i=1}^N \mathbb{E}\left[|D_i|^p\right], \left(\sum_{i=1}^N \mathbb{E}\left[D_i^2\right]\right)^{p/2}\right\}.
\end{align}
If additionally all $D_i$ have the same moments and $|D_i| \leq 1$ almost surely, then this yields
\begin{equation}
\label{eq:2k_moments_sum_bound}
\mathbb{E}\left[\left(N^{-1/2} \sum_{i=1}^N D_i\right)^{2k}\right] \lesssim  \left(N^{-1} \lor \E[D_1^2] \right)^k,
\end{equation}
where $\lesssim$ is hiding some constant depending only on $k$. To see this, consider the cases $\mathbb{E}[D_1^{2k}] \leq 1/N$ and $\mathbb{E}[D_1^{2k}] > 1/N$ separately and use that the second case implies 
$$
N \mathbb{E}[D_1^{2k}] < N^k \mathbb{E}[D_1^{2k}]^k \leq N^k \mathbb{E}[D_1^2]^k.
$$
Now fix any $0 \le u' < u \le 1 $ and $v \in [\eps,1-\eps]$ and write 
  $$
    D_i = \textbf{1}\left(\Phi_{v(1-v)}(B_i(v)) \leq u\right) - u - \textbf{1}\left(\Phi_{v(1-v)}(B_i(v)) \leq u'\right) + u'.
  $$
  In this case, $\mathbb{E}[D_i^2]$ is bounded by some fixed multiple of $|u-u'|$ for all $i = 1, \ldots, N$. By Eq.\@ \eqref{eq:2k_moments_sum_bound}, 
  \begin{equation}
      \label{eq:diffbound_1}
  \left\|\sqrt{N} \left\{\tilde{F}_{N,v}(u) - u - \tilde{F}_{N,v}(u') + u'\right\}\right\|_{L_{2k}}  = \mathbb{E}\left[\left(N^{-1/2} \sum_{i=1}^N D_i\right)^{2k}\right]^{1/(2k)} \lesssim \left(N^{-1} \lor |u-u'|\right)^{1/2}
  \end{equation}
  uniformly in $v \in (0,1)$. By setting $D_i = \textbf{1}\left(\Phi_{v(1-v)}(B_i(v)) \leq u\right) - u$ instead we also obtain
  \begin{equation}
    \label{eq:diffbound_2}
    \sup_{0 \leq v \leq 1} \left\|\sqrt{N} \left\{\tilde{F}_{N,v}(u) - u \right\}\right\|_{L_{2k}} \lesssim \left[N^{-1} \lor \min\{u, (1-u)\}\right]^{1/2} \lesssim \left[N^{-1} \lor \{u(1-u)\}\right]^{1/2}
  \end{equation}
for any $0 \leq u \leq 1$.

Now first assume that $u \in \{0,1\}, u' \neq u$. Then
\begin{align*}
&\left\|\{u(1-u)\}^{-\alpha} \sqrt{N}\left\{\tilde{F}_{N,v}(u) - u\right\} - \{u'(1-u')\}^{-\alpha} \sqrt{N}\left\{\tilde{F}_{N,v'}(u') - u'\right\}\right\|_{L_{2k}}
\\
& = 
\left\|\{u'(1-u')\}^{-\alpha} \sqrt{N}\left\{\tilde{F}_{N,v'}(u') - u'\right\}\right\|_{L_{2k}}
\end{align*}
and the claim follows from \eqref{eq:diffbound_2}. In what follows we thus assume $u,u' \in (0,1)$. Combining  Eqs.\@ \eqref{eq:diffbound_1} and \eqref{eq:diffbound_2} gives
 \begin{align}
    \begin{split}
      \label{eq:l2k_norm_v_gleich_1}
      & \left\|\{u(1-u)\}^{-\alpha} \sqrt{N}\left\{\tilde{F}_{N,v}(u) - u\right\} - \{u'(1-u')\}^{-\alpha} \sqrt{N}\left\{\tilde{F}_{N,v}(u') - u'\right\}\right\|_{L_{2k}} \\
      & \quad \leq \left|\{u(1-u)\}^{-\alpha} - \{u'(1-u')\}^{-\alpha}\right| \left\|\sqrt{N}\left\{\tilde{F}_{N,v}(u) - u\right\}\right\|_{L_{2k}}                         \\
      & \qquad + \{u'(1-u')\}^{-\alpha} \left\|\sqrt{N}\left\{\tilde{F}_{N,v}(u) - u\right\} - \sqrt{N}\left\{\tilde{F}_{N,v}(u') - u'\right\}\right\|_{L_{2k}}             \\
      & \quad \lesssim \left|\{u(1-u)\}^{-\alpha} - \{u'(1-u')\}^{-\alpha}\right|  \left[N^{-1} \lor \{u(1-u)\}\right]^{1/2} + \{u'(1-u')\}^{-\alpha} \left(N^{-1} \lor |u-u'|\right)^{1/2}.
    \end{split}
  \end{align}
  Let $f(u) = \{u(1-u)\}^{-\alpha}$, then $f'(u) = -\alpha (1-2u)\{u(1-u)\}^{-\alpha-1}$. By the mean value theorem, there exists some $\xi \in [u', u]$ such that $f(u) - f(u') = (u-u')f'(\xi)$. Because $|f'|$ is antitone on $(0,1/2]$ and isotone on $[1/2, 1)$, this means that
  \begin{align}
    \begin{split}
      \label{eq:fstrich_fallunterscheidung}
      \left|\{u(1-u)\}^{-\alpha} - \{u'(1-u')\}^{-\alpha}\right| & = |f'(\xi) (u - u')|                         \\
      & \leq \max\{|f'(u)|, |f'(u')|\} |u-u'|              \\
      & \leq \alpha \min\{u(1-u), u'(1-u')\}^{-\alpha-1} |u-u'|.
    \end{split}
  \end{align}
Thus, continuing the bound in~\eqref{eq:l2k_norm_v_gleich_1} we obtain for $\alpha \le 1$
\begin{multline}\label{eq:ES1}
\eqref{eq:l2k_norm_v_gleich_1} \leq  \alpha \left[N^{-1} \lor \{u(1-u)\}\right]^{1/2} \min\{u(1-u), u'(1-u')\}^{-\alpha-1} |u-u'|
\\
+ \{u'(1-u')\}^{-\alpha} \left(N^{-1} \lor |u-u'|\right)^{1/2}.
\end{multline}
The same bound holds with the roles of $u,u'$ switched.

Assume without loss of generality that $u(1-u) \le u'(1-u')$ and note that it suffices to consider two cases:
\begin{enumerate}
    \item\label{it:case1} $\max\{u(1-u),u'(1-u')\} \le 2|u-u'|$, and 
    \item\label{it:case2} $|u-u'| \le \min\{u(1-u),u'(1-u')\}$. 
\end{enumerate}
Indeed, if $|u-u'| \le \min\{u(1-u),u'(1-u')\}$ fails, we must have $u(1-u) \le |u-u'|$, but then by Lipschitz continuity of $u \mapsto u(1-u)$ on $[0,1]$ we have 
\[
u'(1-u') \le u(1-u) + |u-u'| \le 2|u-u'|,
\]
so that we are in case \ref{it:case1}. In the remainder of the proof, all constants hidden in $\lesssim, \gtrsim$ may depend on $\alpha$ but not on $N,v,u,u'$.

We start by considering the case $\max\{u(1-u),u'(1-u')\} \le 2|u-u'|$. Then by the triangle inequality
  \begin{align}
    \begin{split}
      \label{eq:l2k_norm_v_gleich_3_neu}
      & \left\|\{u(1-u)\}^{-\alpha} \sqrt{N}\left\{\tilde{F}_{N,v}(u) - u\right\} - \{u'(1-u')\}^{-\alpha} \sqrt{N}\left\{\tilde{F}_{N,v}(u') - u'\right\}\right\|_{L_{2k}} \\
      & \quad \leq \left\|\{u(1-u)\}^{-\alpha} \sqrt{N}\left\{\tilde{F}_{N,v}(u) - u\right\}\right\|_{L_{2k}} + \left\|\{u'(1-u')\}^{-\alpha} \sqrt{N}\left\{\tilde{F}_{N,v}(u') - u'\right\}\right\|_{L_{2k}}
      \end{split}
      \end{align}
      Assume first that $\{u(1-u)\} \geq N^{-1}$, then by Eq.\@ \eqref{eq:diffbound_2} 
      \begin{align}
      \begin{split}
          \label{eq:simple_1}
          \left\|\{u(1-u)\}^{-\alpha} \sqrt{N}\left\{\tilde{F}_{N,v}(u) - u\right\}\right\|_{L_{2k}} &\leq\{u(1-u)\}^{-\alpha}\left[N^{-1} \lor \{u(1-u)\}\right]^{1/2} \\
          &= \{u(1-u)\}^{1/2-\alpha} \\
          &\lesssim |u-u'|^{1/2 - \alpha},
      \end{split}
      \end{align}
      since $\max\{u(1-u),u'(1-u')\} \le 2|u-u'|$ by assumption. On the other hand, if $\{u(1-u)\} < N^{-1}$, then $Nu(1-u) < 1$, and in this case the Rosenthal inequality \eqref{holger1} implies 
      $$
\left\|\sqrt{N}\left\{\tilde{F}_{N,v}(u) - u\right\}\right\|_{L_{2k}}^{2k} \lesssim N^{-k} \max\left\{N u(1-u), \left(N u(1-u)\right)^k\right\} = N^{1-k} u(1-u).
$$
Therefore,
\begin{align}
\begin{split}
    \label{eq:simple_2}
\left\|\{u(1-u)\}^{-\alpha}\sqrt{N}\left\{\tilde{F}_{N,v}(u) - u\right\}\right\|_{L_{2k}} &\lesssim \{u(1-u)\}^{1/(2k)-\alpha} N^{(1-k)/(2k)} \\
&\leq (N^{-1})^{1/(2k) - \alpha + k/(2k) - 1/(2k)} = N^{-1/2 + \alpha}.
\end{split}
\end{align}
if $\alpha < 1/(2k)$. Eqs.\@ \eqref{eq:simple_1} and \eqref{eq:simple_2} mean that we always have the bound
$$
\left\|\{u(1-u)\}^{-\alpha} \sqrt{N}\left\{\tilde{F}_{N,v}(u) - u\right\}\right\|_{L_{2k}} \lesssim \left[ N^{-1} \lor |u-u'| \right]^{1/2-\alpha},
$$
and the same argument can be made for $u'$, resulting in the same bound. Eq.\@ \eqref{eq:l2k_norm_v_gleich_3_neu} therefore reduces to
\begin{align*}
    &\left\|\{u(1-u)\}^{-\alpha} \sqrt{N}\left\{\tilde{F}_{N,v}(u) - u\right\} - \{u'(1-u')\}^{-\alpha} \sqrt{N}\left\{\tilde{F}_{N,v}(u') - u'\right\}\right\|_{L_{2k}} \\
    &\lesssim  \left[ N^{-1} \lor |u-u'| \right]^{1/2-\alpha},
\end{align*}
which concludes the discussion for case \ref{it:case1}.

Next, consider case \ref{it:case2}, i.e.\@ $|u-u'| \le \min\{u(1-u),u'(1-u')\}$. If also $u(1-u) \ge 1/N,  u'(1-u')\ge 1/N$ then by Eq. \eqref{eq:ES1}
  \begin{align*}
    \begin{split}
      & \left\|\{u(1-u)\}^{-\alpha} \sqrt{N}\left\{\tilde{F}_{N,v}(u) - u\right\} - \{u'(1-u')\}^{-\alpha} \sqrt{N}\left\{\tilde{F}_{N,v}(u') - u'\right\}\right\|_{L_{2k}} \\
      \lesssim~&   \{u(1-u)\}^{1/2}\{u(1-u)\}^{-\alpha-1} |u-u'|^{\alpha+1/2}|u-u'|^{1/2-\alpha} + \{u'(1-u')\}^{-\alpha} \big[ N^{-1} \lor |u-u'| \big]^{1/2}
      \\
      \lesssim~&  \big[ N^{-1} \lor |u-u'| \big]^{1/2-\alpha}.
    \end{split}
  \end{align*}
If $u(1-u) \le 1/N$, then $u \in [0,2/N]\cup[1-2/N,1]$. Then $|u-u'| \le u(1-u)$ implies $u' \in [0,3/N]\cup[1-3/N,1]$ and so $u'(1-u') \lesssim 1/N$. Then by the same argument as in Eq.\@ \eqref{eq:simple_2}, we get
\[
\left\|\{u(1-u)\}^{-\alpha} \sqrt{N}\left\{\tilde{F}_{N,v}(u) - u\right\} - \{u'(1-u')\}^{-\alpha} \sqrt{N}\left\{\tilde{F}_{N,v}(u') - u'\right\}\right\|_{L_{2k}} \lesssim N^{-1/2+\alpha}.
\]
The case $u(1-u) > 1/N, u'(1-u') \le 1/N$ is ruled out since $u(1-u) \le u'(1-u')$ by assumption, and this completes our discussion of case \ref{it:case2}. 

It therefore always holds that
\begin{align}
    \begin{split}
      \label{eq:l2k_norm_v_gleich_4}
      &\sup_{0 \leq v \leq 1} \left\|\{u(1-u)\}^{-\alpha} \sqrt{N}\left\{\tilde{F}_{N,v}(u) - u\right\} - \{u'(1-u')\}^{-\alpha} \sqrt{N}\left\{\tilde{F}_{N,v}(u') - u'\right\}\right\|_{L_{2k}} \\
      &\quad \lesssim \max\left\{N^{-1/2 + \alpha}, |u-u'|^{1/2 - \alpha}\right\}.
    \end{split}
  \end{align}
provided that  $\alpha < 1/(2k)$, where the constant in $\lesssim$ is uniform in $u,u' \in [0,1]$.

  Writing $\Delta_{v,v'} = B_1(v')/\sqrt{v'(1-v')} - B_1(v)/\sqrt{v(1-v)}$, we have for any $\varepsilon \leq v, v' \leq 1 - \varepsilon$ and any $0 \leq u \leq 1$
  \begin{align}
\nonumber
      & \mathbb{E}\left[\left\{\textbf{1}\left(\Phi_{v(1-v)}(B_1(v)) \leq u\right) - \textbf{1}\left(\Phi_{v'(1-v')}(B_1(v)) \leq u\right)\right\}^2\right]                                                           \\
      & 
      \nonumber 
      \quad = \mathbb{E}\left[\left\{\textbf{1}\left(\frac{B_1(v)}{\sqrt{v(1-v)}} \leq \Phi^{-1}(u)\right) - \textbf{1}\left(\frac{B_1(v')}{\sqrt{v'(1-v')}} \leq \Phi^{-1}(u)\right)\right\}^2\right]              \\
      & 
      \nonumber
      \quad = \mathbb{E}\left[\left\{\textbf{1}\left(\frac{B_1(v)}{\sqrt{v(1-v)}} \leq \Phi^{-1}(u)\right) - \textbf{1}\left(\frac{B_1(v)}{\sqrt{v(1-v)}} + \Delta_{v,v'} \leq \Phi^{-1}(u)\right)\right\}^2\right] \\
      & \nonumber
      \quad  \leq \mathbb{E} \left|\textbf{1}\left(\frac{B_1(v)}{\sqrt{v(1-v)}} \leq \Phi^{-1}(u)\right) - \textbf{1}\left(\frac{B_1(v)}{\sqrt{v(1-v)}} + \Delta_{v,v'} \leq \Phi^{-1}(u)\right)\right|             \\
      & \quad \leq \mathbb{P}\left(\frac{B_1(v)}{\sqrt{v(1-v)}} \in \left[\Phi^{-1}(u) - |\Delta_{v,v'}|, \Phi^{-1}(u) + |\Delta_{v,v'}|\right]\right).
        \label{eq:indikator_differenz}
  \end{align}
Assume without loss of generality that $v \leq v'$. By elementary calculations, we see that
  \begin{align}
    \begin{split}
\label{eq:identities_variance_covariance_correlation}
      & \sigma^2_{v,v'} = \mathrm{Var}(\Delta_{v,v'}) = 2\left(1 - \sqrt{\frac{v(1-v')}{(1-v)v'}}\right),  \\
      & \mathrm{Cov}\left(\frac{B_1(v)}{\sqrt{v(1-v)}}, \Delta_{v,v'}\right) = -\frac{\sigma^2_{v,v'}}{2}, \\
      & \mathrm{Cor}\left(\frac{B_1(v)}{\sqrt{v(1-v)}}, \Delta_{v,v'}\right) = -\frac{\sigma_{v,v'}}{2}.   \\
    \end{split}
  \end{align}
  Now consider the following general observation: For any $x \geq 1$, Mills Ratio implies 
  $$
  \varphi(x) \leq 2 x (1 - \Phi(x)).
  $$
  Let $\kappa > 0$ be the smallest number such that $|\Phi^{-1}(u)| \geq 1$ for all $u \notin (1/2 - \kappa, 1/2 + \kappa)$. If $u \leq 1/2 - \kappa$, then for any $r < 1$
  $$
    \varphi\left(\Phi^{-1}(u)\right) = \varphi\left(\Phi^{-1}(1-u)\right) \leq 2 \Phi^{-1}(1-u) u \leq c u^r \leq 2c \{u(1-u)\}^r
  $$
  for a constant $c = c(r)$ depending on $r$. Similarly, if $u \geq 1/2 + \kappa$,
  $$
    \varphi\left(\Phi^{-1}(u)\right) \leq 2 \Phi^{-1}(u) (1-u) \leq c (1-u)^r \leq 2c \{u(1-u)\}^r.
  $$
  Now if $u \in (1/2 - \kappa, 1/2 + \kappa)$, then trivially
  $$
    \varphi\left(\Phi^{-1}(u)\right) \leq 1 = \frac{\{u(1-u)\}^r}{\{u(1-u)\}^r} \leq \left(\frac{1}{2} - \kappa\right)^{-2r} \{u(1-u)\}^r.
  $$
  Thus, we can find some constant $C = C(r)$ depending only on $r$ such that $\varphi\left(\Phi^{-1}(u)\right) \leq C \{u(1-u)\}^r$ for any $0 \leq u \leq 1$. We can now use Lemma \ref{lem:gaussian_lemma} to bound the final term in Eq.\@ \eqref{eq:indikator_differenz} by
  \begin{equation}
  \label{eq:delta_v_unterschiedlich}
    \frac{\sigma_{v,v'} \varphi\left(\Phi^{-1}(u)/5\right)}{\sqrt{1 - \sigma_{v,v'}^2/4}}  \lesssim \frac{\sigma_{v,v'} \varphi\left(\Phi^{-1}(u)\right)^{1/25}}{\sqrt{1 - \sigma_{v,v'}^2/4}} \lesssim \frac{\sigma_{v,v'} \left\{u (1-u)\right\}^{1/26}}{\sqrt{1 - \sigma_{v,v'}^2/4}},
  \end{equation}
  modulo some universal multiplicative constants where $\lesssim$ is hiding universal constants. Let us now distinguish three cases:
  \begin{enumerate}
      \item \label{it:v_bound_large} The bound in Eq.\@ \eqref{eq:delta_v_unterschiedlich} is $\geq N^{-1}$,
      \item \label{it:v_bound_small_u_middle} The bound in Eq.\@ \eqref{eq:delta_v_unterschiedlich} is $< N^{-1}$, and $u \in \left[N^{-1}, 1 - N^{-1}\right]$,
       \item \label{it:v_bound_small_u_extreme} The bound in Eq.\@ \eqref{eq:delta_v_unterschiedlich} is $< N^{-1}$, and $u \notin \left[N^{-1}, 1 - N^{-1}\right]$.
  \end{enumerate}

  \paragraph{Case \ref{it:v_bound_large}.}
  By Eq.\@ \eqref{eq:2k_moments_sum_bound},
  \begin{equation}
    \label{eq:diffbound_v}
    \sup_{u \in [0,1]}\left\|\sqrt{N} \left\{\tilde{F}_{N,v}(u) - u - \tilde{F}_{N,v'}(u) + u\right\}\right\|_{L_{2k}}  \lesssim \sqrt{\sigma_{v,v'}} \{u(1-u)\}^{1/52} \left(1 - \sigma_{v,v'}^2/4\right)^{-1/4}.
  \end{equation}
Thus,
  \begin{align*}
      &\sup_{0 \leq u \leq 1} \left\|\{u(1-u)\}^{-\alpha} \sqrt{N}\left\{\tilde{F}_{N,v}(u) - u\right\} - \{u(1-u)\}^{-\alpha} \sqrt{N}\left\{\tilde{F}_{N,v'}(u) - u\right\}\right\|_{L_{2k}} \\
      &\quad \lesssim  \sqrt{\sigma_{v,v'}} \left(1 - \sigma_{v,v'}^2/4\right)^{-1/4}
  \end{align*}
  for all $\varepsilon \leq v,v' \leq 1-\varepsilon$, provided $\alpha < 1/52$.

  \paragraph{Case \ref{it:v_bound_small_u_middle}.}
  We have 
  $$
  \sup_{u \in [0,1]}\left\|\sqrt{N} \left\{\tilde{F}_{N,v}(u) - u - \tilde{F}_{N,v'}(u) + u\right\}\right\|_{L_{2k}}  \lesssim  N^{-1/2}
  $$
  and $\{u(1-u)\}^{-\alpha} \lesssim N^\alpha$, whence 
  $$
  \sup_{0 \leq u \leq 1} \left\|\{u(1-u)\}^{-\alpha} \sqrt{N}\left\{\tilde{F}_{N,v}(u) - u\right\} - \{u(1-u)\}^{-\alpha} \sqrt{N}\left\{\tilde{F}_{N,v'}(u) - u\right\}\right\|_{L_{2k}} \lesssim N^{-1/2+\alpha}.
  $$

  \paragraph{Case \ref{it:v_bound_small_u_extreme}.}
Observing  Rosenthal's inequality \eqref{holger1}
and  $
Nu(1-u) \leq 1$, we have 
$$
\left\|\sqrt{N}\left\{\tilde{F}_{N,v}(u) - u\right\}\right\|_{L_{2k}}^{2k} \lesssim N^{-k} \max\left\{N u(1-u), \left(N u(1-u)\right)^k\right\} = N^{1-k} u(1-u),
$$
and so
\begin{align*}
\left\|\{u(1-u)\}^{-\alpha}\sqrt{N}\left\{\tilde{F}_{N,v}(u) - u\right\}\right\|_{L_{2k}} &\lesssim \{u(1-u)\}^{1/(2k)-\alpha} N^{(1-k)/(2k)} \\
&\leq (N^{-1})^{1/(2k) - \alpha + k/(2k) - 1/(2k)} = N^{-1/2 + \alpha}.
\end{align*}
if $\alpha < 1/(2k)$. By the triangle inequality, this gives 
$$
\sup_{0 \leq v \leq 1} \left\|\{u(1-u)\}^{-\alpha} \sqrt{N}\left\{\tilde{F}_{N,v}(u) - u\right\} - \{u'(1-u')\}^{-\alpha} \sqrt{N}\left\{\tilde{F}_{N,v}(u') - u'\right\}\right\|_{L_{2k}} \lesssim N^{-1/2 + \alpha}.
$$

  These three cases combine to give us 
    \begin{align}
    \begin{split}
      \label{eq:l2k_norm_u_gleich}
      &\sup_{0 \leq u \leq 1} \left\|\{u(1-u)\}^{-\alpha} \sqrt{N}\left\{\tilde{F}_{N,v}(u) - u\right\} - \{u(1-u)\}^{-\alpha} \sqrt{N}\left\{\tilde{F}_{N,v'}(u) - u\right\}\right\|_{L_{2k}} \\
      &\quad \lesssim  \max\left\{N^{-1/2 + \alpha}, \sqrt{\sigma_{v,v'}} \left(1 - \sigma_{v,v'}^2/4\right)^{-1/4}\right\}
    \end{split}
  \end{align}
  if $\alpha < \min\{1/(2k), 1/52\}$.
  
  Eqs.\@ \eqref{eq:l2k_norm_v_gleich_4} and \eqref{eq:l2k_norm_u_gleich} together imply
  \begin{align}
    \begin{split}
      \label{eq:l2k_norm_nichts_gleich}
      &\left\|\{u(1-u)\}^{-\alpha} \sqrt{N}\left\{\tilde{F}_{N,v}(u) - u\right\} - \{u'(1-u')\}^{-\alpha} \sqrt{N}\left\{\tilde{F}_{N,v'}(u') - u'\right\}\right\|_{L_{2k}} \\
      &\quad \lesssim \max\left\{N^{-1/2 + \alpha}, |u-u'|^{1/2 - \alpha} + \sqrt{\sigma_{v,v'}} \left(1 - \sigma_{v,v'}^2/4\right)^{-1/4}\right\}
    \end{split}
  \end{align}
  if $\alpha < \min\{1/(2k), 1/52\}$, where the constant in $\lesssim$ is uniform in $u,u' \in [0,1], v, v' \in [\eps,1-\eps]$.

  By the mean value theorem we have $2|1 - \sqrt{1 + x}| \leq |x|(1-|x|)^{-1/2}$ on $(-1,1)$. Since
  $$
    1 + \frac{v - v'}{v'(1-v)} = \frac{v(1-v')}{v'(1-v)}
  $$
we can use the first identity in Eq.\@ \eqref{eq:identities_variance_covariance_correlation} to see that
$$
\sigma_{v,v'}^2 \leq |v-v'| \frac{1}{v'(1-v)} \left(1 - \frac{|v - v'|}{v'(1-v)}\right)^{-1/2}.
$$
If $|v-v'| \leq v'(1-v)/2$, then this implies 
\begin{equation}
      \label{eq:sigma2_formula_1}
\sigma_{v,v'}^2 \leq |v-v'| \frac{\sqrt{2}}{v'(1-v)}.
\end{equation}
On the other hand, if $|v-v'| > v'(1-v)/2$, we can use the trivial bound $\sigma_{v,v'}^2 \leq 2$ to obtain 
\begin{equation}
      \label{eq:sigma2_formula_2}
\sigma_{v,v'}^2 \leq |v-v'| \frac{2}{|v-v'|} < |v-v'| \frac{4}{v'(1-v)},
\end{equation}
and in fact this inequality holds for all $v,v' \in (0,1)$ since it is less sharp than \eqref{eq:sigma2_formula_1}. Here, we are considering $\varepsilon \leq v,v' \leq 1-\varepsilon$, and so $v'(1-v) \geq \varepsilon^2$, which means that Eq.\@ \eqref{eq:sigma2_formula_2} reduces to $\sigma_{v,v'}^2 \leq c(\varepsilon) |v-v'|$ with $c(\varepsilon) = 4\varepsilon^{-2}$. Using this bound in Eq.\@ \eqref{eq:l2k_norm_nichts_gleich} yields
  \begin{align*}
     & \left\|\{u(1-u)\}^{-\alpha} \sqrt{N}\left\{\tilde{F}_{N,v}(u) - u\right\} - \{u'(1-u')\}^{-\alpha} \sqrt{N}\left\{\tilde{F}_{N,v'}(u') - u'\right\}\right\|_{L_{2k}} \\
     & \quad \lesssim \max\left\{N^{-1/2 + \alpha}, |u-u'|^{1/2 - \alpha} + c(\varepsilon) |v-v'|^{1/4}\right\},
  \end{align*}
  if $\alpha < \min\{1/(2k), 1/52\}$, which is what we wanted to prove.
\end{proof}

\begin{lemma}\label{lem:approxQNQn'}
  If $a_N \to 0$ and $N a_N \to \infty$ for $N \to \infty$, then it follows for the processes $Q_N$ and $Q_N'$ in \eqref{holger2} that
  \begin{equation}   \label{eq:QN_QNstrich_abstand}
    \sup_{v \in [0,1]} \sup_{a_N \leq u \leq 1-a_N} \{u(1-u)\}^{-\alpha} \left| Q_N(u,g_N(v)) + Q_N'(u,g_N(v))\right| = o_\mathbb{P}(1)
  \end{equation}
for all $0 < \alpha < 1/4$.    
\end{lemma}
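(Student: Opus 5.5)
This is a weighted Bahadur--Kiefer statement for uniform empirical processes, uniform over finitely many values of $v$. For each fixed $v\in(0,1)$ the variables $U_i^{(v)}:=\Phi_{v(1-v)}(B_i(v))$, $i=1,\dots,N$, are i.i.d.\ $\mathcal U[0,1]$. The factor $\sqrt{v(1-v)}\le 1/2$ is harmless, so it suffices to control
\[
\sqrt N\{\tilde F_{N,v}^{-1}(u)-u\}+\sqrt N\{\tilde F_{N,v}(u)-u\}.
\]
When $v\in\{0,1\}$ both $Q_N$ and $Q_N'$ vanish, so we only treat $g_N(v)$ in the grid $\{jN^{-2}: 1\le j\le N^2-1\}$, a set of at most $N^2$ points. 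The strategy is to obtain, for a single uniform sample of size $N$, a tail bound for the weighted Bahadur--Kiefer remainder that is small enough to survive a union bound over $N^2$ grid points.

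\textbf{Step 1 (reduction to the standard representation).} Write $\mathbb U_N(u)=\sqrt N(\tilde F_N(u)-u)$ for one uniform sample and $\mathbb V_N(u)=\sqrt N(\tilde F_N^{-1}(u)-u)$. We use the identity
\[
\mathbb V_N(u)+\mathbb U_N(u)=-\bigl[\mathbb U_N(\tilde F_N^{-1}(u))-\mathbb U_N(u)\bigr]+O(N^{-1/2}),
\]
which holds because $\tilde F_N(\tilde F_N^{-1}(u))=u+O(1/N)$. The remainder is therefore bounded by the oscillation modulus
\[
\omega_N(\delta,u):=\sup_{|s-u|\le\delta}|\mathbb U_N(s)-\mathbb U_N(u)|
\quad\text{evaluated at}\quad \delta=|\tilde F_N^{-1}(u)-u|.
\]

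\textbf{Step 2 (weighted control of the quantile deviation).} We need $|\tilde F_N^{-1}(u)-u|\le C\sqrt{u(1-u)\log N/N}$ uniformly on $[a_N,1-a_N]$, with probability at least $1-N^{-3}$. This follows from the exponential bounds for the weighted uniform quantile process (Shorack--Wellner, Chapter 11, or Bernstein's inequality on a grid). To apply it we must ensure $Nu(1-u)\gtrsim Na_N\to\infty$, and we use a $\log N$ margin; here the condition $Na_N\to\infty$ enters, possibly strengthened in practice to $Na_N/\log N\to\infty$.

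\textbf{Step 3 (oscillation bound).} Bernstein's inequality applied to local increments $\mathbb U_N(s)-\mathbb U_N(u)$, combined with chaining over a grid of mesh $N^{-2}$, gives for $\delta\ge\log N/N$
\[
P\Bigl(\omega_N(\delta,u)>C\sqrt{\delta\log N}\Bigr)\le N^{-5}.
\]
Substituting $\delta\asymp\sqrt{u(1-u)\log N/N}$ gives a remainder of order
\[
\{u(1-u)\}^{1/4}N^{-1/4}(\log N)^{3/4}.
\]
Multiplying by the weight $\{u(1-u)\}^{-\alpha}$ gives $\{u(1-u)\}^{1/4-\alpha}N^{-1/4}(\log N)^{3/4}\le N^{-1/4}(\log N)^{3/4}\to0$, since $\alpha<1/4$. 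Since the per-$v$ failure probability is $O(N^{-3})$, a union bound over the $N^2$ values of $g_N(v)$ and over a $u$-grid yields the claimed $o_\P(1)$.

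The main obstacle is making Steps 2--3 genuinely uniform with polynomially small failure probability near $u\approx a_N$. There the variance $u(1-u)$ is small and the simple Bahadur--Kiefer rate from the density proof does not apply directly. I would handle this with Bernstein (Poisson-type) tail bounds rather than Gaussian approximations, checking that $Na_N\to\infty$ (with the logarithmic margin of Step 2) keeps all local counts large enough for these bounds.
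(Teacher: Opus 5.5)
\textbf{Gap: Step 2 fails under the lemma's hypothesis.} Your Step~1 is exactly the paper's starting identity, but the weighted quantile bound in Step~2 does not hold when only $Na_N\to\infty$ is assumed. Write $m:=Na_N$ and suppose $m\to\infty$ but $m=o(\log N)$. At $u=a_N$ we have $\tilde F_N^{-1}(a_N)=U_{(\lceil m\rceil)}$. Your bound then requires $U_{(\lceil m\rceil)}\le (m+C\sqrt{m\log N})/N$. This already fails on the event that no observation lands in $[0,(m+C\sqrt{m\log N})/N]$, whose probability is about $\exp(-m-C\sqrt{m\log N})=N^{-o(1)}$. That is far larger than the $N^{-3}$ you need for the union bound over the $N^2$ grid values of $v$. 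Step~3 has the same defect: it needs $\delta\ge\log N/N$, which for $\delta\asymp\sqrt{u(1-u)\log N/N}$ means $u(1-u)\gtrsim\log N/N$. So, as written, your argument proves the lemma only under $Na_N/\log N\to\infty$, the strengthening you yourself hedge on, not under the stated hypothesis.

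\textbf{Fix: drop the variance-adapted control.} The local control near $u\approx a_N$ that you identify as the main obstacle is not needed. The paper proceeds as follows.
\begin{enumerate}
\item It uses the unweighted DKW bound $\sup_u|\tilde F^{-1}_{N,v_j}(u)-u|=\sup_u|\tilde F_{N,v_j}(u)-u|\le\sqrt{3\log N/N}$. This holds simultaneously for all $N^2$ grid points with probability $1-O(N^{-4})$.
\item It bounds the weight crudely by $\{u(1-u)\}^{-\alpha}\le (a_N/2)^{-\alpha}\le 2^{\alpha}N^{\alpha}$, which only needs $Na_N\ge 1$ eventually.
\end{enumerate}
With $\delta=\sqrt{3\log N/N}$, which does satisfy $\delta\ge\log N/N$, your Bernstein-plus-grid oscillation bound from Step~3 gives a remainder of order $N^{\alpha}\bigl\{N^{-1/4}(\log N)^{3/4}+N^{-1/2}\bigr\}=o(1)$, with polynomially small failure probability. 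This works precisely because $\alpha<1/4$. Alternatively, you can keep your weighted Step~2 if you add a term $C\log N/N$ to the bound. Bernstein does give that under the stated hypothesis, and the extra term contributes only $O(N^{\alpha-1/2}\log N)$ after weighting.

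\textbf{Comparison once repaired.} Your route then differs from the paper's only in how the oscillation of the uniform empirical process is controlled. The paper uses the KMT embedding together with the $\psi_2$-modulus of the Brownian bridge (Lemma~\ref{lem:brownian_increments_orlicz}). A direct Bernstein inequality over a grid of mesh $N^{-2}$ is more elementary and avoids the embedding entirely.
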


\begin{proof}[Proof of Lemma~\ref{lem:approxQNQn'}]
We begin with some preliminary observations. If $v(1-v) = 0$, then $Q_N(u,g_N(v)) = Q_N'(u,g_N(v)) = 0$ for all $u$, so in what follows it suffices to consider the case $v \in (0,1)$. Then also $g_N(v) \in (0,1)$. Let $U_1, \ldots, U_N$ be i.i.d. random variables with uniform distribution on the unit interval and denote by $J_N$ their empirical distribution function. Then, for any fixed $v \in (0,1)$, $(Q_N(u, v))_{0 \leq u \leq 1}$ is equal in distribution to
  $$
    \left\{\sqrt{N}\sqrt{v(1-v)} \left(J_N^{-1}(u) - u\right)\right\}_{0 \leq u \leq 1}.
  $$
  By Eq.\@ (3.1.11) in \cite{shorack_wellner:1986} we have
  $$
    \sup_{0 \leq u \leq 1} |J_N^{-1}(u) - u| = \sup_{0 \leq u \leq 1} |J_N(u) - u|,
  $$
  and the Dvoretzky-Kiefer-Wolfowitz (DKW) inequality \citep[e.g.\@ Inequality 1 in Chapter 9, Section 2, in][]{shorack_wellner:1986} implies
  $$
    \mathbb{P}\left(\sup_{0 \leq u \leq 1} \left|J_N(u) - u\right| \geq N^{-1/2}\, t \right) \leq 58 \, \exp\left(-2 t^2\right)
  $$
  for all $t \geq 0$. Thus,
  \begin{align}
    \begin{split}
      \label{eq:3logN_bound}
      & \mathbb{P}\left(\max_{j = 1, \ldots, N^2-1} \sup_{0 \leq u \leq 1} \left|\tilde{F}_{N,v_j}^{-1}(u) - u\right| > N^{-1/2} \sqrt{3 \log N}\right) \\
      & \quad \le \sum_{j=1}^{N^2-1} \mathbb{P}\left(\sup_{0 \leq u \leq 1} \left|\tilde{F}_{N,v_j}^{-1}(u) - u\right| > N^{-1/2} \sqrt{3 \log N}\right)  \\
      & \quad = (N^2-1) \, \mathbb{P}\left(\sup_{0 \leq u \leq 1} \left|J_{N}(u) - u\right| > N^{-1/2} \sqrt{3 \log N}\right)                             \\
      & \quad \leq 58 \, N^{-4} = o(1).
    \end{split}
  \end{align}
  Finally, by Lemma 21.1 in \cite{vandervaart:asymptotic_statistics}, $J_N \circ J_N^{-1} (u) \geq u$ for all $0 < u < 1$ with equality if and only if $u \in J_N(\mathbb{R}) = \{1/N, \ldots, N/N\}$. This implies that $|J_N \circ J_N^{-1}(u) - u| \leq 1/N$ for all $u$. Hence, on the event $\sup_{0 \leq u \leq 1} \left|J_N^{-1}(u) - u\right| \le N^{-1/2} \sqrt{3 \log N}$,
  \begin{align*}
    \left|J_N(u) + J_N^{-1}(u) - 2u\right| & = \left|\left\{J_N(u) - u\right\} - \left\{J_N\left(J_N^{-1}(u)\right) - J_N^{-1}(u)\right\} + \left\{J_N\left(J_N^{-1}(u)\right) - u\right\} \right| \\
                                      & \leq \frac{1}{N} + \left|\left\{J_N(u) - u\right\} - \left\{J_N\left(J_N^{-1}(u)\right) - J_N^{-1}(u)\right\}\right|                                  \\
    & \leq \frac{1}{N} + \sup_{|u-u'|\leq N^{-1/2} \sqrt{3 \log N}} \left|\left\{J_N(u) - u\right\} - \left\{J_N\left(u'\right) - u'\right\}\right|.
  \end{align*}

Applying this to $Q_N$ and $Q_N'$, we see that Eq.\@ \eqref{eq:QN_QNstrich_abstand} is bounded by
  \begin{equation}
    \label{eq:QN_QNstrich_abstand_2}
    (a_N/2)^{-\alpha} \left[ \frac{1}{\sqrt{N}} + \max_{j = 1, \ldots, N^2-1} \sup_{|u-u'|\leq N^{-1/2} \sqrt{3 \log N}} \sqrt{N} \left| \left\{\tilde{F}_{N,v_j}(u) - u\right\} - \left\{\tilde{F}_{N,v_j}(u') - u'\right\}\right|\right] + o_\mathbb{P}(1)
  \end{equation}
  for $N$ large enough such that $a_N \leq 1/2$. 
  
  Furthermore, by Theorem 2 in Chapter 12, Section 1, in \cite{shorack_wellner:1986}, there exist constants $c_1, c_2, c_3 > 0$ and a sequence $(W_N)_{N \in \mathbb{N}}$ of standard Brownian bridges $W_N$ such that
  $$
    \mathbb{P}\left(\sup_{0 \leq u \leq 1}\left|\sqrt{N}\left\{J_N(u) - u\right\} - W_N(u)\right| > \frac{(c_1 \log N + x){\log N}}{\sqrt{N}}\right) \leq c_2 \exp(-c_3 x)
  $$
  for all $x > 0$. This is the Hungarian embedding of the uniform empirical process. Defining 
\begin{align}
    \label{hd10}
\Delta_N = \{\eps a_N^\alpha-N^{-1/2}c_2(\log N)^2\}/\log N 
\end{align}
we have
  \begin{align}
  \begin{split}
      \label{eq:kmt_approximation}
     & \mathbb{P}\left(\sup_{0 \leq u \leq 1} a_N^{-\alpha} \left|\sqrt{N}\left\{J_N(u) - u\right\} - W_N(u)\right| > \varepsilon\right)                               \\
     & \quad = \mathbb{P}\left(\sup_{0 \leq u \leq 1}\left|\sqrt{N}\left\{J_N(u) - u\right\} - W_N(u)\right| > \frac{c_1 \log N + \sqrt{N}\Delta_N}{\sqrt{N}}\right) \\
     & \quad \leq c_2 \exp\left(-c_3 \sqrt{N}\Delta_N\right). 
     \end{split}
  \end{align}
  Now let $W_N^{(j)}$, $j = 1, \ldots, N^2-1$, $N \in \mathbb{N}$ denote the Brownian bridges approximating $\sqrt{N}\left\{\tilde{F}_{N,v_j}(u) - u\right\}$ in this sense. By a union bound argument, we have
  \begin{align}
  \begin{split}
\label{eq:op_union}
  &\mathbb{P}\left(\max_{j = 1, \ldots, N^2-1} \sup_{|u-u'| \leq \sqrt{3 N^{-1} \log N}} a_N^{-\alpha}\sqrt{N} \left| \left\{\tilde{F}_{N,v_j}(u) - u\right\} - \left\{\tilde{F}_{N,v_j}(u') - u'\right\}\right| > 3\varepsilon\right) \\
  &\quad\leq 2 N^2 \max_{j = 1, \ldots, N^2-1}\mathbb{P}\left(\sup_{0 \leq u \leq 1}a_N^{-\alpha}\left|\sqrt{N}\left\{\tilde{F}_{N,v_j}(u) - u\right\} - W_N^{(j)}(u)\right| > \varepsilon\right) \\
  &\qquad+ N^2\max_{j = 1, \ldots, N^2-1}\mathbb{P}\left(\sup_{|u-u'| \leq \sqrt{3 N^{-1} \log N}} a_N^{-\alpha}\left|W_N^{(j)}(u) - W_N^{(j)}(u')\right| > \varepsilon\right).
  \end{split}
  \end{align}
  The first summand is bounded by $2c_2 N^2 \exp(-c_3 \sqrt{N} \Delta_N)$ by Eq.\@ \eqref{eq:kmt_approximation}, and this bound converges to $0$ for $N \to \infty$ since $\sqrt{N}\Delta_N \gg \log N$ by the fact that $\alpha < 1/4$ and $a_N \gg N^{-1}$. On the other hand, we know from Lemma \ref{lem:brownian_increments_orlicz} that
  $$
  \left\|a_N^{-\alpha} \sup_{|u-u'| \leq \sqrt{3 N^{-1} \log N}} \left|W_N^{(j)}(u) - W_N^{(j)}(u')\right|\right\|_{\psi_2} \lesssim  a_N^{-\alpha} (N^{-1} \log N)^{r/4}
  $$
  for all $r < 1$, where $\lesssim$ is hiding a constant depending only on $r$. Using the tail inequality
  $$
  \mathbb{P}(|X| > x) \leq \frac{1}{\psi_2(x/\|X\|_{\psi_2})} 
  $$
  for any random variable $X$, we obtain for sufficiently large $N$
  \begin{align}
  \begin{split}
      \label{eq:brownian_bridge_log_increment}
      N^2\mathbb{P}\left(\sup_{|u-u'| \leq \sqrt{3 N^{-1} \log N}} a_N^{-\alpha}\left|W_N^{(j)}(u) - W_N^{(j)}(u')\right| > \varepsilon\right) \lesssim N^2\exp\left[-\varepsilon^2 a_N^{2\alpha} \left(\frac{N}{\log N}\right)^{r/2}\right],
      \end{split}
  \end{align}
  and the bound on the right-hand side converges to $0$ if $\alpha < r/4$ since then $a_N^{2\alpha} \big (\frac{N}{\log N}\big )^{r/2} =\big (a_N^{4\alpha/r}\frac{N}{\log N}\big )^{r/2} \gg \big (N^{-4\alpha/r}\frac{N}{\log N}\big )^{r/2}$. In light of Eq.\@ \eqref{eq:op_union}, we see that Eq.\@ \eqref{eq:QN_QNstrich_abstand_2} is $o_\mathbb{P}(1)$ for all $\alpha < 1/4$, and this also proves Eq.\@ \eqref{eq:QN_QNstrich_abstand}.   
\end{proof}

\begin{lemma}\label{lem:discretQN'inu}
Assume $N a_N \to \infty$ for $N \to \infty$. Additionally assume that $a_N = O(N^{-\eta})$ for some $\eta >0$. Then for any fixed $\alpha \in (0,1/32]$ 
  \begin{equation}
    \label{eq:QN_prime_u_discrete}
    \sup_{0 \leq u, v \leq 1}  \left|\{u(1-u)\}^{-\alpha} Q_N'(u,g_N(v)) - \{g_N(u)(1-g_N(u))\}^{-\alpha} Q_N'(g_N(u),g_N(v))\right| = o_\mathbb{P}(1).
  \end{equation}
\end{lemma}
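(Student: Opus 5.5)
The plan is to exploit that $g_N(u)$ lies within $N^{-2}$ of $u$ and that $\tilde F_{N,v}$ is a step function with very few jumps in any cell of length $N^{-2}$. Write $f(u)=\{u(1-u)\}^{-\alpha}$, $\Gamma_N(u,v)=\sqrt N\{\tilde F_{N,v}(u)-u\}$ and $v_j=jN^{-2}$. Since $Q_N'(u,v)=\sqrt{v(1-v)}\,\Gamma_N(u,v)$ with $\sqrt{v(1-v)}\le 1/2$, it suffices to bound
\[
\max_{j}\sup_{u}\big|f(u)\Gamma_N(u,v_j)-f(g_N(u))\Gamma_N(g_N(u),v_j)\big|,
\]
which is a maximum over only $N^2+1$ values of $v$. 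Everything will be done by union bounds over this grid in $v$ together with the $N^2$ cells in $u$. By symmetry ($u\leftrightarrow 1-u$ and the two branches of $g_N$) I treat only $u\in[0,1/2]$.

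\emph{Step 1 (good events).} For each fixed $j$, the variables $\Phi_{v_j(1-v_j)}(B_i(v_j))$, $i\le N$, are i.i.d. uniform, so I will work on the intersection of three events.
\begin{itemize}
\item[(a)] No $u$-cell of length $N^{-2}$ contains $5$ or more of these points, for any $j$. A fixed cell has binomial count with $\mathbb P(\text{count}\ge 5)\le \binom N5 N^{-10}\le N^{-5}$; the union over $N^2$ cells and $N^2+1$ grid values of $v$ costs $O(N^{-1})$.
\item[(b)] No point lies in $[0,N^{-4}]\cup[1-N^{-4},1]$, for any $j$. This has probability at least $1-2N\cdot N^{-4}\cdot(N^2+1)\to 1$.
\item[(c)] The DKW bound $\max_j\sup_u|\Gamma_N(u,v_j)|\le\sqrt{3\log N}$, exactly as in \eqref{eq:3logN_bound}.
\end{itemize}

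\emph{Step 2 (interior, $N^{-4}\le u\le 1/2$).} Split the difference as
\[
f(u)\{\Gamma_N(u)-\Gamma_N(g_N(u))\}+\{f(u)-f(g_N(u))\}\Gamma_N(g_N(u)).
\]
\begin{itemize}
\item[(i)] By monotonicity of $\tilde F_{N,v_j}$ and (a), $|\Gamma_N(u)-\Gamma_N(g_N(u))|\le \sqrt N(5/N+N^{-2})$. Since $f(u)\lesssim N^{4\alpha}\le N^{1/8}$, the first term is $O(N^{1/8-1/2})$.
\item[(ii)] For the second term use the mean value bound \eqref{eq:fstrich_fallunterscheidung}, $|f(u)-f(g_N(u))|\le\alpha\min\{u,g_N(u)\}^{-\alpha-1}N^{-2}$ up to constants. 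When $u\ge N^{-2}$ this is $\lesssim u^{-\alpha-1}N^{-2}\lesssim N^{2\alpha}$ times a small factor; more precisely, since $g_N(u)\le 2u$, it is at most $u^{-\alpha}\cdot N^{-2}/u\lesssim N^{2\alpha}$. That alone is not small, so for $u\ge N^{-2}$ I use the refined estimate $|f(u)-f(g_N(u))|\lesssim u^{-\alpha}N^{-2}/u$. Combined with (c) this gives $N^{2\alpha}\sqrt{3\log N}\cdot N^{-2}/u$, which is small as soon as $u\ge N^{-3/2}$.
\item[(iii)] For $N^{-4}\le u<N^{-3/2}$ I instead bound both $f(u)|\Gamma_N(u)|$ and $f(g_N(u))|\Gamma_N(g_N(u))|$ directly. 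Here $u,g_N(u)\le 2N^{-3/2}$, and by (a) together with cell counting over at most a bounded number of cells, $N\tilde F$ on $[0,2N^{-3/2}]$ is at most $5(2N^{1/2}+1)$. That is too crude, so here I expect to need a sharper count: the probability that some $[0,2N^{-3/2}]$ contains $k$ points is $\le (2N^{-1/2})^k$, and with the $N^2$ union this is fine for $k=5$. Then $|\Gamma_N|\le \sqrt N(5/N+2N^{-3/2})$ and the weight is $\le N^{4\alpha}$, so this piece is $O(N^{4\alpha-1/2})=o(1)$ because $\alpha\le 1/32$.
\end{itemize}

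\emph{Step 3 (extreme boundary, $u<N^{-4}$).} On (b) we have $\tilde F_{N,v_j}(u)=0$, so $f(u)|\Gamma_N(u)|\le \sqrt N u^{1-\alpha}2^{\alpha}\to 0$ uniformly. For the grid value $g_N(u)\in\{0,N^{-2}\}$: the value at $0$ is $0$ by the convention of Lemma~\ref{lem:moment_lemma_FN_tilde}, and the value at $N^{-2}$ is already covered by Step 2(iii).

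The main obstacle is the interplay near the boundary between the exploding weight $\{u(1-u)\}^{-\alpha}$ and the granularity of the empirical distribution function. The resolution is to pay only polynomially small union-bound costs, by choosing a large enough multiplicity $k$ of points in short intervals, and to use that $\alpha\le 1/32$ keeps the weight at most $N^{4\alpha}\ll N^{1/2}$. The hypotheses $Na_N\to\infty$ and $a_N=O(N^{-\eta})$ appear not to be needed for this argument beyond fixing the regime.
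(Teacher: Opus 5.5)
Your argument is correct, and it takes a genuinely different route from the paper.

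\textbf{The paper's route.} The paper splits at $a_N$. On $[a_N,1-a_N]$ it couples each $\sqrt N\{\tilde F_{N,v_j}-\mathrm{id}\}$ with a Brownian bridge $W_j$ via the Hungarian construction. The exponential tails of that coupling pay for the union bound over the $N^2$ grid values $v_j$. It then controls $|W_j(u)-W_j(g_N(u))|$ by the Brownian modulus bound, and the weight increment by $\sup|W_j|$. On $[0,2a_N]$ and $[1-2a_N,1]$ it shows that the weighted process $u^{-\alpha}Q_N'(u,v_j)$ is itself uniformly small. This combines the event that no uniform falls below $N^{-4}$ with Shorack--Wellner's exponential inequality for the weighted uniform empirical process. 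That is where $Na_N\to\infty$ and $a_N=O(N^{-\eta})$ are used.

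\textbf{Your route.} You use the fact that the mesh $N^{-2}$ of $g_N$ is much finer than the natural spacing $N^{-1}$ of the $N$ points. So with probability $1-O(N^{-1})$, every (cell, $v_j$) pair contains at most four points. The increment of $\Gamma_N$ across a cell is then deterministically $O(N^{-1/2})$. The weight increment is handled by the mean value bound together with DKW. Two deterministic thresholds take care of the boundary: $N^{-4}$ (no points below it) and $N^{-3/2}$ (at most four points below $2N^{-3/2}$).

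\textbf{Comparison.} Your approach gives a fully elementary proof with explicit polynomial rates, and it works for every $\alpha<1/8$. You are also right that the hypotheses on $a_N$ are superfluous, since $a_N$ does not appear in the conclusion. The paper's route needs more machinery, but it stays within the KMT and weighted-tail toolkit already used in Lemmas~\ref{lem:approxQNQn'} and~\ref{lem:uniform_quantiles}.

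\textbf{Expository points.}
\begin{enumerate}
\item In Step 2(ii), your ``refined estimate'' is the same as the first one. What you actually need is that $u\le g_N(u)\le 1/2$ on $[0,1/2]$. Hence $\min\{u(1-u),g_N(u)(1-g_N(u))\}=u(1-u)\ge u/2$, which gives the bound $\lesssim u^{-1-\alpha}N^{-2}\sqrt{\log N}$. This is $o(1)$ uniformly for $u\ge N^{-3/2}$.
\item The event used in Step 2(iii), and its mirror image near $1$, should be listed among the good events of Step 1. The crude bound you state first and then discard can be dropped.
\end{enumerate}
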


\begin{proof}[Proof of Lemma~\ref{lem:discretQN'inu}]
It suffices to prove the following claims
  \begin{equation}
    \label{eq:QN_prime_u_discrete-help}
    \sup_{0 \leq v \leq 1} \sup_{u \in [a_N,1-a_N]}  \left|\{u(1-u)\}^{-\alpha} Q_N'(u,g_N(v)) - \{g_N(u)(1-g_N(u))\}^{-\alpha} Q_N'(g_N(u),g_N(v))\right| = o_\mathbb{P}(1).
  \end{equation}
and
  \begin{align}
      \max_{j=1,\dots,N^2-1} \sup_{0 \le u \le 2a_N} |u^{-\alpha} Q_N'(u,v_j)| &= o_\P(1), \label{eq:truncneg1}
      \\
            \max_{j=1,\dots,N^2-1} \sup_{1-2a_N \le u \le 1} |u^{-\alpha} Q_N'(u,v_j)| &= o_\P(1). \label{eq:truncneg2}
  \end{align}
This is true since $u \in [0,a_N]$ implies $0 \le g_N(u) \le a_N + N^{-2} \le 2a_N$ and similarly $u \in [1-a_N,1]$ implies $g_N(u) \in [1-2a_N,1]$.

\bigskip 
 
\emph{Proof of~\eqref{eq:QN_prime_u_discrete-help}} For $j = 1, \ldots, N^2-1$, let $W_j$ denote a standard Brownian bridge such that
  \begin{equation}
    \label{eq:hungarian_embedding_tail_bound}
    \mathbb{P}\left(\sup_{0 \leq u \leq 1} \left| \sqrt{N}\left\{\tilde{F}_{N,v_j}(u) - u\right\} - W_j(u)\right| > \frac{{(c_1 \log N + x)\log N}}{\sqrt{N}}\right) \leq c_2 \exp(-c_3 x)
  \end{equation}
  for all $x > 0$. The existence of these Brownian bridges is guaranteed by {Theorem 2 in Chapter 12, Section 1,} in \cite{shorack_wellner:1986}. 
  Then
  \begin{align}
    \begin{split}
      \label{eq:diskretisierung_punktweise}
      & \mathbb{P}\left(\max_{j = 1, \ldots, N^2-1}\sup_{a_N \leq u \leq 1-a_N} \left|\{u(1-u)\}^{-\alpha} Q_N'(u,v_j) - \{g_N(u)(1-g_N(u))\}^{-\alpha} Q_N'(g_N(u),v_j)\right| > 4\varepsilon\right)                                                 \\
      & \quad \leq N^2 \max_{j = 1, \ldots, N^2-1} \mathbb{P}\left(\sup_{a_N \leq u \leq 1-a_N} \sqrt{v_j(1-v_j)}\left|\frac{W_j(u)}{\{u(1-u)\}^{\alpha}}  - \frac{W_j(g_N(u))}{\{g_N(u)(1-g_N(u))\}^{\alpha}} \right| > 2\varepsilon\right) 
      \\
      & \qquad + 2 \max_{j = 1, \ldots, N^2-1} N^2 \mathbb{P}\left(\sup_{a_N \leq u \leq 1-a_N}  {\{u(1-u)\}^{-\alpha}} \left|Q_N'(u,v_j) - \sqrt{v_j(1-v_j)}W_j(u)\right| > \varepsilon\right).
    \end{split}
  \end{align}
  Let us consider the first probability on the right-hand side. Use the triangle inequality to see that
\begin{align}
\begin{split}
    \label{eq:probs_eps_1}
    &N^2 \mathbb{P}\left(\sup_{a_N \leq u \leq 1-a_N} \sqrt{v_j(1-v_j)}\left|\{u(1-u)\}^{-\alpha} W_j(u) - \{g_N(u)(1-g_N(u))\}^{-\alpha} W_j(g_N(u))\right| > 2\varepsilon\right) \\
    &\leq N^2 \mathbb{P}\left(\sup_{a_N \leq u \leq 1-a_N} \sqrt{v_j(1-v_j)} \{u(1-u)\}^{-\alpha}\left|W_j(u) -  W_j(g_N(u))\right| > \varepsilon\right) \\
    &\quad + N^2\mathbb{P}\left(\sup_{a_N \leq u \leq 1-a_N} \sqrt{v_j(1-v_j)}\left|W_j(g_N(u))\left[\{u(1-u)\}^{-\alpha} - \{g_N(u)(1-g_N(u))\}^{-\alpha}\right]\right| > \varepsilon\right).
\end{split}
\end{align}
Since $|u - g_N(u)|\leq N^{-2} < \sqrt{3N^{-1}\log N}$ and $(1-a_N)^{-\alpha}\sqrt{v_j ( 1-v_j)} \leq 2^\alpha/2 = 2^{\alpha-1}$ for large $N$, it follows that
\begin{align}
\begin{split}
    \label{eq:probs_eps_2}
    &N^2 \mathbb{P}\left(\sup_{a_N \leq u \leq 1-a_N} \sqrt{v_j(1-v_j)} \{u(1-u)\}^{-\alpha}\left|W_j(u) -  W_j(g_N(u))\right| > \varepsilon\right) \\
    &\leq N^2 \mathbb{P}\left(\sup_{|u-u'| \leq \sqrt{3N^{-1}\log N}} a_N^{-\alpha}\left|W_j(u) -  W_j(g_N(u))\right| > \varepsilon 2^{1 - \alpha}\right)
\end{split}
\end{align}
for sufficiently large $N$, and the right-hand side converges to $0$ by Eq.\@ \eqref{eq:brownian_bridge_log_increment}. On the other hand, Eq.\@ \eqref{eq:fstrich_fallunterscheidung} gives us for $N$ sufficiently large so that $N^{-2} \ge a_N/2$ and $a_N \le 1/2$ which implies $g_N(u)(1-g_N(u)) \ge a_N/4$ for all $u \in [a_N,1-a_N]$,
$$
\left|\{u(1-u)\}^{-\alpha} - \{g_N(u)(1-g_N(u))\}^{-\alpha}\right| \leq \alpha 4^{1 + \alpha} a_N^{-\alpha - 1} N^{-2}
$$
for large $N$, and therefore (also for large $N$), noting that $v_j(1-v_j) \le 1$
\begin{align}
\begin{split}
    \label{eq:probs_eps_3}
& N^2\mathbb{P}\left(\sup_{a_N \leq u \leq 1-a_N} \sqrt{v_j(1-v_j)}\left|W_j(g_N(u))\left[\{u(1-u)\}^{-\alpha} - \{g_N(u)(1-g_N(u))\}^{-\alpha}\right]\right| > \varepsilon\right) \\
& \leq N^2\mathbb{P}\left(\sup_{a_N \leq u \leq 1-a_N} \left|W_j(g_N(u))\right| > \frac{\varepsilon}{\alpha 4^{1+\alpha}} N^2 a_N^{1+\alpha}\right) \\
& \leq N^2\mathbb{P}\left(\sup_{0 \leq u \leq 1} \left|W_j(u)\right| > \frac{\varepsilon}{\alpha 4^{1+\alpha}} N^2 a_N^{1+\alpha}\right) \\
& \leq 2 N^2 \exp\left\{-2\varepsilon/(\alpha 4^{1+\alpha}) N^2 a_N^{1+\alpha}\right\}
\end{split}
\end{align}
by Eq. (11) in Chapter 2, Section 2, of \cite{shorack_wellner:1986}. The expression in the last line converges to $0$ as $N \to \infty$, because $N^2 a_N^{1+\alpha} = N^{1-\alpha} (N a_N)^{1+\alpha}$, and $Na_N \to \infty$ by assumption. Eqs.\@ \eqref{eq:probs_eps_1}, \eqref{eq:probs_eps_2} and \eqref{eq:probs_eps_3} imply that
\begin{equation}
\label{eq:probs_convergence_1}
N^2 \max_{j = 1, \ldots, N^2-1}\mathbb{P}\left(\sup_{a_N \leq u \leq 1-a_N} \sqrt{v_j(1-v_j)}\left|\frac{W_j(u)}{\{u(1-u)\}^{\alpha}}  - \frac{W_j(g_N(u))}{\{g_N(u)(1-g_N(u))\}^{\alpha}} \right| > 2\varepsilon\right) \to 0
\end{equation}
as $N \to \infty$. For the second probability on the right-hand side of Eq.\@ \eqref{eq:diskretisierung_punktweise} we have for any $\varepsilon > 0$, since $N^{-1/2} \log N = o\left(a_N^\alpha\right)$ for $\alpha < 1/2$,
  \begin{align*}
     & \mathbb{P}\left( \sup_{a_N \leq u \leq 1-a_N}  {\{u(1-u)\}^{-\alpha}} \left|Q_N'(u,v_j) - \sqrt{v_j(1-v_j)}W_j(u)\right| > \varepsilon\right)                                                \\
     & \quad \leq \mathbb{P}\left( \sup_{a_N \leq u \leq 1-a_N}  \left|\sqrt{N}\left\{\tilde{F}_{N,v_j}(u) - u\right\} - W_j(u)\right| > \frac{\varepsilon \, a_N^\alpha(1-a_N)^\alpha}{\sqrt{v_j(1-v_j)}}\right) \\
     & \quad \leq \mathbb{P}\left( \sup_{a_N \leq u \leq 1-a_N}  \left|\sqrt{N}\left\{\tilde{F}_{N,v_j}(u) - u\right\} - W_j(u)\right| > \, \varepsilon \,  a_N^\alpha\right)              \\
     & \quad \leq c_2 \exp\left(-c_3 \sqrt{N} \Delta_N\right)
  \end{align*}
where we used that $(1-a_N)^\alpha/\sqrt{v_j(1-v_j)} \ge 1$ for $N$ sufficiently large and the last inequality follows by the tail bound \eqref{eq:hungarian_embedding_tail_bound} on the Hungarian embedding, where $\Delta_N$ is defined in \eqref{hd10} and satisfies $\sqrt{N}\Delta_N/\log N \to \infty$ since $\alpha < 1/4$. 
  This is essentially the same argument as in the proof of Eq.\@ \eqref{eq:QN_QNstrich_abstand}. Hence,
  \begin{align}
  \begin{split}
      \label{eq:probs_convergence_2}
     &N^2 \mathbb{P}\left(\sup_{a_N \leq u \leq 1-a_N}  {\{u(1-u)\}^{-\alpha}} \left|Q_N'(u,v_j) - \sqrt{v_j(1-v_j)}W_j(u)\right| > \varepsilon\right) \\
     &\quad \leq c_2 N^2 \exp\left(-c_3 \sqrt{N} \Delta_N\right) \xrightarrow[N \to \infty]{} 0.
  \end{split}
  \end{align}
Eqs.\@ \eqref{eq:probs_convergence_1} and \eqref{eq:probs_convergence_2} imply that the right-hand side of Eq.\@ \eqref{eq:diskretisierung_punktweise} tends to $0$ for $N \to \infty$. This proves Eq.\@ \eqref{eq:QN_prime_u_discrete-help}. 

\bigskip

\textit{Proof of~\eqref{eq:truncneg1} and~\eqref{eq:truncneg2}}
The bounds~\eqref{eq:truncneg1}, ~\eqref{eq:truncneg2} can be proved by similar arguments, so we will only provide a proof for~\eqref{eq:truncneg1}. Let $\{U_j\}_{j =1,\dots, N(N^2-1)}$ denote the collection of possibly dependent standard uniform random variables that make up $Q_N'(\cdot,v_1),\dots,Q_N'(\cdot,v_{N^2-1})$. Then by a union bound argument
\[
\P\big(\min_j U_j < N^{-4}\big) \le N^3N^{-4} = o(1).
\]
This implies that, with probability going to one, $Q_n'(u,v_j) = -u\sqrt{Nv_j(1-v_j)}$ for all $u \in [0,N^{-4}]$ and $j \in 1,\dots,N^2-1$. This in turn yields
\[
\max_{j=1,\dots,N^2-1} \sup_{0 \le u \le N^{-4}} |u^{-\alpha} Q_N'(u,v_j)| = o_\P(1),
\]
and it remains to consider the maximum over $N^{-4} \le u \le a_N$. To this end, we will apply part (6) of Inequality 1 in Chapter 11, Section 2 of \cite{shorack_wellner:1986} with, in their notation $a = N^{-4}, b= a_N, \delta = a_N$, $q(u) = u^\alpha$ and $\lambda = a_N^\alpha$. The function $\psi$ appearing in that inequality can be bounded by equation (10) in Proposition 1 in Chapter 11, Section 1  of \cite{shorack_wellner:1986} in the following way: $\psi(\lambda) \ge (1+\lambda/3)^{-1}$. Thus, writing
\[
\gamma^+(t) = \psi\left(\frac{a_N^\alpha}{\sqrt{N}t^{1-\alpha}}\right),
\]
which is the function from (8) in Chapter 11, Section 2 of \cite{shorack_wellner:1986}, it holds for all $t \in [N^{-4},a_N]$,
\begin{align*}
\lambda^2\gamma^+(t)q^2(t)/t &= a_N^{2\alpha}t^{2\alpha-1}\psi\left(\frac{a_N^{\alpha}}{\sqrt{N}t^{1-\alpha}}\right) \ge \frac{a_N^{2\alpha}t^{2\alpha-1}}{1 + a_N^\alpha N^{-1/2}t^{\alpha-1}/3} \ge \frac{t^{4\alpha-1} }{1 + N^{-1/2}t^{2\alpha-1}}
\\
&\ge \frac{1}{t^{1-4\alpha} + N^{-1/2}t^{-2\alpha} } \ge \frac{1}{2(a_N^{1/2}\vee N^{-1/4})}.
\end{align*}
In the last line we used $\alpha \le 1/32$ which implies $t^{-2\alpha} \leq t^{-1/16} \le N^{1/4}$ for $N^{-4} \le t$, so that $N^{-1/2}t^{-2\alpha} \le N^{-1/4}$ and further $t^{1-4\alpha} \le t^{1/2} \le a_N^{1/2}$ for $t \le a_N$. Now assume w.o.l.g. that $N$ is sufficiently large so that $a_N < 1/2$. Then by part (6) of Inequality 1 in Chapter 11, Section 2 of \cite{shorack_wellner:1986}, with the choices described above, 
\begin{align*}
\max_{j = 1,\dots, N^2-1}\P\Big(\sup_{u \in [N^{-4},a_N]} u^{-\alpha} Q_n'(u,v_j) \ge a_N^{\alpha}\Big) 
&\le 4\int_{N^{-4}/2}^{a_N} \frac{1}{t} 
\exp(-1/[128\sqrt{a_N}]) dt 
\\
&\le 4 \exp(-1/[128\sqrt{a_N}]) \log(N^{-4}/2) = o(N^{-2}). 
\end{align*}
Similar but simpler computations show that, applying part (6) of Inequality 1 in Chapter 11, Section 2 of \cite{shorack_wellner:1986} again for the negative part
\[
\max_{j = 1,\dots, N^2-1}\P\Big(\sup_{u \in [N^{-4},a_N]} \big\{-u^{-\alpha} Q_n'(u,v_j)\big\} \ge a_N^{\alpha}\Big) = o(N^{-2}).
\]
Combining the results above completes the proof of~\eqref{eq:truncneg1}. The proof for~\eqref{eq:truncneg2} can be done by similar arguments, and this completes the proof. 
\end{proof}

\begin{lemma}
  \label{lem:konvergenz_QN_prime}
Assume $N a_N \to \infty$ for $N \to \infty$. Additionally assume that $a_N = O(N^{-\eta})$ for some $\eta >0$. Fix $\alpha \in (0,1/32]$. Defining the process $L$ by
  $$
  L(u,v) = \frac{\sqrt{v(1-v)}}{\{u(1-u)\}^\alpha} G(u,v),
  $$
  where $G$ is the Gaussian process from the statement of Theorem \ref{thm:asymptotik}, it holds that
  \begin{equation}
    \label{eq:step3_3}
    \left[\{u(1-u)\}^{-\alpha} Q_N'(u,g_N(v))\right]_{0 \leq u,v \leq 1} \rightsquigarrow L.
  \end{equation}
\end{lemma}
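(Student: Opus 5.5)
The strategy is the standard route to weak convergence in $\ell^\infty([0,1]^2)$: prove convergence of the finite-dimensional distributions, then prove asymptotic tightness. Tightness will be shown through asymptotic uniform equicontinuity in probability with respect to a suitable semimetric. The discretisation via $g_N$ (Lemma~\ref{lem:discretQN'inu}) and the moment bound of Lemma~\ref{lem:moment_lemma_FN_tilde} are exactly the tools for the second part.

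\emph{Finite-dimensional convergence.} Fix points $(u_1,v_1),\dots,(u_m,v_m)$. For interior $v_\ell\in(0,1)$ we have $g_N(v_\ell)\to v_\ell$ and $g_N(v_\ell)(1-g_N(v_\ell))\to v_\ell(1-v_\ell)$. Write $\{u(1-u)\}^{-\alpha}Q_N'(u,g_N(v))=\sqrt{g_N(v)(1-g_N(v))}\,\{u(1-u)\}^{-\alpha}N^{-1/2}\sum_{i=1}^N\xi_i(u,g_N(v))$. Here
\[
\xi_i(u,w)=\textbf{1}\{B_i(w)/\sqrt{w(1-w)}\le\Phi^{-1}(u)\}-u
\]
are i.i.d.\ over $i$, bounded and centered. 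The covariance of $\xi_1(u,g_N(v))$ and $\xi_1(u',g_N(v'))$ converges to $\mathrm{Cov}(\textbf{1}\{B(v)\le\Phi^{-1}_{v(1-v)}(u)\},\textbf{1}\{B(v')\le\Phi^{-1}_{v'(1-v')}(u')\})$. This follows from the $L_1$-continuity bound \eqref{eq:indikator_differenz} combined with Lemma~\ref{lem:gaussian_lemma} and $\sigma_{v,g_N(v)}\to0$. The Lindeberg--Feller CLT for triangular arrays of bounded vectors then gives convergence to the law of $L$. Points with $u\in\{0,1\}$ or $v\in\{0,1\}$ are trivial, since both sides vanish there.

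\emph{Tightness.} Lemma~\ref{lem:discretQN'inu} lets me replace $u$ by $g_N(u)$. The process then lives on the finite grid $\{jN^{-2}\}^2$, and it suffices to control moduli of continuity over grid points.

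\begin{itemize}
\item For $v$ in $[\varepsilon,1-\varepsilon]$, apply Lemma~\ref{lem:moment_lemma_FN_tilde} with $k$ large and $\alpha<\min\{1/(2k),1/52\}$. The increments then satisfy an $L_{2k}$ bound of order $\max\{N^{-1/2+\alpha},\,|u-u'|^{1/2-\alpha}+|v-v'|^{1/4}\}$. A chaining argument of the type of Theorem 2.2.4 in \cite{vandervaart_wellner:weak_convergence} handles this bound. The $\psi=x^{2k}$ Orlicz norm is used, with $2k\cdot\frac14>2$ so the entropy integral in two dimensions converges. The additive $N^{-1/2+\alpha}$ floor is harmless, because the grid has only $N^4$ points and $N^{4/(2k)}N^{-1/2+\alpha}\to0$ for $k$ large.
\item The stated $\alpha\le1/32$ is larger than what Lemma~\ref{lem:moment_lemma_FN_tilde} allows for large $k$. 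I will handle this by proving tightness for a small exponent $\alpha_0$ and then writing $\{u(1-u)\}^{-\alpha}=\{u(1-u)\}^{-\alpha_0}\{u(1-u)\}^{\alpha_0-\alpha}$. This factorisation only helps if a uniform bound like \eqref{eq:truncneg1} holds near $u\in\{0,1\}$ at the level $\alpha$. Those near-boundary strips, $u\le 2a_N$ and $u\ge1-2a_N$, are negligible uniformly in $v$ by \eqref{eq:truncneg1}--\eqref{eq:truncneg2}. In the interior the additional factor is bounded by a power of $a_N^{-1}$, and I expect to absorb it by choosing $a_N=N^{-\eta}$ with $\eta$ small.
\end{itemize}

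\emph{Main obstacle: $v$ near $0$ and $1$.} Lemma~\ref{lem:moment_lemma_FN_tilde} gives constants depending on $\varepsilon$, since $\sigma^2_{v,v'}\lesssim|v-v'|/(v'(1-v))$. Near the edges I will use the prefactor $\sqrt{g_N(v)(1-g_N(v))}$ instead. The normalised quantity $\{u(1-u)\}^{-\alpha}\sqrt N(\tilde F_{N,w}(u)-u)$ has $\psi_2$-type tails uniformly in $w$, which follow from the DKW-type weighted inequalities of \cite{shorack_wellner:1986} already used in Lemma~\ref{lem:discretQN'inu}. A union bound over the $N^2$ grid values of $w$ then shows the supremum is $O_\P(\sqrt{\log N})$ at worst. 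A crude maximal bound of this kind is too weak, however. I will instead argue via Lemma~\ref{lem:discretQN'inu}'s Hungarian-embedding approximations: the supremum over $u$ for each $v_j$ is close to a weighted Brownian bridge, whose weighted supremum has Gaussian tails uniformly in $j$. It follows that $\sup_{v\le\varepsilon}\sup_u|\cdot|\lesssim\sqrt{\varepsilon}\cdot O_\P(\sqrt{\log N})$, and this still fails to vanish. I therefore expect to need a sharper chaining in $v$ on $[N^{-2},\varepsilon]$, using the ratio form of $\sigma_{v,v'}$ on dyadic blocks $[2^{-m-1},2^{-m}]$. On each block the metric is scale-invariant, and the prefactor $2^{-m/2}$ is summable, which makes the edge contribution small uniformly in $N$. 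This last step is where I expect most of the work.
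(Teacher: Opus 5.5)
Your route is essentially the paper's. You reduce to the grid with Lemma~\ref{lem:discretQN'inu}. You get asymptotic equicontinuity on $[0,1]\times[\varepsilon,1-\varepsilon]$ from Lemma~\ref{lem:moment_lemma_FN_tilde} and Theorem~2.2.4 of \cite{vandervaart_wellner:weak_convergence}, with finite-dimensional laws from a Lindeberg CLT. You control the strips near $v=0,1$ by dyadic blocks on which the rescaled $v$-metric is uniform, while the prefactor $\sqrt{v(1-v)}$ gives a summable series.

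The differences are cosmetic:
\begin{itemize}
\item The paper removes the $N^{-1/2+\alpha}$ floor by noting that distinct grid points satisfy $\|w-w'\|_\infty\ge N^{-2}$, so $N^{-1/2+\alpha}\le\|w-w'\|_\infty^{\gamma}$ for $\gamma<1/8$. Your cardinality argument also works.
\item The paper concludes with Theorem~5.1 of \cite{dehling_etal:2014} rather than equicontinuity on the whole square.
\item The paper avoids the $\sqrt{\log N}$ loss you ran into at the edges. It never takes a union bound over all $N^2$ values $v_j$. It evaluates only at the $K(\varepsilon,N)\approx 2\log_2 N$ dyadic points $g_N(2^{-k}\varepsilon)$. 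There the $L_{2m}$-norm of $\sup_u|\cdot|$ is $2\sqrt{v(1-v)}$ times the corresponding norm at $v=1/2$, which is bounded uniformly in $N$. It then bounds the maximum over $k$ by the sum of $L_{2m}$-norms, both for these point values and for the within-block increments.
\end{itemize}

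The step that fails is your extension to the full range $\alpha\le 1/32$. You are right that Lemma~\ref{lem:moment_lemma_FN_tilde} needs $\alpha<\min\{1/(2k),1/52\}$ while the chaining needs $k$ large. The paper's proof invokes that lemma for $\alpha\in(0,1/32]$ without comment, so this is a real inconsistency in the source. But the factorisation $\{u(1-u)\}^{-\alpha}=\{u(1-u)\}^{-\alpha_0}\{u(1-u)\}^{\alpha_0-\alpha}$ does not repair it.

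Convergence of the $\alpha_0$-weighted process plus continuous mapping handles $[\delta,1-\delta]\times[0,1]$ for fixed $\delta$. Equations \eqref{eq:truncneg1}--\eqref{eq:truncneg2} handle $u\le 2a_N$ and $u\ge 1-2a_N$. What remains is
\[
\lim_{\delta\downarrow0}\limsup_{N\to\infty}\mathbb{P}\Big(\sup_{0\le v\le1}\ \sup_{2a_N\le u\le\delta}u^{-\alpha}\big|Q_N'(u,g_N(v))\big|>x\Big)=0\quad\text{for all } x>0,
\]
and its mirror image near $u=1$. This does not follow from your plan, for three reasons:
\begin{itemize}
\item On this range the factor $u^{\alpha_0-\alpha}$ reaches $a_N^{-(\alpha-\alpha_0)}\to\infty$ for every admissible $a_N$. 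Choosing $a_N=N^{-\eta}$ with small $\eta$ only slows this divergence.
\item Tightness of the $\alpha_0$-process gives no rate that could absorb the factor. Requiring $|u^{-\alpha_0}Q_N'(u,g_N(v))|\le x\,u^{\alpha-\alpha_0}$ is just the $\alpha$-weighted statement again.
\item A union bound over the grid values $v_j$ does not help either. For a single $v_j$ near $1/2$ the probability above stays bounded away from zero as $N\to\infty$ (it approaches a weighted Brownian-bridge probability), so multiplying by $N^2$ diverges.
\end{itemize}
You therefore need chaining in $v$ at weight $\alpha$, which is exactly what Lemma~\ref{lem:moment_lemma_FN_tilde} does not supply for such $\alpha$.

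The honest fix is to prove the lemma only for $\alpha$ small enough that Lemma~\ref{lem:moment_lemma_FN_tilde} applies with a $k$ compatible with the chaining. For example, take $\alpha<1/52$, $k=9$ and $\gamma\in(1/9,1/8)$. This is all Theorem~\ref{thm:asymptotik} needs. The map $u\mapsto\{u(1-u)\}^{\alpha}/\varphi\circ\Phi^{-1}(1-u)$ is integrable for every $\alpha>0$, and Lemmas~\ref{lem:discretQN'inu} and~\ref{lem:uniform_quantiles} hold for all $\alpha\le 1/32$.
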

\begin{proof}[Proof of Lemma~\ref{lem:konvergenz_QN_prime}] ~~~

In light of Lemma~\ref{lem:discretQN'inu} it suffices to prove process convergence of the completely discretized process $\left[\{g_N(u)(1-g_N(u))\}^{-\alpha} Q_N'(g_N(u),g_N(v))\right]_{0 \leq u,v \leq 1}$. As an intermediate step, we prove convergence of $\{g_N(u)(1-g_N(u))\}^{-\alpha} Q_N'(g_N(u),g_N(v))$ not as a process on the full unit square $[0,1]^2$ but only on $[0,1] \times [\varepsilon, 1-\varepsilon]$ for any fixed $0 < \varepsilon < 1/2$: We claim that
  \begin{equation}
    \label{eq:epsilon_prozesskonvergenz}
    \left[\{g_N(u)(1-g_N(u))\}^{-\alpha}Q_N'(g_N(u),g_N(v))\right]_{0 \leq u \leq 1, \varepsilon \leq v \leq 1-\varepsilon} \rightsquigarrow (L(u,v))_{0 \leq u \leq 1, \varepsilon \leq v \leq 1-\varepsilon}.
  \end{equation}
  
We will begin by proving asymptotic equicontinuity in probability. Let $T_N = \{j N^{-2} ~|~ j = 0, \ldots, N^2\}$ and $w,w'$ be shorthand for $(u,v)$ and $(u', v')$, respectively. Then
  \begin{align}
    \begin{split}
      \label{eq:supbound_gewichtet}
      & \sup_{w,w'} \left|\{g_N(u)(1-g_N(u))\}^{-\alpha}Q_N'(g_N(u),g_N(v)) - \{g_N(u')(1-g_N(u'))\}^{-\alpha}Q_N'(g_N(u'),g_N(v'))\right|                \\
      & \quad \leq \sup_{w,w' \in ([0,1]\times [\varepsilon, 1-\varepsilon])\cap T_N^2 : \|w-w'\|_\infty \leq \delta + 2 N^{-2}} \left|\{u(1-u)\}^{-\alpha}Q_N'(u,v) - \{u'(1-u')\}^{-\alpha}Q_N'(u',v')\right|,
    \end{split}
  \end{align}
  where the supremum in the first line is taken over all $w,w' \in [0,1]\times [\varepsilon, 1-\varepsilon]$ such that $\|w-w'\|_\infty \leq \delta$. The inequality holds because the function $g_N$ moves any point by at most $1/N^2$. Using the identity $xy - ab = x(y-b) + b(x-a)$ and Lemma \ref{lem:moment_lemma_FN_tilde}, we have
  \begin{align}
    \begin{split}
      \label{eq:momentbound_gewichtet}
      & \left\|\{u(1-u)\}^{-\alpha}Q_N'(u,v) - \{u'(1-u')\}^{-\alpha}Q_N'(u',v')\right\|_{L_{2k}}                                                                   \\
      & \quad \leq \left\|\{u(1-u)\}^{-\alpha} \left(\tilde{F}_{N,v}(u) - u\right) - \{u'(1-u')\}^{-\alpha} \left(\tilde{F}_{N,v'}(u') - u'\right)\right\|_{L_{2k}} \\
      & \qquad + \left|\sqrt{v(1-v)} - \sqrt{v'(1-v')}\right| \left\|\{u'(1-u')\}^{-\alpha} \left(\tilde{F}_{N,v'}(u') - u'\right)\right\|_{L_{2k}}                 \\
      & \quad \leq c_k \max\left\{N^{-1/2 + \alpha}, |u-u'|^\gamma + \tilde c_\eps |v-v'|^\gamma\right\}
    \end{split}
  \end{align}
  for all $0 \leq u,u' \leq 1$ and $\varepsilon \leq v, v' \leq 1-\varepsilon$ for any $1/4 \ge \gamma > 0$ which implies $|u-u'|^\gamma \geq |u-u'|^{1/4} \geq|u-u'|^{1/2-\alpha}$ and $|v-v'|^\gamma \geq |v-v'|^{1/4}$. In the last inequality, we have used the fact that $\left\|\{u'(1-u')\}^{-\alpha} \left(\tilde{F}_{N,v'}(u') - u'\right)\right\|_{L_{2k}}$ is bounded uniformly in $u'$ and $v'$ by Lemma~\ref{lem:moment_lemma_FN_tilde} applied with $u = 0$, combined with Lipschitz continuity of $u \mapsto \sqrt{u(1-u)}$ on $[\eps,1-\eps]$.

  Because for any $w,w' \in ([0,1]\times [\varepsilon, 1-\varepsilon])\cap T_N^2$, $w \neq w'$ implies that $\|w-w'\|_\infty \geq N^{-2}$, which is the step size of the grid, we have for such $w, w'$ and all $\alpha \leq 1/4$ that $N^{-1/2 + \alpha} \leq N^{-1/4} = (N^{-2})^{1/8} \leq \|w-w\|_\infty^\gamma$, provided that we chose $\gamma < 1/8$. Eq.\@ \eqref{eq:momentbound_gewichtet} therefore reduces to
  \begin{equation}
    \label{eq:momentbound_gewichtet_2}
    \left\|\{u(1-u)\}^{-\alpha}Q_N'(u,v) - \{u'(1-u')\}^{-\alpha}Q_N'(u',v')\right\|_{L_{2k}} \leq {C_k} \,d(w,w'),
  \end{equation}
  for some constant ${C_k}$ uniform in $0 \leq u,u' \leq 1$ and $\varepsilon \leq v,v' \leq 1-\varepsilon$, where $d(w,w') = |u-u'|^\gamma + |v-v'|^\gamma$. The covering number $N(\varepsilon, d)$ of the semimetric $d$ {defined on the unit square} is bounded by $\mathrm{const}\cdot \varepsilon^{-2/\gamma}$. Hence $N(\varepsilon,d)^{1/(2k)} \leq \mathrm{const} \cdot \varepsilon^{-1/(\gamma k)}$ is integrable with respect to $\varepsilon$ for any $k > 1/\gamma$. Eq.\@ \eqref{eq:momentbound_gewichtet} allows us to apply Theorem 2.2.4 in \cite{vandervaart_wellner:weak_convergence}, which together with Eq.\@ \eqref{eq:supbound_gewichtet} yields
  \begin{align}
    \begin{split}
      \label{eq:moment_sup_bound_gewichtet}
      & \left\|\sup_{{(w,w')\in S(\eps,\delta)}} \left|\{g_N(u)(1-g_N(u))\}^{-\alpha}Q_N'(g_N(u),g_N(v)) - \{g_N(u')(1-g_N(u'))\}^{-\alpha}Q_N'(g_N(u'),g_N(v'))\right|\right\|_{L_{2k}} \\
      & \quad \leq  D\left\{\int_0^\eta \varepsilon^{-1/(\gamma k)} ~\mathrm{d}\varepsilon + {2(\delta+2N^{-2})^\gamma} \eta^{-2/(\gamma k)}\right\}
    \end{split}
  \end{align}
  for some constant $D > 0$ and any $\eta > 0$, where the supremum in the first line is taken over all
  \[
  (w,w') \in S(\eps,\delta) := \Big\{(w,w'): w,w' \in [0,1]\times [\varepsilon, 1-\varepsilon], \|w-w'\|_\infty \le \delta \Big\}
  \]
which implies $d(w,w') \le 2(\delta+2N^{-2})^\gamma$. 
  Let $\mathrm{LHS}$ denote the left-hand side of Eq.\@ \eqref{eq:supbound_gewichtet}. Using $\eta = \eta(N,\delta) = (2(\delta+2N^{-2})^\gamma)^{\gamma k/4}$ in Eq.\@ \eqref{eq:moment_sup_bound_gewichtet} then shows that
  $$
    \lim_{\delta \downarrow 0} \limsup_{n \to \infty} \mathbb{P}(\mathrm{LHS} > x) \leq \lim_{\delta \downarrow 0} \limsup_{n \to \infty} D x^{-2k} \|\mathrm{LHS}\|_{2k}^{2k} = 0,
  $$
  But this is exactly asymptotic equicontinuity in probability of the process on the left-hand side in Eq.\@ \eqref{eq:epsilon_prozesskonvergenz}. The convergence of the finite dimensional projections follows from a standard multivariate Lindeberg theorem, e.g.\@ Proposition 2.27 in \cite{vandervaart:asymptotic_statistics}. This proves Eq.\@ \eqref{eq:epsilon_prozesskonvergenz}.

\bigskip

  It still remains to expand the convergence in Eq.\@ \eqref{eq:epsilon_prozesskonvergenz} to the process indexed in $0 \leq u,v \leq 1$. To this end, we claim that
  \begin{equation}
    \label{eq:epsilon_remainder_1}
    \lim_{\varepsilon \downarrow 0} \limsup_{n \to \infty} \mathbb{P}\left(\sup_{0 \leq v \leq \varepsilon} \sup_{0 \leq u \leq 1} \left|\{g_N(u)(1-g_N(u))\}^{-\alpha}Q_N'(g_N(u),g_N(v))\right| > \eta \right) = 0
  \end{equation}
  and
  \begin{equation}
    \label{eq:epsilon_remainder_2}
    \lim_{\varepsilon \downarrow 0} \limsup_{n \to \infty} \mathbb{P}\left(\sup_{1-\varepsilon \leq v \leq 1} \sup_{0 \leq u \leq 1} \left|\{g_N(u)(1-g_N(u))\}^{-\alpha}Q_N'(g_N(u),g_N(v))\right| > \eta \right) = 0
  \end{equation}
  for any $\eta > 0$. We consider Eq.\@ \eqref{eq:epsilon_remainder_1}, as the claim in Eq.\@ \eqref{eq:epsilon_remainder_2} can be proven analogously. By noting that $\{g_N(u)(1-g_N(u))\}^{-\alpha}Q_N'(g_N(u),0) \equiv 0$ and partitioning the interval $(0,\varepsilon]$ we get
  \begin{align}
    \begin{split}
      \label{eq:epsilon_remainder_3}
      & \sup_{0 \leq v \leq \varepsilon} \sup_{0 \leq u \leq 1} \left|\{g_N(u)(1-g_N(u))\}^{-\alpha}Q_N'(g_N(u),g_N(v))\right|             \\
      & \quad \leq \max_{k = 1, \ldots, K(\varepsilon,N)} \sup_{0 \leq u \leq 1} \left|\{g_N(u)(1-g_N(u))\}^{-\alpha}Q_N'\left(g_N(u),g_N\left(2^{-k}\varepsilon\right)\right)\right| \\
      & \qquad + \max_{k = 1, \ldots, K(\varepsilon,N)} \max_{  2^{-k} \varepsilon \leq v \leq 2^{-k+1} \varepsilon
      } \sup_{0 \leq u \leq 1} \left|\{g_N(u)(1-g_N(u))\}^{-\alpha}\Delta_N(u,v)\right| \\
      &\quad =: \mathrm{I}_N + \mathrm{II}_N,
    \end{split}
  \end{align}
  where $K(\varepsilon,N) = \lceil2 \log_2(N) + \log_2(\varepsilon)\rceil$, and
  $$
  \Delta_N(u,v) := Q_N'\left(g_N(u),g_N\left(v\right)\right) - Q_N'\left(g_N(u),g_N\left(2^{-k}\varepsilon\right)\right).
  $$
  The fact that the maxima over $k$ in Eq.\@ \eqref{eq:epsilon_remainder_3} do not need to run over all $k \in \mathbb{N}$ to make this inequality hold is due to the observation that $g_N\left(2^{-k}\varepsilon\right)$ remains unchanged once $k \geq K(\varepsilon,N)$, since then $2^{-k}\varepsilon \leq N^{-2}$. Let us first consider the term $\mathrm{I}_N$. For any fixed $0 \leq v \leq 1$,
  \begin{align*}
     & \left\|\sup_{0 \leq u \leq 1} \left|\{g_N(u)(1-g_N(u))\}^{-\alpha}Q_N'\left(g_N(u), v\right)\right|\right\|_{L_{2m}}                                   \\
     & \quad = 2 \sqrt{v(1-v)} \left\|\sup_{0 \leq u \leq 1} \left|\{g_N(u)(1-g_N(u))\}^{-\alpha}Q_N'\left(g_N(u),\frac{1}{2}\right)\right|\right\|_{L_{2m}},
  \end{align*}
  since for fixed $v$, the only influence on the distribution and thus the $L_{2m}$ norm is through the factor $\sqrt{v(1-v)}$ in the definition of $Q_N'$. By Eq.\@ \eqref{eq:momentbound_gewichtet_2}, 
  \begin{align*}
     & \left\|\{g_N(u)(1-g_N(u))\}^{-\alpha}Q_N'\left(g_N(u),\frac{1}{2}\right) - \{g_N(u')(1-g_N(u'))\}^{-\alpha}Q_N'\left(g_N(u'),\frac{1}{2}\right)\right\|_{L_{2m}} \\
     & \quad \leq {C_m} |g_N(u)-g_N(u')|^\gamma
  \end{align*}
  for any $\gamma < 1/8$ and a constant ${C_m}$ uniform in $0 \leq u \leq 1$. By using Eq.\@ \eqref{eq:moment_sup_bound_gewichtet} with $\delta = 2$ and $\varepsilon = 1/4$, we see that
  $$
    \left\|\sup_{0 \leq u \leq 1} \left|\{g_N(u)(1-g_N(u))\}^{-\alpha}Q_N'\left(g_N(u),\frac{1}{2}\right)\right|\right\|_{L_{2m}} \leq K_m
  $$
  for some constant ${K_m}$ uniform in $N$. Finally, it is easy to see that
  $$
    \sqrt{g_N\left(2^{-k}\varepsilon\right)\left\{1-g_N\left(2^{-k}\varepsilon\right)\right\}} \leq \sqrt{2^{-k} \varepsilon + N^{-2}}
  $$
  Therefore, for some sufficiently large $m$,
  \begin{align*}
    \|\mathrm{I}_N\|_{L_{2m}} & \leq \sum_{k=1}^{K(\varepsilon,N)} \left\|\sup_{0 \leq u \leq 1} \left|\{g_N(u)(1-g_N(u))\}^{-\alpha}Q_N'\left(g_N(u),g_N\left(2^{-k}\varepsilon\right)\right)\right|\right\|_{L_{2m}}     \\
    & \leq \sum_{k=1}^{K(\varepsilon,N)}  2 \sqrt{2^{-k}\varepsilon + N^{-2}} \left\|\sup_{0 \leq u \leq 1} \left|\{g_N(u)(1-g_N(u))\}^{-\alpha}Q_N'\left(g_N(u),\frac{1}{2}\right)\right|\right\|_{L_{2m}} \\
                              & \lesssim \sqrt{\varepsilon} \sum_{k=1}^\infty \sqrt{2^{-k}} + N^{-1} \lceil 2\log_2 N + \log_2 \varepsilon\rceil,
  \end{align*}
  where $\lesssim$ is hiding constants depending only on $m$, and so
  \begin{equation}
    \label{eq:IN_zero_in_prob}
    \lim_{\varepsilon \downarrow 0} \limsup_{n \to \infty} \mathbb{P}(\mathrm{I}_N > \eta/2) = 0
  \end{equation}
  for any $\eta > 0$ by Markov's inequality.

  Let us now consider $\mathrm{II}_N$. We can bound $\Delta_N(u,v)$ by observing that for $v \in [2^{-k}\eps,2^{-k+1}\eps]$ 
  \begin{align}
    \begin{split}
      \label{eq:delta_bound}
      \Delta_N(u,v) & \leq \left|\sqrt{g_N(v)(1-g_N(v))} - \sqrt{g_N\left(2^{-k}\varepsilon\right)\left\{1-g_N\left(2^{-k}\varepsilon\right)\right\}}\right| \left|\sqrt{N}\left\{\tilde{F}_{N,g_N\left(2^{-k}\varepsilon\right)}(g_N(u)) - g_N(u)\right\}\right|             \\
      & \quad + \sqrt{{g_N(v)(1-g_N(v))}
      } \left|\sqrt{N}\left\{\tilde{F}_{N,v}(g_N(u)) - g_N(u)\right\} - \sqrt{N}\left\{\tilde{F}_{N,g_N\left(2^{-k}\varepsilon\right)}(g_N(u)) - g_N(u)\right\}\right| \\
      & \lesssim \sqrt{\left(2^{-k} \varepsilon + N^{-2}\right)} \left|\sqrt{N}\left\{\tilde{F}_{N,g_N\left(2^{-k}\varepsilon\right)}(g_N(u)) - g_N(u)\right\}\right|                          \\
      & \qquad + \sqrt{{2^{-k}} \varepsilon + N^{-2}} \left|\sqrt{N}\left\{\tilde{F}_{N,v}(g_N(u)) - g_N(u)\right\} - \sqrt{N}\left\{\tilde{F}_{N,g_N\left(2^{-k}\varepsilon\right)}(g_N(u)) - g_N(u)\right\}\right| \\
      & \lesssim \sqrt{2^{-k} \varepsilon} \left|\sqrt{N}\left\{\tilde{F}_{N,g_N\left(2^{-k}\varepsilon\right)}(g_N(u)) - g_N(u)\right\}\right|                           \\
      & \qquad + \sqrt{2^{-k} \varepsilon} \left|\sqrt{N}\left\{\tilde{F}_{N,v}(g_N(u)) - g_N(u)\right\} - \sqrt{N}\left\{\tilde{F}_{N,g_N\left(2^{-k}\varepsilon\right)}(g_N(u)) - g_N(u)\right\}\right|,
    \end{split}
  \end{align}
where $\lesssim$ is hiding universal constants and we used the crude bound
\[
\Big| \sqrt{g_N(v)(1-g_N(v))} - \sqrt{g_N\left(2^{-k}\varepsilon\right)\left\{1-g_N\left(2^{-k}\varepsilon\right)\right\}} \Big| \lesssim \sqrt{2^{-k} \eps + N^{-2}}
\]
for the first term. We obtained the third inequality in Eq.\@ \eqref{eq:delta_bound} by observing $N^{-2} \lesssim 2^{-k}\varepsilon$ for all $k \leq K(\varepsilon, N)$. Writing
  $$
    T_k(u,v) = \left\{g_N(u)(1-g_N(u))\right\}^{-\alpha} \sqrt{N} \left[\left\{\tilde{F}_{N,v}(g_N(u)) - g_N(u)\right\} - \left\{\tilde{F}_{N,g_N\left(2^{-k}\varepsilon\right)}(g_N(u)) - g_N(u)\right\}\right],
  $$
  it therefore suffices to consider the processes $(u,v) \mapsto \sqrt{2^{-k} \varepsilon}\left|T_k(u,v)\right|$, since the first summand on the right-hand side in Eq.\@ \eqref{eq:delta_bound} can be treated the same way as $I_N$.

  Consider some $N^{-2} < \delta < 1/10$. For any $0 \leq u \leq 1$ and $\delta < v \leq 2\delta$, it holds that $\delta \leq g_N(\delta) \leq 2\delta$, $v \leq g_N(v) \leq 2\delta + N^{-2} < 3\delta$ and $|g_N(v) - g_N(\delta)| \leq 2\delta$. Furthermore, for $0 < v,v' < 1$, it can be easily proven that the variance $\sigma_{v,v'}^2$ {from Eq.\@ \eqref{eq:identities_variance_covariance_correlation}} is strictly smaller than $2$. Thus, by Eq.\@ \eqref{eq:l2k_norm_nichts_gleich}, there is a constant $C$ and some sufficiently small $\gamma > 0$ such that for $0 \leq u, u' \leq 1$ and $\delta \leq v,v' \leq 2\delta$,
  \begin{align}
    \begin{split}
      \label{eq:l2m_bound_deltaskaliert_1}
      & \delta^{1/4} \sqrt{N}\left\|\frac{\tilde{F}_{N,g_N(v)}(g_N(u)) - g_N(u)}{\{g_N(u)(1 - g_N(u))\}^{\alpha}} -  \frac{\tilde{F}_{N,g_N(v')}(g_N(u')) - g_N(u')}{\{g_N(u')(1 - g_N(u'))\}^{\alpha}}\right\|_{L_{2m}} \\
      & \quad \leq \delta^{1/4} C\,\max\left\{N^{-1/4}, |g_N(u) - g_N(u')|^\gamma + \sigma_{g_N(v),g_N(v')}^{1/2}\right\}          \\
      & \quad \leq C\left[\max\left\{N^{-1/4}, |g_N(u) - g_N(u')|^\gamma \right\} + \max\left\{N^{-1/4}, \delta^{1/4}\sigma_{g_N(v),g_N(v')}^{1/2}\right\}\right].
    \end{split}
  \end{align}
By Eq.\@ \eqref{eq:sigma2_formula_2}, we have 
$$
\sigma_{g_N(v), g_N(v')}^2 \leq |g_N(v) - g_N(v')| \frac{4}{g_N(v') [1 - g_N(v)]}.
$$
Since $\delta \leq v, v' \leq 2 \delta$ and $\delta < 1/10$, we have $g_N(v)[1 - g_N(v')] \geq \delta [1 - 1/10 - N^{-2}] \geq 7\delta/20$ for all $N \geq 2$. This also implies
$$
\delta^{1/4}\sigma_{g_N(v), g_N(v')}^{1/2} \leq \tilde{C} |g_N(v) - g_N(v')|^{1/4}
$$
with $\tilde{C} = (80/7)^{1/4}$. Eq.\@ \eqref{eq:l2m_bound_deltaskaliert_1} therefore reduces to
  \begin{align*}
     & \delta^{1/4} \sqrt{N}\left\|\frac{\tilde{F}_{N,g_N(v)}(g_N(u)) - g_N(u)}{\{g_N(u)(1 - g_N(u))\}^{\alpha}} -  \frac{\tilde{F}_{N,g_N(v')}(g_N(u')) - g_N(u')}{\{g_N(u')(1 - g_N(u'))\}^{\alpha}}\right\|_{L_{2m}} \\        
     & \quad \leq C\left[\max\left\{N^{-1/4}, |g_N(u) - g_N(u')|^\gamma \right\} + \max\left\{N^{-1/4}, |g_N(v) - g_N(v')|^{1/4}\right\}\right]                                                                         \\
     & \quad \leq C\left[\max\left\{N^{-1/4}, |g_N(u) - g_N(u')|^\gamma \right\} + \max\left\{N^{-1/4}, |g_N(v) - g_N(v')|^\gamma\right\}\right]
  \end{align*}
  if we choose $\gamma < 1/4$. The constant $C$ is not the same as in Eq.\@ \eqref{eq:l2m_bound_deltaskaliert_1} but still uniform in $N, u, u', v, v'$. If $w := (u,v) \neq (u', v') =: w'$, then at least one of the distances $|g_n(u) - g_N(u')$ or $|g_N(v) - g_N(v')|$ must be greater than or equal to $N^{-2}$. On the other hand, if $w = w'$, then the left-hand side of the above set of inequalities is $0$. Therefore,
  \begin{align}
    \begin{split}
      \label{eq:l2m_bound_deltaskaliert_2}
      & \delta^{1/4} \sqrt{N}\left\|\frac{\tilde{F}_{N,g_N(v)}(g_N(u)) - g_N(u)}{\{g_N(u)(1 - g_N(u))\}^{\alpha}} -  \frac{\tilde{F}_{N,g_N(v')}(g_N(u')) - g_N(u')}{\{g_N(u')(1 - g_N(u'))\}^{\alpha}}\right\|_{L_{2m}} \\
      & \quad \leq C\left\{|g_N(u) - g_N(u')|^\gamma + |g_N(v) - g_N(v')|^\gamma\right\}
    \end{split}
  \end{align}
  for some sufficiently small $\gamma > 0$ (this is essentially the same argument as in Eq.\@ \ref{eq:momentbound_gewichtet_2}). By the same arguments that led to Eq.\@ \eqref{eq:moment_sup_bound_gewichtet}  we obtain
  $$
    \delta^{1/4} \sqrt{N}\left\|\sup_{w,w' \in [0,1] \times [\delta, 2\delta]}\frac{\tilde{F}_{N,g_N(v)}(g_N(u)) - g_N(u)}{\{g_N(u)(1 - g_N(u))\}^{\alpha}} -  \frac{\tilde{F}_{N,g_N(v')}(g_N(u')) - g_N(u')}{\{g_N(u')(1 - g_N(u'))\}^{\alpha}}\right\|_{L_{2m}} \leq R
  $$
  for some sufficiently large $m \in \mathbb{N}$ and a constant $R$ independent of $N$ and $\delta$. Hence,
  \begin{align*}
     & \left\|\max_{k = 1, \ldots, K(\varepsilon, N)} \sup_{0 \leq u \leq 1} \sup_{2^{-k}\varepsilon \leq v \leq 2^{-k+1}\varepsilon} \sqrt{2^{-k} \varepsilon} \,|T_k(u,v)|\right\|_{L_{2m}}                                                              \\
     & \quad \leq \left\|\max_{k = 1, \ldots, K(\varepsilon, N)} \left(2^{-k} \varepsilon\right)^{1/4} \sup_{0 \leq u \leq 1} \sup_{2^{-k}\varepsilon \leq v \leq 2^{-k+1}\varepsilon} \left(2^{-k} \varepsilon\right)^{1/4} \,|T_k(u,v)|\right\|_{L_{2m}} \\
     & \quad \leq  \sum_{k=1}^\infty \left(2^{-k} \varepsilon\right)^{1/4} \left\| \sup_{0 \leq u \leq 1} \sup_{2^{-k}\varepsilon \leq v \leq 2^{-k+1}\varepsilon} \left(2^{-k} \varepsilon\right)^{1/4} \,|T_k(u,v)|\right\|_{L_{2m}}                      \\
     & \quad \leq \varepsilon^{1/4}  R \sum_{k=1}^\infty 2^{-k/4},
  \end{align*}
  and this bound converges to $0$ for $\varepsilon \to 0$. By Eq.\@ \eqref{eq:delta_bound} and the methods we used for $\mathrm{I}_N$, we therefore have
  \begin{equation}
    \label{eq:IIN_zero_in_prob}
    \lim_{\varepsilon \downarrow 0} \limsup_{n \to \infty} \mathbb{P}(\mathrm{II}_N > \eta/2) = 0
  \end{equation}
  for any $\eta > 0$. Together with Eqs.\@ \eqref{eq:IN_zero_in_prob} and \eqref{eq:epsilon_remainder_3}, this proves Eq.\@ \eqref{eq:epsilon_remainder_1}, and Eq.\@ \eqref{eq:epsilon_remainder_2} can be proven analogously.

  Now let us write
  \begin{align*}
    Y_N               & = \left(\{g_N(u)(1-g_N(u))\}^{-\alpha}Q_N'(g_N(u),g_N(v))\right)_{(u,v) \in [0,1]^2},                                   \\
    X_{N,\varepsilon} & = Y_N \cdot \textbf{1}_{[0,1] \times [\varepsilon, 1-\varepsilon]},\\
    L_\varepsilon     & = L \cdot \textbf{1}_{[0,1] \times [\varepsilon, 1-\varepsilon]}.                                                      
  \end{align*}
  We have shown in Eq.\@ \eqref{eq:epsilon_prozesskonvergenz} that $X_{N,\varepsilon}$ converges in distribution to $L_\varepsilon$ for every fixed $\varepsilon > 0$. Furthermore, Eqs.\@ \eqref{eq:epsilon_remainder_1} and \eqref{eq:epsilon_remainder_2} combined can be restated as
  $$
    \lim_{\varepsilon \downarrow 0} \limsup_{N \to \infty} \mathbb{P}\left(\sup_{0 \leq u,v \leq 1} |X_{N,\varepsilon}(u,v) - Y_N(u,v)| > \eta\right) = 0
  $$
  for any $\eta > 0$. Theorem 5.1 in \cite{dehling_etal:2014} now implies that $Y_N \rightsquigarrow L$ as $N \to \infty$. In light of Lemma~\ref{lem:discretQN'inu} this also proves Eq.\@ \eqref{eq:step3_3}.
\end{proof}

\begin{lemma}
\label{lem:uniform_quantiles}
Let $(a_N)_{N \in \mathbb{N}}$, be a sequence of positive numbers which satisfies $a_N = O(N^{-\kappa})$ for some $\kappa > 0$, and suppose that there exist two numbers $0 < \alpha \le 1/32$ and $\varepsilon > 0$ such that $N a_N^{1 + 2\alpha} \geq N^\varepsilon$ for almost all $N$. Then 
  $$
    \sup_{a_N \leq t \leq 1 - a_N} \sup_{0 \leq v \leq 1} \left| \int_{a_N}^t \sqrt{N}\left(F_{N,{g_N(v)}}^{-1}(u) - \Phi_{{g_N(v)}\{1-{g_N(v)}\}}^{-1}(u)\right) - \frac{Q_N'(u, g_N(v))}{\varphi \circ \Phi^{-1}(u)} ~\mathrm{d}u\right| \xrightarrow[N \to \infty]{\mathbb{P}} 0.
  $$
\end{lemma}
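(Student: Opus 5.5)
We fix $v$ with $v_j := g_N(v)\in(0,1)$; the cases $v_j\in\{0,1\}$ are trivial because both sides then vanish. The plan is to reduce the integrand to the uniform quantile process and then use the Bahadur--Kiefer type approximation from Lemma~\ref{lem:approxQNQn'}. Since $F_{N,v_j}^{-1}(u) = \Phi_{v_j(1-v_j)}^{-1}(\tilde F_{N,v_j}^{-1}(u))$ and $\Phi_{v_j(1-v_j)}^{-1} = \sqrt{v_j(1-v_j)}\,\Phi^{-1}$, the mean value theorem gives
\[
\sqrt N\bigl(F_{N,v_j}^{-1}(u) - \Phi_{v_j(1-v_j)}^{-1}(u)\bigr) = \frac{Q_N(u,v_j)}{\varphi\circ\Phi^{-1}(\xi_{N,u})}
\]
for some $\xi_{N,u}$ between $u$ and $\tilde F_{N,v_j}^{-1}(u)$; this is \eqref{eq:quantilreduktion_2}. (The sign convention has to be matched to the statement, since Lemma~\ref{lem:approxQNQn'} controls $Q_N+Q_N'$.) The integrand therefore splits into three terms: $Q_N/\varphi\circ\Phi^{-1}(u)$ plus $Q_N'/\varphi\circ\Phi^{-1}(u)$, which is the Bahadur--Kiefer remainder; the term $R_{n,3}$ coming from replacing $\xi_{N,u}$ by $u$; and the term left over once the first difference has been paired with the correct sign. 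We then bound each integral over $[a_N,1-a_N]$ uniformly in $t$ and in $j=1,\dots,N^2-1$, treating all $j$ at once through union bounds with $N^2$ factors, as in the earlier lemmas.

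\emph{Step 1 (Bahadur--Kiefer term).} Write
\[
\int_{a_N}^t \frac{\{u(1-u)\}^{\alpha}}{\varphi\circ\Phi^{-1}(u)}\,\{u(1-u)\}^{-\alpha}\bigl(Q_N+Q_N'\bigr)(u,v_j)\,\mathrm{d}u .
\]
The weight $\{u(1-u)\}^{\alpha}/\varphi\circ\Phi^{-1}(u)$ is integrable on $(0,1)$. The factor multiplying it is $o_\P(1)$ uniformly over $u\in[a_N,1-a_N]$ and all $v$ by Lemma~\ref{lem:approxQNQn'}, whose hypotheses $a_N\to0$ and $Na_N\to\infty$ follow from $Na_N^{1+2\alpha}\ge N^\varepsilon$. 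Hence the first term is $o_\P(1)$.

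\emph{Step 2 (Taylor remainder $R_{n,3}$).} By the bound \eqref{eq:3logN_bound} from the proof of Lemma~\ref{lem:approxQNQn'}, with probability tending to one we have $|\xi_{N,u}-u|\le \rho_N := \sqrt{3\log N/N}$ for all $u$ and $j$. Since $u\ge a_N\gg\rho_N$ (again from $Na_N^{1+2\alpha}\ge N^\varepsilon$), $\xi_{N,u}$ stays within a constant factor of $u$ in the sense $\xi(1-\xi)\asymp u(1-u)$. The derivative of $1/\varphi\circ\Phi^{-1}$ is $\Phi^{-1}(u)/\varphi^2(\Phi^{-1}(u))$, so we get
\[
\Bigl|\frac1{\varphi\circ\Phi^{-1}(\xi_{N,u})}-\frac1{\varphi\circ\Phi^{-1}(u)}\Bigr| \lesssim \rho_N\,\frac{\sqrt{\log(1/u(1-u))}}{\varphi\circ\Phi^{-1}(u)^{2}}\lesssim \frac{\rho_N\sqrt{\log N}}{u(1-u)\,\varphi\circ\Phi^{-1}(u)} ,
\]
using $\varphi\circ\Phi^{-1}(u)\gtrsim u(1-u)$ up to logarithmic factors. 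Combining this with $|Q_N(u,v_j)|\lesssim \{u(1-u)\}^{\alpha}$ times an $O_\P(1)$ factor, which follows from Lemma~\ref{lem:konvergenz_QN_prime} together with Lemma~\ref{lem:approxQNQn'}, the integral of $R_{n,3}$ is bounded by
\[
O_\P\Bigl(\rho_N\sqrt{\log N}\int_{a_N}^{1-a_N}\frac{\{u(1-u)\}^{\alpha-1}}{\varphi\circ\Phi^{-1}(u)}\,\mathrm{d}u\Bigr) = O_\P\bigl(\rho_N (\log N)\, a_N^{\alpha-1}\bigr),
\]
again up to logarithmic factors. This is $o_\P(1)$ precisely when $N a_N^{2-2\alpha}\gg \mathrm{polylog}(N)$. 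That condition is not implied by the one stated, so the crude pointwise bound needs sharpening. We would first try to use the finer estimate $|\xi_{N,u}-u|\le|\tilde F^{-1}_{N,v_j}(u)-u|\lesssim \sqrt{u(1-u)\log N/N}$, obtained from weighted (Shorack--Wellner, Chapter 11) bounds on the uniform quantile process on $[a_N,1-a_N]$, again with union bounds over $j$. The gain factor $\sqrt{u(1-u)}$ then makes the integrand $\{u(1-u)\}^{\alpha-1/2}\times$ logarithmic factors, and the resulting requirement is of the form $N a_N^{1+2\alpha}\ge N^\varepsilon$, matching the hypothesis.

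\textbf{Main obstacle.} Step 2 is the delicate part: controlling the quantile-process oscillation near the boundaries $a_N$ and $1-a_N$ uniformly over the $N^2$ grid values $v_j$, with exponential tail bounds strong enough to absorb the factor $N^2$ from the union bound.
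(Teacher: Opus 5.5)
Your plan is correct, but for the Taylor remainder $R_{n,3}$ you take a different route from the paper. Your Step~1 makes explicit something the paper's proof omits, since the paper only bounds $R_{n,3}$. Your sign remark is right and worth stating outright. By Lemma~\ref{lem:approxQNQn'} we have $Q_N\approx -Q_N'$, so the literal integrand equals $(Q_N+Q_N')/\varphi\circ\Phi^{-1}(u)+R_{n,3}-2Q_N'/\varphi\circ\Phi^{-1}(u)$. After integration the last term has a non-degenerate Gaussian limit. So the lemma holds with $+Q_N'$, and with that sign there is no ``third term''.

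\textbf{How the paper handles $R_{n,3}$.} The paper never estimates the difference of reciprocals quantitatively. It proceeds in four steps.
\begin{enumerate}
\item It shows that $\Omega_N=\{\tilde F^{-1}_{N,g_N(v)}(u)\ge u/2 \text{ for all } v\in(0,1),\ u\in[a_N,1/2]\}$ has probability tending to one. This uses the Hungarian construction of the quantile process, a weighted Brownian-bridge inequality, and a union bound over the $N^2$ grid values; this is where $Na_N^{1+2\alpha}\ge N^\varepsilon$ enters.
\item On $\Omega_N$ it bounds the difference of reciprocals by a multiple of $1/\varphi\circ\Phi^{-1}(u/2)$.
\item On $[a_N,\varepsilon]$ it uses $\sup_u\{u(1-u)\}^{-\eta}|Q_N|=O_\P(1)$ together with integrability of the weight. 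On $[\varepsilon,1/2]$ it uses uniform continuity plus \eqref{eq:3logN_bound}.
\item It lets $\varepsilon\downarrow 0$.
\end{enumerate}
So the paper needs only a one-sided relative bound on $\tilde F^{-1}$ and no rate.

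\textbf{What your route needs instead.} Your mean-value argument requires the two-sided relative bound $|\tilde F^{-1}_{N,v_j}(u)-u|\lesssim\sqrt{u(1-u)\log N/N}$, uniformly in $u\in[a_N,1-a_N]$ and in $j$. This bound is easy to obtain from binomial/Beta tail bounds for the order statistics $U_{(k)}$, $k\ge Na_N$, with a union bound over $k\le N$ and $j\le N^2$. It only needs $Na_N\gg\log N$. It also gives $|Q_N|\lesssim\sqrt{u\log N}$, so the remainder integral is $O_\P\bigl(N^{-1/2}(\log N)^2\bigr)$ without invoking Lemma~\ref{lem:konvergenz_QN_prime}. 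Your route therefore buys an explicit rate under a weaker condition, at the price of a slightly stronger probabilistic input.

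\textbf{A correction to your crude step.} The claim $a_N\gg\rho_N$ does \emph{not} follow from $Na_N^{1+2\alpha}\ge N^\varepsilon$; take $a_N=N^{-0.9}$. It also fails in the actual application, where $a_N=N^{-\eta}$ with $\eta>1/2$. With only $|\xi_{N,u}-u|\le\rho_N$ you cannot even assert $\xi_{N,u}\asymp u$ near $a_N$. So relative control of $\tilde F^{-1}(u)-u$ (the paper's $\Omega_N$, or your sharpened bound) is indispensable, not merely a way to improve the rate.
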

\begin{proof}[Proof of Lemma~\ref{lem:uniform_quantiles}]
We first consider the case $v = 0$, the case $v=1$ can be dealt with in the same way. Then $g_N(v) = 0$. By definition of $Q_N$, $Q_N(u,0) \equiv 0$. Also, $F_{N,{0}}^{-1}(u) = 0$ for all $u \in (0,1)$ almost surely and $\Phi_0^{-1}(u) = 0~\forall u \in (0,1)$. Thus for $v =0,1$ the difference inside the absolute vales is identically zero and it suffices to consider the supremum over $v \in (0,1)$. Then by definition of $g_N$, we have $g_N(v) \in \{1/N^2,\dots,1-1/N^2\}$. This will be utilized throughout the remaining proof.
  
Let us consider the case $a_N \leq t \leq 1/2$, as $1/2 \leq t \leq 1-a_N$ can be dealt with analogously. Define the event
$$
\Omega_N = \left\{u/2 \leq \tilde{F}_{N,g_N(v)}^{-1}(u) \textrm{ for all } 0 < v < 1 \textrm{ and } a_N \leq u \leq 1/2\right\}.
$$
Our first goal is to show that $\mathbb{P}(\Omega_N) \to 1$ as $N \to \infty$. Suppose that there exists some $v \in (0,1)$ and $u^* \in [a_N, 1/2]$ with $\tilde{F}_{N,g_N(v)}^{-1}(u^*) < u^*/2$. This implies
$$
\sqrt{N}\left|\tilde{F}_{N,g_N(v)}^{-1}(u^*) - u^*\right| \geq u^*/2,
$$
and since $u^* \geq a_N = N^{-\gamma}$, it must then also be true that
$$
\sup_{u \in [a_N, 1/2]} \sqrt{N}\left|\frac{\tilde{F}_{N,g_N(v)}^{-1}(u) - u}{u^{1/2 - \alpha}}\right| \geq \sqrt{N}\left|\frac{\tilde{F}_{N,g_N(v)}^{-1}(u^*) - u^*}{(u^*)^{1/2 - \alpha}}\right| > \sqrt{N} (u^*)^{1/2 + \alpha}/2 \geq \frac{1}{2}\sqrt{N}a _N^{1/2 + \alpha}.
$$
Since $g_N(v)$ can take at most $N^2$ values $v_j$, $j = 1, \ldots, N^2-1$, it is helpful to first consider one fixed value $g_N(v) = v_j \in (0,1)$. Let us write $V_N(u) = \sqrt{N}\big(\tilde{F}_{N,v_j}^{-1}(u) - u\big)$. $V_N$ is equal in distribution to the uniform quantile process. Therefore, by the Hungarian construction of the uniform quantile process \citep[Chapter 12, Section 2, Theorem 1 in][]{shorack_wellner:1986} there exist versions $V_N^*$ of $V_N$, along with Brownian bridges $B_N$, $N \in \mathbb{N}$ and positive constants $N_0, c_1, c_2, c_3 > 0$, such that
\begin{equation}
    \label{eq:dkw_quantile}
\mathbb{P}\left(\sup_{0 \leq u \leq 1} \left|V_N^*(u) - B_N(u)\right| \geq (c_1 \log N + x) N^{-1/2}\right) \leq c_2 \exp\left(-c_3 x\right)
\end{equation}
for all $N \in \mathbb{N}$ and $0 \leq x$. By the triangle inequality,
\begin{align}
\begin{split}
    \label{eq:weighted_triangle}
&\mathbb{P}\left(\sup_{u \in [a_N, 1/2]} \sqrt{N}\left|\frac{\tilde{F}_{N,v_j}^{-1}(u) - u}{u^{1/2 - \alpha}}\right| \geq \frac{1}{2}\sqrt{N}a _N^{1/2 + \alpha}\right) \\
&\leq \mathbb{P}\left(\sup_{u \in [a_N, 1/2]} \left|\frac{V_N^*(u) - B_N(u)}{u^{1/2 - \alpha}}\right| \geq \frac{1}{4}\sqrt{N}a _N^{1/2 + \alpha}\right) + \mathbb{P}\left(\sup_{u \in [a_N, 1/2]} \left|\frac{B_N(u)}{u^{1/2 - \alpha}}\right| \geq \frac{1}{4}\sqrt{N}a _N^{1/2 + \alpha}\right).
\end{split}
\end{align}
Let us consider the first probability on the right-hand side. Since $a_N \leq u$ it holds that $|V_N^*(u) - B_N(u)|/u^{1/2 - \alpha} \leq |V_N^*(u) - B_N(u)|/a_N^{1/2 - \alpha}$, and so 
\begin{align*}
&\mathbb{P}\left(\sup_{u \in [a_N, 1/2]} \left|\frac{V_N^*(u) - B_N(u)}{u^{1/2 - \alpha}}\right| \geq \frac{1}{4}\sqrt{N}a _N^{1/2 + \alpha}\right) \\
&\leq  \mathbb{P}\left(\sup_{u \in [a_N, 1/2]} \left|V_N^*(u) - B_N(u)\right| \geq \frac{1}{4}\sqrt{N}a _N\right) \\
&=  \mathbb{P}\left(\sup_{u \in [a_N, 1/2]} \left|V_N^*(u) - B_N(u)\right| \geq (c_1 \log N + x_N) N^{-1/2}\right),
\end{align*}
where $x_N = N a_N/4 - c_1 \log N$. It follows from our assumptions (noting that $a_N^{1+2\alpha} \leq a_N$ for almost all $N$) on the sequence $a_N$ that
$$
x_N = N a_N / 4 - c_1 \log N \geq N^\varepsilon/4 - N^{\varepsilon/2} \geq N^{\varepsilon/2}
$$
for sufficiently large $N$. Hence, by Eq.\@ \eqref{eq:dkw_quantile},
\begin{equation}
\label{eq:weighted_dkw_1}
    \mathbb{P}\left(\sup_{u \in [a_N, 1/2]} \left|\frac{V_N^*(u) - B_N(u)}{u^{1/2 - \alpha}}\right| \geq \frac{1}{4}\sqrt{N}a _N^{1/2 + \alpha}\right) \leq c_2 \exp\left(-c_3 N^{\varepsilon/2}\right)
\end{equation}
for almost all $N$. Now let us consider the second probability on the right-hand side of Eq.\@ \eqref{eq:weighted_triangle}. Since the processes $B_N$, $N \in \mathbb{N}$, are all Brownian bridges, we can use standard tail bounds for the weighted Brownian bridge. First, observe that
\begin{align}
\begin{split}
    \label{eq:weighted_bb_general}
    \mathbb{P}\left(\sup_{u \in [a_N, 1/2]} \left|\frac{B_N(u)}{u^{1/2 - \alpha}}\right| \geq \frac{1}{4}\sqrt{N}a _N^{1/2 + \alpha}\right) 
    &\leq \mathbb{P}\left(\sup_{0 \leq u \leq 1/2} \left|\frac{B_N(u)}{\{u(1-u)\}^{1/2 - \alpha}}\right| \geq \frac{1}{4}\sqrt{N}a _N^{1/2 + \alpha}\right).
\end{split}
\end{align}
Now write $\lambda_N = \sqrt{N}a _N^{1/2 + \alpha}/4$ and use Inequality 2 in Chapter 11, Section 2, of \cite{shorack_wellner:1986} with, in their notation, $a = 0, b= 1/2, \delta = 1/2, q(u) = \{u(1-u)\}^{1/2-\alpha}$, to bound 
\begin{align}
\begin{split}
    \label{eq:weighted_bb_general_2}
    \mathbb{P}\left(\sup_{0 \leq u \leq 1/2} \left|\frac{B_N(u)}{\{u(1-u)\}^{1/2 - \alpha}}\right| \geq \frac{1}{4}\sqrt{N}a _N^{1/2 + \alpha}\right) &\leq 8 \int_0^{1/2} t^{-1} \exp\left(-\lambda_N^2 \frac{\{t(1-t)\}^{1 - 2\alpha}}{16 t}\right) ~\mathrm{d}t \\
    &\leq 8 \int_0^{1/2} t^{-1} \exp\left(-\frac{\lambda_N^2}{32} t^{-2\alpha}\right) ~\mathrm{d}t .
\end{split}
\end{align}
Some algebra shows that for any $L > 0$ and $0 < t < 1$, $t^{-1} \exp(-L t^{-2\alpha}) \leq \exp(-L)$ is equivalent to $L \geq (\log t )/(1 - t^{-2\alpha})$. The expression on the right-hand side is continuous on $(0,1]$ and converges to $0$ as $t \downarrow 0$. It therefore attains a maximum value $m_\alpha < \infty$ on the compact set $[0,1]$, and so $t^{-1} \exp(-Lt^{-2\alpha}) \leq \exp(-L)$ certainly holds for all $t \in [0,1/2]$ if $L \geq m_\alpha$. Since $\lambda_N^2 \to \infty$ due to our assumptions on the sequence $a_N$, this implies for all sufficiently large $N$
$$
\int_0^{1/2} t^{-1} \exp\left(-\frac{\lambda_N^2}{32} t^{-2\alpha}\right) ~\mathrm{d}t \leq \frac{1}{2} \exp\left(-\frac{\lambda_N^2}{32}\right).
$$
Further, by the definition $\lambda_N = \sqrt{N}a _N^{1/2 + \alpha}/4$ and our assumptions on $a_N$ we have $\lambda_N^2 \ge N^\eps$ for almost all $N$, which together with Eqs.\@ \eqref{eq:weighted_bb_general} and \eqref{eq:weighted_bb_general_2} implies
\begin{equation}
\label{eq:weighted_bb}
    \mathbb{P}\left(\sup_{u \in [a_N, 1/2]} \left|\frac{B_N(u)}{u^{1/2 - \alpha}}\right| \geq \frac{1}{4}\sqrt{N}a _N^{1/2 + \alpha}\right) \leq 8 \exp\left(- \frac{N^\varepsilon}{32}\right)
\end{equation}
for almost all $N$. Now from Eqs.\@ \eqref{eq:weighted_triangle}, \eqref{eq:weighted_dkw_1} and \eqref{eq:weighted_bb} it follows that
\begin{align*}
    \mathbb{P}\left(\Omega_N^C\right) &\leq \mathbb{P}\left(\exists j = 1, \ldots, N^2-1 : \sup_{u \in [a_N, 1/2]} \sqrt{N}\left|\frac{\tilde{F}_{N,v_j}^{-1}(u) - u}{u^{1/2 - \alpha}}\right| \geq \frac{1}{2}\sqrt{N}a _N^{1/2 + \alpha}\right) \\
    &\leq \sum_{j=1}^{N^2-1} \mathbb{P}\left(\sup_{u \in [a_N, 1/2]} \sqrt{N}\left|\frac{\tilde{F}_{N,v_j}^{-1}(u) - u}{u^{1/2 - \alpha}}\right| \geq \frac{1}{2}\sqrt{N}a _N^{1/2 + \alpha}\right) \\
    &\lesssim N^2 \left[c_2 \exp\left(-c_3 N^{\varepsilon/2}\right) + 8 \exp\left(-\frac{N^\varepsilon}{32}\right)\right],
\end{align*}
where $\lesssim$ is hiding a universal constant, and so $\mathbb{P}(\Omega_N) \to 1$ as $N \to \infty$. 
By Eq.\@ \eqref{eq:quantilreduktion_2},
  \begin{align*}
     & \sqrt{N}\left(F_{N,g_N(v)}^{-1}(u) - \Phi_{g_N(v)\{1-g_N(v)\}}^{-1}(u)\right)       \\
     & \quad = \frac{Q_N(u,g_N(v))}{\varphi\left(\Phi^{-1}(u)\right)} + \left(\frac{1}{\varphi\left(\Phi^{-1}(\xi_{N,u,v})\right)} - \frac{1}{\varphi\left(\Phi^{-1}(u)\right)}\right) Q_N(u,g_N(v)),
  \end{align*}
  for some $\xi_{N,u,v}$ between $u$ and $\tilde{F}_{N,g_N(v)}^{-1}(u)$. On $\Omega_N$ we have the inequality
  $$
    \frac{1}{\varphi \circ \Phi^{-1}(\xi_{N,u,v})} \leq \frac{1}{\varphi \circ \Phi^{-1}(u/2)} \leq \frac{K}{u} 
  $$
  with the constant $K := \frac{2}{\Phi^{-1}(3/4)} > 1$ for all $a_N \leq u \leq 1/2$, because $1/(\varphi \circ \Phi^{-1})$ is antitone on $(0, 1/2]$. The last inequality follows from Mills ratio. Thus, for any $1/2 > \varepsilon > a_N$, $0<\eta<1/4$
  \begin{align}
    \begin{split}
      \label{eq:int_bound_1}
      & \sup_{0 < v < 1}\left|\int_{a_N}^\varepsilon \left(\frac{1}{\varphi\left(\Phi^{-1}(\xi_{N,u,v})\right)} - \frac{1}{\varphi\left(\Phi^{-1}(u)\right)}\right) Q_N(u,g_N(v)) ~\mathrm{d}u \right|                       \\
      & \quad \leq \sup_{0 < v < 1}\int_{a_N}^\varepsilon \frac{2K\{u(1-u)\}^\eta}{\varphi \circ \Phi^{-1}(u/2)} \left|\{u(1-u)\}^{-\eta} Q_N(u,g_N(v))\right| ~\mathrm{d}u                                               \\
      & \quad \leq  \Big(\sup_{0 < v < 1}\sup_{a_N \leq u \leq 1/2} \left|\{u(1-u)\}^{-\eta} Q_N(u,g_N(v))\right| \Big) \cdot \int_0^\varepsilon \frac{2K\{u(1-u)\}^\eta}{\varphi \circ \Phi^{-1}(u/2)} ~\mathrm{d}u \\
      & \quad = A_N  \int_0^\varepsilon \frac{\{u(1-u)\}^\eta}{\varphi \circ \Phi^{-1}(u/2)} ~\mathrm{d}u
    \end{split}
  \end{align}
  where $A_N$ independent of $\eps$ satisfies $A_N = \mathcal{O}_\mathbb{P}(1)$ by Lemma~\ref{lem:approxQNQn'} and Lemma~\ref{lem:konvergenz_QN_prime}, and the integral tends to $0$ for $\varepsilon \to 0$. On the other hand, Eq.\@ \eqref{eq:3logN_bound} implies
  $$
    \sup_{0 < v < 1} \sup_{\varepsilon \leq u \leq 1/2} \left|\frac{1}{\varphi\left(\Phi^{-1}(\xi_{N,u,v})\right)} - \frac{1}{\varphi\left(\Phi^{-1}(u)\right)}\right| = o_\mathbb{P}(1)
  $$
  by the uniform continuity of $1/(\varphi \circ \Phi^{-1})$ on the compact set $[\varepsilon, 1/2]$. Therefore,
  \begin{align}
    \begin{split}
      \label{eq:int_bound_2}
      & \sup_{v \in (0,1)} \left|\int_\varepsilon^{1/2} \left(\frac{1}{\varphi\left(\Phi^{-1}(\xi_{N,u,v})\right)} - \frac{1}{\varphi\left(\Phi^{-1}(u)\right)}\right) Q_N(u,g_N(v)) ~\mathrm{d}u \right|                       \\
      & \quad \leq \mathcal{O}_\mathbb{P}(1) \sup_{v \in (0,1)} \left(\frac{1}{2} - \varepsilon\right)  \sup_{v \in (0,1)} \sup_{\varepsilon \leq u \leq 1/2} \left|\frac{1}{\varphi\left(\Phi^{-1}(\xi_{N,u,v})\right)} - \frac{1}{\varphi\left(\Phi^{-1}(u)\right)}\right| \\
      &\quad =: B_N(\eps) = o_\mathbb{P}(1).
    \end{split}
  \end{align}
  Eqs.\@ \eqref{eq:int_bound_1} and \eqref{eq:int_bound_2} combine to yield
  \begin{align*}
     & \sup_{v \in (0,1)} \sup_{a_N \leq t \leq 1/2}\left|\int_{a_N}^t \left(\frac{1}{\varphi\left(\Phi^{-1}(\xi_{N,u,v})\right)} - \frac{1}{\varphi\left(\Phi^{-1}(u)\right)}\right) Q_N(u,g_N(v)) ~\mathrm{d}u \right| \\
     & \leq A_N \int_0^\varepsilon \frac{2\{u(1-u)\}^\alpha}{\varphi \circ \Phi^{-1}(u/2)} + B_N(\eps) 
  \end{align*}
  for any $\varepsilon > 0$. Since the integral converges to $0$ for $\varepsilon \to 0$, and since $A_N$ is independent of $\eps$ and $B_N(\eps) = o_\mathbb{P}(1)$ for any fixed $\eps > 0$,  this implies
  $$
    \sup_{v \in (0,1)} \sup_{a_N \leq t \leq 1/2}\left|\int_{a_N}^t \left(\frac{1}{\varphi\left(\Phi^{-1}(\xi_{N,u,v})\right)} - \frac{1}{\varphi\left(\Phi^{-1}(u)\right)}\right) Q_N(u,g_N(v)) ~\mathrm{d}u \right| = o_\mathbb{P}(1).
  $$
  For $1/2 \leq t \leq 1 - a_N$, we can proceed in a similar manner.
\end{proof}

\end{appendix}
\end{document}